\let\hat\widehat
\let\tilde\widetilde
\crefname{equation}{}{}
\Crefname{equation}{}{}
\newcommand{\single}{\renewcommand{\baselinestretch}{1.2}\normalsize}
\newcommand{\double}{\renewcommand{\baselinestretch}{1.63}\normalsize}
\newtheorem{theorem}{Theorem}
\newtheorem{lemma}[theorem]{Lemma}
\newtheorem{corollary}[theorem]{Corollary}
\newtheorem{remark}{Remark}
\newtheorem{definition}{Definition}
\newtheorem{example}{Example}
\newtheorem{assumption}{Assumption}
\crefname{definition}{\textbf{definition}}{definitions}
\Crefname{definition}{Definition}{Definitions}
\crefname{assumption}{\textbf{assumption}}{assumptions}
\Crefname{assumption}{Assumption}{Assumptions}
\newcommand{\argmin}{\mathop{\mathrm{argmin}}}
\newenvironment{manassumption}[1]{%
  \manassumptioninner
}{\endmanassumptioninner}
\title{Bias-adjusted spectral clustering in multi-layer stochastic block models}
  \author{Jing Lei \\
    Department of Statistics and Data Science, Carnegie Mellon University, USA\\
    and \\
    Kevin Z. Lin \\
    Department of Statistics, Wharton School of Business, University of Pennsylvania, USA}
\begin{document}

%\pagenumbering{Alph}
\begin{titlepage}
\thispagestyle{empty}
\single
\maketitle

\begin{center}
  \textbf{Abstract}
\end{center}
%!TEX root = ./mat_quad_asa.tex
We consider the problem of estimating common community structures in multi-layer stochastic block models, where each single layer may not have sufficient signal strength to recover the full community structure.  In order to efficiently aggregate signal across different layers, we argue that the sum-of-squared adjacency matrices contain sufficient signal even when individual layers are very sparse. Our method uses a bias-removal step that is necessary when the squared noise matrices may overwhelm the signal in the very sparse regime.  The analysis of our method
relies on several novel tail probability bounds for matrix linear combinations with matrix-valued coefficients and matrix-valued quadratic forms, which may be of independent interest.  The performance of our method and the necessity of bias removal is demonstrated in synthetic data and in microarray analysis about gene co-expression networks.

\vspace*{.1in}

\noindent\textsc{Keywords}: {network data; community detection; stochastic block models; spectral clustering; matrix concentration inequalities; gene co-expression network}

% \thispagestyle{empty} \vfill
% \noindent \vspace{-.2cm}\rule{\textwidth}{0.5pt}\\
% {\small Jing Lei is Associate Professor, Department of Statistics and Data Science,
% Carnegie Mellon University.  Mailing address: 5000 Forbes Avenue, Carnegie Mellon University, Department of Statistics and Data Science, Pittsburgh, PA
% 15213 (email: jinglei@andrew.cmu.edu). Jing Lei's research is partially
% supported by NSF Grants DMS-1407771 and DMS-1553884.
% }
\end{titlepage}

%\pagenumbering{arabic} 
\setcounter{page}{2}
\double

%!TEX root = ./mat_quad_asa.tex
\section{Introduction} \label{sec:intro}
A network records the interactions among a collection of individuals, such as  gene co-expression, functional connectivity among brain regions, and friends on social media platforms.  In the simplest form, a network can be represented by a binary symmetric matrix $A\in\{0,1\}^{n\times n}$ where each row/column represents an individual and the $(i,j)$-entry of $A$ represents the presence/absence of interaction between the two individuals.  In the more general case, $A_{ij}$ may take values in $\mathbb R^1$ to represent different magnitudes or counts of the interaction.  We refer to \cite{Kolaczyk09}, \cite{Newman09}, and \cite{Goldenberg10} for general introduction of statistical analysis of network data.

In many applications, the interaction between individuals are recorded multiple times, resulting in multi-layer network data. For example, in this paper, we study
the temporal gene co-expression networks in the medial prefrontal cortex of rhesus monkeys at ten different developmental stages \citep{bakken2016comprehensive}.  The medial prefrontal cortex is believed to be related to developmental brain disorders, and many of the genes  we study 
%we study are significantly enriched for neural projection guidance and 
are suspected to be associated with autism spectrum disorder at different stages of development. 
Other examples of multi-layer network data are brain imaging, where we may infer one set of interactions among different brain regions from electroencephalography (EEG), and another set of interactions using resting-state functional magnetic resonance imaging (fMRI) measures.  % These networks obtained in different modality may share a common underlying structure related to functional groups of the brain regions, but each come with their own noises.  It would be natural to ask whether we can combine these sources of data to obtain more informative inference about the underlying common structure.
Similarly, one may expect the brain regions to form groups in terms of connectivity.
The wide applicability and rich structures of multi-layer networks make it an active research area in the statistics, machine learning, and signal processing community. See \cite{TangLD09}, \cite{DongFVN12}, \cite{KivelaABGMP14}, \cite{XuH14}, \cite{HanXA15}, \cite{ZhangC17}, \cite{MatiasM17} and references within.

In this paper, we study multi-layer network data through the lens of multi-layer stochastic block models, where we observe many simple networks on a common set of nodes. The stochastic block model (SBM) and its variants \citep{Holland83,BickelC09,KarrerN11,Airoldi08} are an important prototypical class of network models that allow us to mathematically describe the community structure and understand the performance of popular algorithms such as spectral clustering \citep{McSherry01,RoheCY11,Jin12,LeiR14} and other methods \citep{Latouche12,Peixoto13,AbbeS15}. Roughly speaking, in an SBM, the nodes in a network are partitioned into disjoint communities (i.e., clusters), and nodes in the same community have similar connectivity patterns with other nodes.  A key inference problem in the study of SBM is estimating the community memberships given an observed network.

Compared to an individual layer, a multi-layer network contains more data and hopefully enables us to extract salient structures, such as communities, more easily. On the other hand, new methods must be developed in order to efficiently combine the signal from individual layers.
To demonstrate the necessity for these methods, we plot the observed gene co-expression networks collected from \cite{bakken2016comprehensive} in \Cref{fig:postred}.
The three networks correspond to gene co-expression patterns within the medial prefrontal cortex tissue of rhesus monkeys collected at different stages of development.
We plot only the sub-network formed by a small collection genes for simplicity.
A quick visual inspection across the three networks suggests that the genes can be approximately divided into four common communities (i.e., clusters that persist throughout all three networks), where genes in the same community exhibit similar connectivity patterns. 
%In \Cref{fig:postred}, each individual layer exhibit some community structure.
%There are roughly four groups of genes such that the genes in the same group have coherent co-expression patterns. 
%The separation of these four groups are indicated by the axis ticks in \Cref{fig:postred}. 
%But such a group partition is not obvious in each single layer. 
However, different gene communities are more visually apparent in different layers.
For example, in the layer labeled as ``E40'' (for tissue collected 40 days of development in the embryo), the last three communities are indistinguishable.
In contrast, in the layer labeled as ``E90,'' the first community is less distinguishable,
and in the layer labeled ``48M'' (for the tissue collected 48 months after birth),
nearly all of the communities are indistinguishable.
These qualitative observations are of scientific interest since these time-dependent densely-connected communities are evidence of ``gene coordination,''  a biological concept that describes when a community of genes
is synchronized in ramping up or down in gene expression at certain stages of
development \citep{paul2012developmental,werling2020whole}.
Hence, we can infer two potential advantages of analyzing such multi-layer network data in an aggregated manner. First, an aggregated analysis is able to reveal global structures that are not exhibited by any individual layer. Second, the common structure across different layers can help us to better filter out the noise, which allows us to obtain more accurate inference results. We describe the analysis in more detail and return to analyze the full dataset in \Cref{sec:data}.
% \begin{enumerate}[itemsep=-2pt,topsep=3pt]
%   \item Such 
%   \item 
% \end{enumerate}

% \begin{figure}[t]
% \begin{center}
%           \includegraphics[width=0.19\textwidth]{fig/networksosd/postnatalredplot1}
%           \includegraphics[width=0.19\textwidth]{fig/networksosd/postnatalredplot2}
%             \includegraphics[width=0.19\textwidth]{fig/networksosd/postnatalredplot3}
%           \includegraphics[width=0.19\textwidth]{fig/networksosd/postnatalredplot4}
%           \includegraphics[width=0.181\textwidth]{fig/networksosd/postnatalredplot5-eps-converted-to-1}
%    \end{center}
%    \caption{\small The connectivity matrices for each time period of the postnatal gene co-expression, with genes ordered by the estimated clusters. Tick marks denote the boundaries between the clusters.}
%   \label{fig:postred}
%\end{figure}

 \begin{figure}[t]
 \begin{center}
           \includegraphics[width=\textwidth]{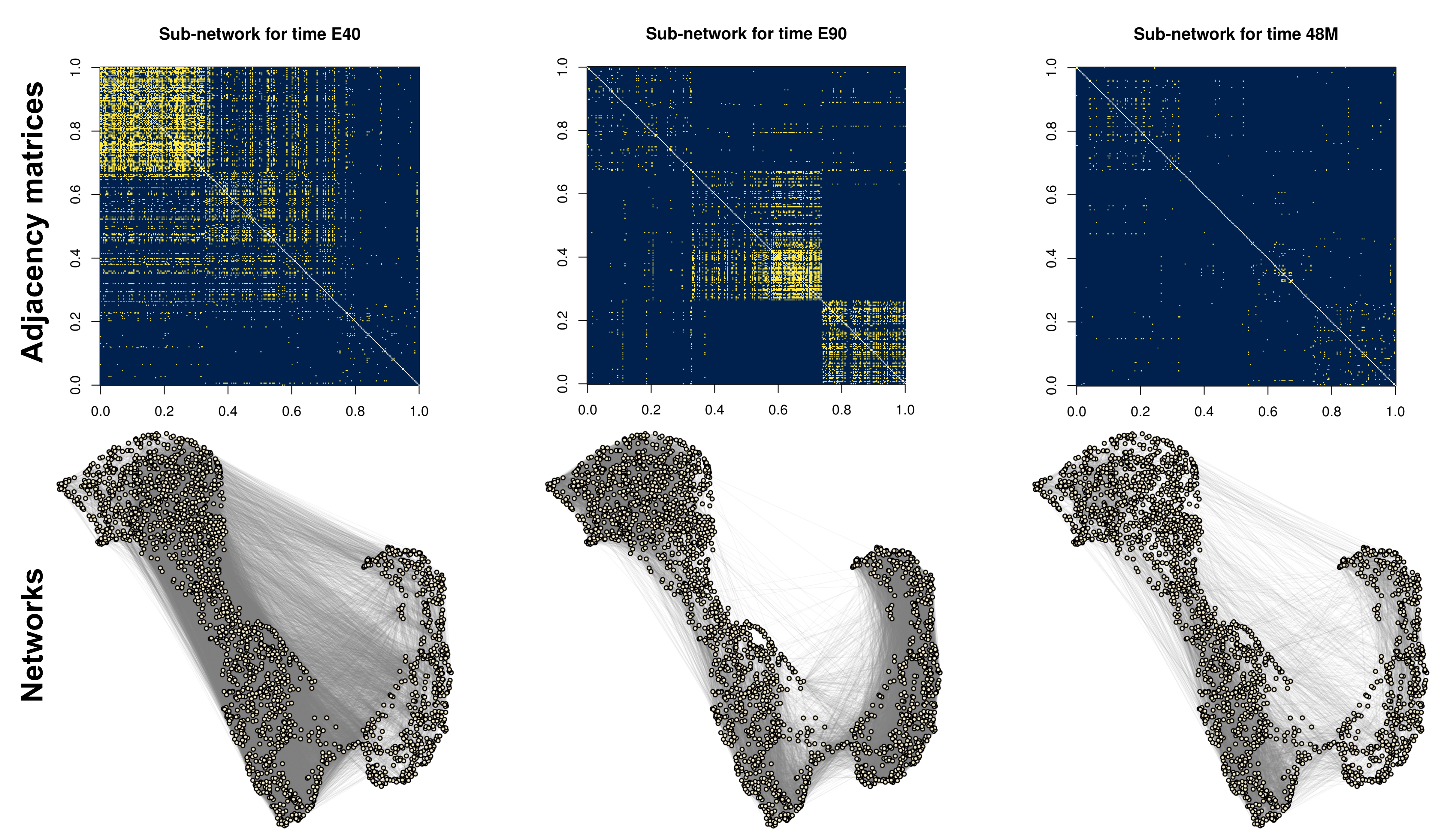}
    \end{center}
    \vspace{-2em}
    \caption{\small The adjacency matrix (top row, yellow denoting the presence of an edge and blue denoting the lack of) and the corresponding network (bottom row)  for three different developmental times of the rhesus monkey's gene co-expression in
    the medial prefrontal cortex based on selected set of genes to visually demonstrate the varying network structures. Likewise, the ordering of the genes in the adjacency matrices is chosen to visually demonstrate the clustering structure, and persist throughout all three adjacency matrices. The three developmental times are E40, E90 (for 40 or 90 days in the embryo) and 48M (for 48 months after birth), corresponding to the pair of plots on the left to the pair of plots on the right. The full dataset is analyzed in \Cref{sec:data}.}
   \label{fig:postred}
\end{figure}

The theoretical understanding of estimating common communities in multi-layer SBMs is relatively limited compared to those in single-layer SBMs. \cite{Sharmo18} and \cite{PaulC17} studied variants of spectral clustering for multi-layer SBMs, but the strong theoretical guarantee requires a so-called layer-wise positivity assumption, meaning each matrix encoding the probability of an edge among the communities must have only positive eigenvalues bounded away from zero.  In contrast, \cite{PenskyZ19} studied a different variant of spectral clustering, but established estimation consistency under conditions similar to those for single-layer SBMs. These results only partially describe the benefits of multi-layer network aggregation. Alternatively, \cite{LeiCL19} considered a least-squares estimator, and proved consistency of the global optima for general block structures without imposing the positivity assumption for individual layers, but that method is computationally intractable in the worst case.

The first main contribution of this paper is a simple, novel, and computationally-efficient aggregated spectral clustering method for multi-layer SBMs, described in \Cref{sec:sbm}.  The estimator applies spectral clustering to the sum of squared adjacency matrices after removing the bias by setting the diagonal entries to $0$. In addition to its simplicity, this estimator has two appealing features.  First, summing over the squared adjacency matrices enables us to prove its consistency without requiring a layer-wise positivity assumption.  Second, compared with single-layer SBMs, the consistency result reflects a boost of signal strength by a factor of $L^{1/2}$, where $L$ is the number of layers.  Such a $L^{1/2}$ signal boost is comparable to that obtained in \cite{LeiCL19}, but is now achieved by a simple and computationally tractable algorithm. 
The removal of the diagonal bias in the squared matrices is shown to be crucial in both theory (\Cref{sec:consistency}) and simulations (\Cref{sec:simu}), especially in the most interesting regime where the network density is too low for any single layer to carry sufficient signal for community estimation.  Interestingly, similar diagonal-removal techniques have also been discovered and studied in other contexts, such as Gaussian mixture model clustering \citep{ndaoud2018sharp}, principal components analysis \citep{zhang2018heteroskedastic}, and centered distance matrices \citep{szekely2014partial}. 

Another contribution of this paper is a collection of concentration inequalities for matrix-valued linear combinations and quadratic forms. 
These are described in \Cref{sec:concentration}, which are an important ingredient for the aforementioned theoretical results.
% With the fast development of modern data acquisition technology, it is more common to have data sets with matrices as basic measurement units.  One example is network data, where the data consists of a matrix recording the interaction among a set of individuals \citep{Kolaczyk09,Newman09,Goldenberg10}.  Another example is brain imaging, where the data matrix measures the spatiotemporal signal of brain activities \citep{Lindquist08}.  In many applications, it is possible to have multiple realizations of such data matrices, such as networks measured at different time points \citep{XuH14,KivelaABGMP14}, and brain imaging measured with repetition under potentially different tasks and/or for different subjects \citep{van2013wu}.
Specifically, an important step in analyzing our matrix-valued data is to understand the behavior of the matrix-valued measurement errors.  Towards this end, many powerful concentration inequalities have been obtained for matrix operator norms under various settings, such as random matrix theory \citep{BaiS10}, eigenvalue perturbation and concentration theory \citep{FeigeO05,ORourke18,LeiR14,LeLV17,CapeTP17}, and matrix deviation inequalities \citep{BandeiraV16,Vershynin11}.  The matrix Bernstein inequality and related results \citep{Tropp12} are also applicable to linear combinations of noise matrices with scalar coefficients.
In order to provide technical tools for our multi-layer network analysis, we extend 
these matrix-valued concentration inequalities in two directions.  First, we provide upper bounds for linear combinations of noise matrices with matrix-valued coefficients. This can be viewed as an extension of the matrix Bernstein inequality to allow for matrix-valued coefficients.  Second, we provide concentration inequalities for sums of matrix-valued quadratic forms, extending the scalar case known as the Hanson--Wright inequality \citep{HansonW71,RudelsonV13} in several directions. A key intermediate step in relating linear cases to quadratic cases is deriving a deviation bound for matrix-valued $U$-statistics of order two.

\section{Community Estimation in Multi-Layer SBM} \label{sec:sbm}
Throughout this section, we describe the model, theoretical motivation, and our estimator for clustering nodes in a multi-layer SBM.
Motivated by such multi-layer network data with a common community structure as demonstrated in \Cref{fig:postred}, we consider the $L$-layer SBM containing $n$ nodes assigned to $K$ different communities,
\begin{equation}\label{eq:gen_mlsbm}
A_{\ell,ij}\sim {\rm Bernoulli}(\rho B_{\ell,\theta_i \theta_j})\quad \text{for}\quad 1\le i<j\le n,\quad 1\le\ell\le L\,,
\end{equation}
where $\ell$ is the layer index, $\theta_i\in\{1,...,K\}$ is the membership index of node $i$ for $i\in \{1,...,n\}$, $\rho \in (0,1]$ is an overall edge density parameter, and $B_{\ell}\in[0,1]^{K\times K}$ is a symmetric matrix of community-wise edge probabilities in layer $\ell$. We assume $A_\ell$ is symmetric and $A_{\ell,ii}=0$ for all $\ell \in \{1,...,L\}$ and $i \in \{1,...,n\}$.

Our statistical problem is to estimate the membership vector $\theta = (\theta_1,...,\theta_n) \in \{1,...,K\}^n$ given the observed adjacency matrices $A_1,...,A_L$.  Let $\hat\theta\in\{1,...,K\}^n$ be an estimated membership vector, and the estimation error is the number of mis-clustered nodes based on the Hamming distance,
\begin{equation} \label{eq:hamming}
d(\hat\theta,\theta)=\min_{\pi} \sum_{i=1}^n \mathbbm{1}(\theta_i\neq \pi(\hat\theta_i))\,,
\end{equation}
for the indicator function $\mathbbm{1}(\cdot)$,
where the minimum is taken over all label permutations $\pi:\{1,...,K\}\mapsto\{1,...,K\}$. An estimator $\hat\theta$ is consistent if $n^{-1}d(\hat\theta,\theta)=o_P(1)$.

The assumption of a fixed common membership vector $\theta$ can be relaxed to each layer having its own membership vector but close to a common one. The theoretical consequence of this relaxation is discussed in \Cref{rem:varying_membership}, after the main theorem in \Cref{sec:consistency}.
We assume that $K$ is known.  The problem of selecting $K$ from the data is an important problem and will not be pursued in this paper. Further discussion will be given in \Cref{sec:disc}.

When $L=1$, the community estimation problem for single-layer SBM is well-understood \citep{BickelC09,LeiR14,Abbe17}.  If $K$ is fixed as a constant while $n\rightarrow\infty$, $\rho\rightarrow 0$ with balanced community sizes lower bounded by a constant fraction of $n$, and $B$ is a constant matrix with distinct rows, then the community memberships can be estimated with vanishing error when $n\rho\rightarrow\infty$.  Practical estimators include variants of spectral clustering, message passing, and likelihood-based estimators.

As mentioned in \Cref{sec:intro}, in the multi-layer case, consistent community estimation has been studied in some recent works.  The theoretical focus is to understand how the number of layers $L$ affects the estimation problem.  \cite{PaulC17} and \cite{Sharmo18} show that consistency can be achieved if $L n \rho$ diverges, but under the aforementioned positivity assumption, meaning that each $B_{\ell}$ is positive definite with a minimum eigenvalue bounded away from zero. Such assumptions are plausible in networks with strong associativity patterns where nodes in the same communities are much more likely to connect to one another than nodes in different communities.  But there are networks observed in practice that do not satisfy this assumption, such as 
%the example in \Cref{fig:postred}, and 
those in  \cite{newman2002assortative} and \cite{litvak2013uncovering}. See \cite{lei2018network} and the references within for additional discussion on such positivity assumptions in a more general context. 
% Such a layer-wise positivity assumption enables simple estimators based on spectral clustering of $\sum_\ell A_\ell$, but is not plausible in examples as in \Cref{fig:postred} where some layers may have eigenvalues that are zero or negative.
To remove the positivity assumption, \cite{LeiCL19} considered a least-squares estimator, and proved consistency when $L^{1/2} n\rho$ diverges (up to a small poly-logarithmic factor) and the smallest eigenvalue of $\sum_\ell B_\ell^2$ grows linearly in $L$.  A caveat is that the least-squares estimator is computationally challenging, and in practice, one may only be able to find a local minimum using greedy algorithms. 

In the following subsections, we will motivate a spectral clustering method from the least-squares perspective, investigate its bias, and derive our estimator with a data-driven bias adjustment.

\subsection{From least squares to spectral clustering}
In this subsection, we motivate how least-squares estimators is well-approximated by spectral clustering, which lays down the intuition of our estimator in \Cref{sec:method}.
Let $\psi \in \{1,...,K\}^n$ be a membership vector and $\Psi=[\Psi_1,...,\Psi_K]$ be the corresponding $n\times K$ membership matrix where each $\Psi_k=(\Psi_{1,k},...,\Psi_{n,k})^T$ is an $n\times 1$ vector with $\Psi_{i,k}=\mathbbm{1}(\psi_i=k)$.  Let $I_k(\psi)=\{i \in \{1,...,n\}:\psi_i=k\}$ and $n_k(\psi)=|I_k(\psi)|$,
the size of the set $I_k(\psi)$.

The least-squares estimator of \cite{LeiCL19} seeks to minimize the residual sum of squares,
\begin{align}\label{eq:LS}
  \hat \theta = \argmin_{\psi \in \{1,...,K\}^n} \sum_{\ell=1}^L \sum_{1\le i < j\le n}(A_{\ell,ij}-\hat B_{\ell,\psi_i \psi_j}(\psi))^2
\end{align}
where
$$
\hat B_{\ell,kl}(\psi)=\left\{\begin{array}{cc}
  \frac{\sum_{i,j\in I_k(\psi)} A_{\ell,ij}}{n_k(\psi)(n_k(\psi)-1)} &\text{when}\quad k=l,\\
  \frac{\sum_{i\in I_k(\psi),j\in I_l(\psi)} A_{\ell,ij}}{n_k(\psi)n_l(\psi)} &\text{when}\quad k\neq l,
\end{array}\right.
$$
is the sample mean estimate of $B_\ell$ under a given membership vector $\psi$. 
% Using the total variance decomposition, the optimization problem is equivalent to
% \begin{align*}
%   \max_h \sum_{m=1}^M \left[\sum_{1\le k\le K} \frac{n_k(h)(n_k(h)-1)}{2}\hat B_{m,k,k}^2(h)+\sum_{1\le k< l\le K}n_k(h)n_l(h)\hat B_{m,k,l}^2(h)\right]\,.
% \end{align*}
Recall that the total-variance decomposition implies the equivalence between minimizing within-block sum of squares and maximizing between-block sum of squares.
Hence, if we accept the approximation $n_k(\psi)(n_k(\psi)-1)\approx n_k^2(\psi)$, then 
after multiplying the least-squares objective function \eqref{eq:LS} by $2$ and using the total-variance decomposition, the objective function becomes
  \begin{align*}
    \max_{\psi \in \{1,...,K\}^n}\sum_{\ell=1}^L \sum_{1\le k,l\le K}\frac{(\Psi_k^T A_\ell \Psi_l)^2}{n_k(\psi)n_l(\psi)}\,,
  \end{align*}
% It's too early to completely forget the trouble caused by the diagonal entries in $A_m$.  Even with this simplification, the issue is still implicitly in $\hat B_{m,k,k}$.
%
% With the same approximation used (in a different manner, this time in the denominator) we can rewrite the objective function in a different equivalent form
% \begin{align*}
%   \max_h \sum_{m=1}^M \sum_{1\le k,l\le K} \left(\frac{H_k^T}{\sqrt{n_k(h)}} A_{m} \frac{H_l}{\sqrt{n_l(h)}}\right)^2
% \end{align*}
% The above becomes
which is equivalent to
 \begin{align*}
  \max_{\psi \in \{1,...,K\}^n} \sum_{\ell=1}^L \sum_{1\le k,l\le K} \left(\tilde \Psi_k^T A_{\ell} \tilde \Psi_l\right)^2=\max_\psi \sum_{\ell=1}^L \left\|\tilde \Psi^T A_{\ell} \tilde \Psi\right\|_F^2\,,
\end{align*}
where $\|\cdot\|_F$ denotes the matrix Frobenius norm, and $\tilde \Psi=[\tilde \Psi_1,...,\tilde \Psi_K]$ with $\tilde \Psi_k=\Psi_k/\sqrt{n_k(\psi)}$ is the column-normalized version of $\Psi$ where each column of $\tilde \Psi$ has norm $1$.
This means $\tilde \Psi$ is orthonormal, i.e., $\tilde \Psi^T\tilde \Psi=I_K$. 
The benefit of considering orthonormal matrices is that
for any orthonormal matrix $U\in\mathbb R^{n\times K}$ and symmetric matrix $A \in \mathbb{R}^{n\times n}$,
$$
\|U^T A U\|_F^2 = {\rm tr}(U^T A UU^T A U) \le {\rm tr}(U^T A^2 U)\,.
$$
The right-hand side of the above inequality is maximized by the leading $K$ eigenvectors of $A$, where the eigenvalues ordered by absolute value. For this $U$, the inequality becomes equality.  Additionally, under the multi-layer SBM, the expected values of adjacency matrices $\{P_1, ..., P_L\}$ (where $P_\ell=\mathbb E A_\ell$ for $\ell \in \{1,...,L\}$) share roughly the same leading principal subspace as determined by the common community structure. Putting all these facts together, we intuitively expect $U=\tilde \Theta$ to correspond to an approximate solution of the original least-squares problem, where $\tilde\Theta$ is the column-normalized version of the true membership matrix $\Theta$.

Therefore, a relaxation of the approximate version of the original problem \eqref{eq:LS}  is
\begin{align} 
  \max_{U\in \mathbb R^{n\times K}: U^TU=I_K} {\rm tr} \left[U^T\left(\sum_{\ell=1}^L A_\ell^2\right)U\right]\,,\label{eq:SSS_bias}
\end{align}
which is a standard spectral problem. For this reason, we often call $U$ the ``spectral embedding.'' The community estimation is then obtained by applying a clustering algorithm to the rows of $\hat U$, a solution to \eqref{eq:SSS_bias}.

\subsection{The necessity of bias adjustment} \label{sec:necessity}
Let $P_\ell=\mathbb E A_\ell$ denote the expected adjacency matrix, meaning that $P_\ell$ is the matrix obtained by zeroing out the diagonal entries of $\tilde P_\ell = \rho\Theta B_\ell \Theta^T$. We now show that $\sum_{\ell}A^2_{\ell}$ is a biased estimate of $\sum_{\ell}P^2_{\ell}$, and that we can correct for this bias by simply removing its diagonal entries.  Let $X_\ell=A_\ell-P_\ell$ be the noise matrix.
Then
\begin{align}
  \sum_{\ell=1}^{L} A_\ell^2 = \Big(\sum_{\ell=1}^{L}  P_\ell^2\Big) + \Big(\sum_{\ell=1}^{L} (X_\ell P_\ell+ P_\ell X_\ell)\Big)+S\,,\label{eq:decomp}
\end{align}
where $S=\sum_\ell X_\ell^2$.
The first term is the signal term, with each summand close to $\tilde P_\ell^2=\rho^2\Theta B_\ell^2\Theta^T$, and will add up over the layers, because each matrix $B_\ell^2$ is positive semi-definite. The second term is a mean-$0$ noise matrix, which can be controlled using matrix concentration inequalities developed in \Cref{sec:concentration} below. The third term $S=\sum_\ell X_\ell^2$ is a squared error matrix and will also add up over the layers, which may introduce bias if the overall edge density parameter $\rho$ is too small.

\begin{figure}[t]
  \begin{center}
    \includegraphics[width=300px]{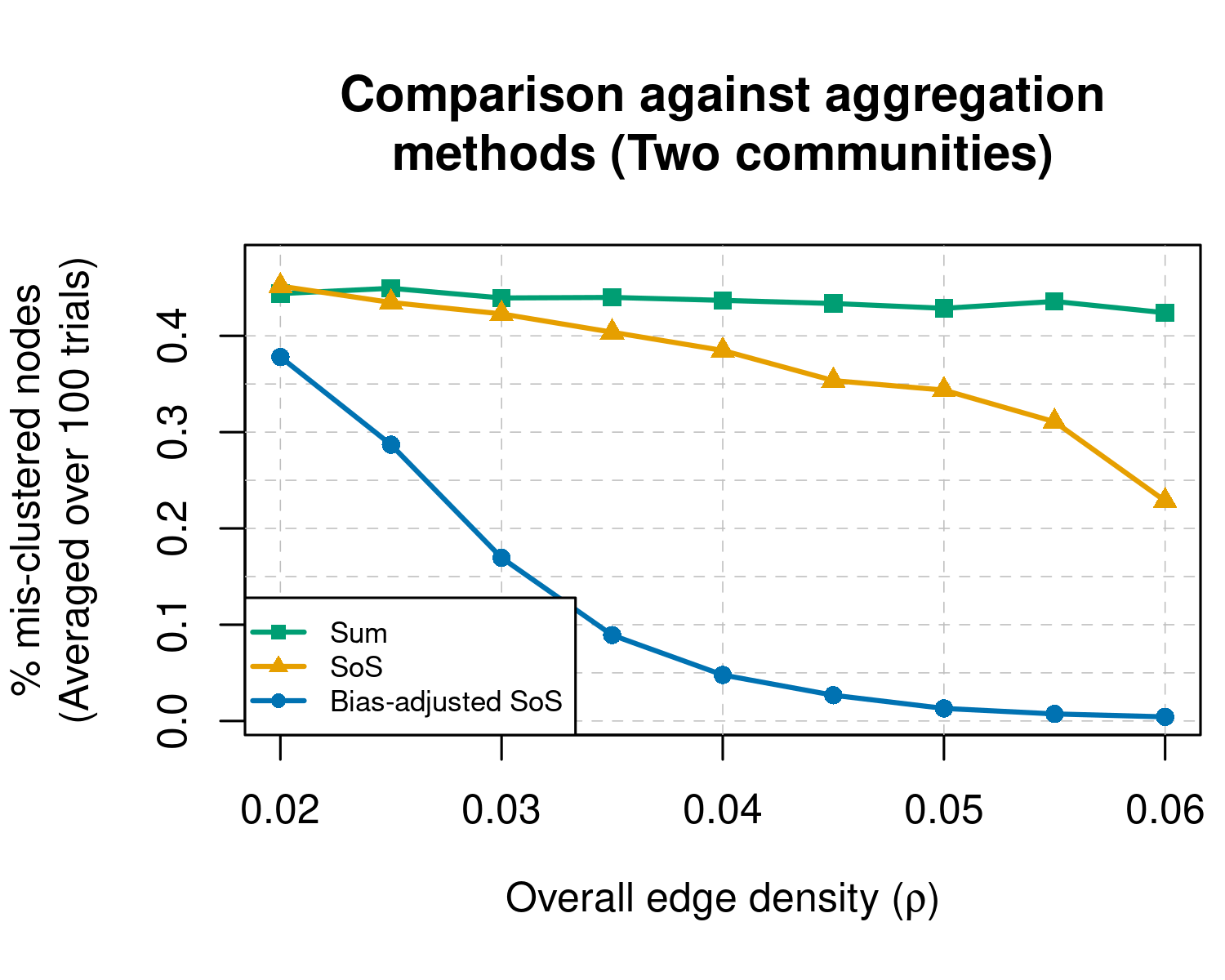}
    \vspace{-2em}
    \caption{\small The average proportion of mis-clustered nodes for three methods (measured via Hamming distance $n^{-1}d(\hat\theta,\theta)$ shown in \eqref{eq:hamming}, averaged over 100 trials), with $n=200$ and two equal-sized communities among overall edge densites ranging from $\rho \in [0.02,0.06]$ and $L=30$ layers. Three methods' performance are shown: Sum (green squares), SoS (orange triangles), and Bias-adjusted SoS (blue circles).} \label{fig:simu}
  \end{center}
\end{figure}

We use a simple simulation study to illustrate the necessity of bias adjustment in spectral clustering applied to the sum of squared adjacency matrices.  We set $K=2$ and consider two edge-probability matrices,
$$
B^{(1)}=\left[\begin{array}{cc}
%  \frac{3}{4} & \frac{\sqrt{3}}{8} \\ \frac{\sqrt{3}}{8} & \frac{1}{2}
  3/4 & \sqrt{3}/8 \\ \sqrt{3}/8 & 1/2
\end{array}\right]\,,\quad\text{and}\quad
B^{(2)}=\left[\begin{array}{cc}
%  \frac{7}{8} & \frac{3\sqrt{3}}{8} \\ \frac{3\sqrt{3}}{8} & \frac{1}{8}
  7/8 & 3\sqrt{3}/8 \\ 3\sqrt{3}/8 & 1/8
\end{array}\right]\,.
$$
These two matrices are chosen such that spectral clustering applied to the sum of the adjacency matrices and the sum of squared adjacency matrices would be either sub-optimal or inconsistent in the very sparse regime. We set $n=200$ nodes with $100$ nodes in each community, the number of layers to be $L=30$, and for each layer $\ell$, $B_{\ell}$ is randomly and independently chosen from $B^{(1)}$ and $B^{(2)}$ with equal probability. We use five different values of the overall edge density parameter $\rho$ between $0.02$ and $0.06$. For each value of $\rho$, we generate a multi-layer SBM according to \eqref{eq:gen_mlsbm} and apply spectral clustering to three matrices: (1) the sum of adjacency matrices without squaring (i.e., ``Sum''), (2) the sum of squared adjacency matrices (i.e., ``SoS''), and (3) a bias-adjusted sum of squared adjacency matrices (i.e., ``Bias-adjusted SoS''), which will be introduced in the next subsection.
The results across 100 trials are  reported in \Cref{fig:simu}. 
 %The plotted numbers are average proportion of mis-clustered nodes. 
By construction, the ``Sum'' method  performs poorly since the sum of adjacency matrices has only one significant eigen-component, meaning the result is sensitive to noise when $K=2$ eigenvectors are used for spectral clustering. %Specifically, When we use two eigenvectors, the performance is rather poor as reported in  
In fact, as described in \Cref{exa:1} below, it is also easy to generate cases in which the sum of adjacency matrices carries no signal at all.
The ``SoS'' method also performs poorly. This is because although the sum of squared adjacency matrices contains signal for clustering, the aforementioned bias is large when $\rho$ is small.  In contrast, our method 
%bias-adjusted sum-of-squared adjacency matrix denoted as 
``Bias-adjusted SoS'' performs the best.  A more detailed simulation study is presented in \Cref{sec:simu}.

\subsection{Bias-adjusted sum-of-squared spectral clustering} \label{sec:method}
We are now ready to quantify the amount of bias, and to describe our aforementioned bias-adjusted sum-of-squared method to cluster nodes in a multi-layer SBM.
From \eqref{eq:decomp}, we see that the diagonal entries of the squared error term $S$ have positive expected value and hence may cause systematic bias in the principal subspace of $\sum_\ell A_\ell^2$.  Now consider a further decomposition $S=S_1+S_2$ where $S_1$ and $S_2$ correspond to the off-diagonal and diagonal parts of $S$, respectively.  
Observe that only the diagonal entries of $S$ have positive expected value,
%As we will show in \Cref{sec:concentration}, $S_2$ contributes more bias than $S_1$, 
so our effort will focus on removing the bias caused by $S_2$. 
Towards this end, observe that by construction, we have
\begin{align}
(S_2)_{ii}=&S_{ii}=\sum_{\ell=1}^{L}\sum_{j=1}^{n} X_{\ell,ij}^2  \nonumber\\
=& \sum_{\ell=1}^{L}  \sum_{j=1}^{n} P_{\ell,ij}^2\mathbbm{1}(A_{\ell,ij}=0)  + (1-P_{\ell,ij})^2\mathbbm{1}(A_{\ell,ij}=1)\nonumber\\
\le & Ln \max_{\ell,ij}P_{\ell,ij}^2 + \sum_{\ell=1}^{L} d_{\ell,i}\,\label{eq:S2_diag_decomp}
\end{align}
where $d_{\ell,i}=\sum_{j}A_{\ell,ij}$ is the degree of node $i$ in layer $\ell$.  The expected value of $\sum_\ell d_{\ell,i}$ is $\sum_{\ell,j}P_{\ell,ij}\asymp Ln \max_{\ell,ij}P_{\ell,ij}$. In the very sparse regime, $\max_{\ell,ij}P_{\ell,ij}$ is very small so $\sum_\ell d_{\ell,i}$ is the leading term in $(S_2)_{ii}$.  

Combining this calculation with a key observation that $\sum_\ell d_{\ell,i}$ can be computed from the data, we arrive at the following bias-adjusted sum-of-squared  spectral clustering algorithm.
Let $D_\ell$ be the diagonal matrix consisting of the degrees of $A_{\ell}$ where $(D_\ell)_{ii}=d_{\ell,i}$. The bias-adjusted sum of squared adjacency matrices is 
\begin{equation}\label{eq:S0}
S_0=\sum_{\ell=1}^{L}(A_\ell^2- D_\ell)\,.
\end{equation}
%where each summand has 0's along the diagonal.
% Note that this is equivalent to computing $\sum_{\ell}A_\ell^2$ and then setting all the diagonal entries of the resulting matrix to be 0.
The community membership is estimated by applying a clustering algorithm to the rows of the matrix whose columns are the leading $K$ eigenvectors of $S_0$ given in \eqref{eq:S0}.

\section{Consistency of bias-adjusted sum-of-squared spectral clustering} \label{sec:consistency}
We now describe our theoretical result characterizing how multi-layer networks benefit community estimation.
The hardness of community estimation is determined by many aspects of the problem, including number of communities, community sizes, number of nodes, separation of communities, and overall edge density.  Here, we need to consider all of these aspects jointly across the $L$ layers.  To simplify the discussion, we primarily focus on the following setting but discuss additional settings in later remarks.
\begin{assumption}\label{ass:signal}
\begin{enumerate}
  \item [(a)] The number of communities $K$ is fixed and community sizes are balanced.
  That is, there exists  a constant $c$ such that each community size is in $[c^{-1}n/K,cn/K]$.
  \item [(b)] The relative community separation is constant.  That is, $B_\ell=\rho B_{\ell,0}$ where $B_{\ell,0}$ is a $K\times K$ symmetric matrix with constant entries in $[0,1]$. 
  Furthermore, the minimum eigenvalue of $\sum_\ell B_{\ell,0}^2$ is at least $cL$ for some constant $c>0$.
\end{enumerate}
\end{assumption}
Part (a) simplifies the effect of the community sizes and the number of communities. This setting has been well-studied in the SBM literature for $L=1$ \citep{LeiR14}.  Part (b) puts the focus on the effect of the overall edge density parameter $\rho$, and requires a linear growth of the aggregated squared edge-probability matrices in terms of the minimum eigenvalue. This is much less restrictive than the layer-wise positivity assumption used in other work mentioned in \Cref{sec:sbm} which require each $B_{\ell,0}$ to be positive definite.   We give two examples in which \Cref{ass:signal}(b) is satisfied but the layer-wise positivity is not.

\begin{example}[Identicially distributed random layers]\label{exa:1}
    Consider a theoretical scenario in which the $B_{\ell,0}$'s have i.i.d. $\text{Uniform}(0,1)$ entries subject to symmetry.  It is easy to verify that the expected sum matrix $\mathbb E \sum_\ell B_\ell$ is a constant matrix with each entry being $L\rho/2$.  Therefore it is impossible to reconstruct the block structure from the sum of adjacency matrices $\sum_\ell A_\ell$ when $\rho$ is small.
\end{example}

\begin{example}[Community merge and split]
  Consider a more realistic scenario in which for $\{B_\ell:1\le\ell\le L\}$,
  some layers $\ell$ and community indices $k,k'$ have $B_{\ell,kj}=B_{\ell,k'j}$ for all $j$.  This can be interpreted as the merge of communities $k$ and $k'$ at layer $\ell$.  In such cases, each layer may not contain full community information, and we must aggregate the layers to recover the full community structure.  In our real data example, we actually observe that in most layers,  all but one or two communities merge with a large, null community, and each non-null community is active in one or two layers.
\end{example}

Based on these assumptions, in the asymptotic regime $n\rightarrow\infty$ and $\rho\rightarrow 0$, it is well-known that consistent community estimation is possible for $L=1$ when $n\rho\rightarrow\infty$. Hence, in the multi-layer setting when $L\rightarrow\infty$, one should expect a lower requirement on overall density as we aggregate information across layers. This is shown in our following result.

% [LCL19] considered the least squares estimator (computationally hard) and showed a (infeasible) global optimum of the least squares problem can be consistent as long as $\rho_n\gg M^{-1/2}$.  Other authors [PC18,B??19] show that the requirement can be further dropped to $1/M$ under an additional positivity assumption on $P_m$.

% Our goal is to establish a $1/M$ improvement without assuming positivity of $P_m$, by taking advantage of the squared sum of the signal across layers. We will focus on the case $M\lesssim n^2$ since if $M\gg n^2$ then $\rho\asymp M^{-1}$ means that the edge probability is much less than $n^{-2}$ and most of the layers will be completely empty (no edges at all).

% To sum up, we will focus on asymptotic effects of diverging $n$, diverging $M$ with $M\lesssim n^2$, vanishing $\rho_n$ such that $M^{-1}\ll \rho_n \ll 1$.
% \subsubsection{The signal part}
% For each $m$, $\|P_m-\tilde P_m\|\lesssim \rho_n$, and $\|P_m\|\lesssim n\rho_n$. So

\begin{theorem}\label{thm:consistency}
  Under \Cref{ass:signal}, if $L^{1/2}n\rho\ge C_1\log^{1/2}(L+n)$ and $n\rho\le C_2$ for a large enough positive constant $C_1$ and a positive constant $C_2$, then spectral clustering with a constant factor approximate K-means clustering algorithm applied to $S_0$, the bias-adjusted sum of squared adjacency matrices  in \eqref{eq:S0},
  correctly estimates the membership of all but a
  $$
  C\left(\frac{1}{n^2}+\frac{\log(L+n)}{Ln^2\rho^2}\right)
  $$
proportion of nodes for some constant $C$ with probability at least $1-O((L+n)^{-1})$.
\end{theorem}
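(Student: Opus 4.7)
The plan is to cast the result as a matrix perturbation statement for spectral clustering and then follow the standard pipeline: (i) identify a clean population target, (ii) control the noise in operator norm, (iii) invoke Davis--Kahan, and (iv) convert the eigenvector error into a misclustering bound via the $K$-means step. Starting from \eqref{eq:decomp}, write
\[
S_0 \;=\; \underbrace{\sum_{\ell=1}^{L} P_\ell^2}_{=:M}\;+\;\underbrace{\sum_{\ell=1}^{L}\bigl(P_\ell X_\ell+X_\ell P_\ell\bigr)}_{=:R_1}\;+\;\underbrace{\sum_{\ell=1}^{L}\bigl(X_\ell^2-D_\ell\bigr)}_{=:R_2}.
\]
Up to a diagonal perturbation of operator norm $O(Ln\rho^2)$, $M$ equals the rank-$K$ matrix $\rho^2\Theta(\sum_\ell B_{\ell,0}^2)\Theta^T$. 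Under \Cref{ass:signal}, balanced community sizes give $\Theta^T\Theta\asymp (n/K)I_K$, and the linear-in-$L$ lower bound on $\lambda_{\min}(\sum_\ell B_{\ell,0}^2)$ then yields $\lambda_K(M)\gtrsim Ln^2\rho^2/K$. The corresponding eigenvector matrix $U\in\real^{n\times K}$ has, up to lower-order fluctuations, exactly $K$ distinct rows (one per community) separated by $\Omega(\sqrt{K/n})$.

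The technical heart of the proof is to show that with probability at least $1-O((L+n)^{-1})$,
\[
\|R_1\|_{op}+\|R_2\|_{op}\;\lesssim\; L^{1/2}\,n\rho\,\sqrt{\log(L+n)}.
\]
For $R_1$, the summands $P_\ell X_\ell+X_\ell P_\ell$ are independent, mean-zero, symmetric matrices carrying the matrix-valued ``coefficients'' $P_\ell$ of operator norm $O(n\rho)$; this is precisely the setting of the matrix-coefficient Bernstein-type inequality announced in \Cref{sec:concentration}, which I would apply directly. For $R_2$, the bias adjustment via $D_\ell$ is what makes the bound possible: without subtracting $D_\ell$, the diagonal $\mathbb{E}\sum_\ell(X_\ell^2)_{ii}\asymp Ln\rho$ dominates the signal scale $Ln^2\rho^2$ once $n\rho\lesssim 1$; after subtraction, the diagonal of $R_2$ collapses, as in \eqref{eq:S2_diag_decomp}, to a mean-zero residual on the same scale as the off-diagonal, and the off-diagonal of $\sum_\ell X_\ell^2$ is a sum of mean-zero matrix-valued quadratic forms, bounded by the Hanson--Wright-type matrix inequality of \Cref{sec:concentration}.

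With these two deviation bounds in hand, the Davis--Kahan theorem yields an orthogonal $O\in\real^{K\times K}$ with
\[
\|\hat U O-U\|_F^{2}\;\lesssim\;\frac{K\,\bigl(\|R_1\|_{op}+\|R_2\|_{op}\bigr)^{2}}{\lambda_K(M)^{2}}\;\lesssim\;\frac{\log(L+n)}{L n^{2}\rho^{2}},
\]
where $\hat U$ is the top-$K$ eigenvector matrix of $S_0$. Following the analysis of \cite{LeiR14}, the $\Omega(\sqrt{K/n})$ separation between distinct rows of $U$, combined with the constant-factor approximate $K$-means guarantee, implies a misclustering count bounded by a constant multiple of $\|\hat U O-U\|_F^{2}\cdot n/K$, plus an additive $O(K)$ slack accounting for constant-factor $K$-means errors that persist even in the noiseless case. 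Dividing by $n$ gives the claimed rate $1/n+\log(L+n)/(Ln^{2}\rho^{2})$.

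The main obstacle I anticipate is the control of $R_2$. The per-entry variance of the off-diagonal of $\sum_\ell X_\ell^2$ is $\asymp Ln\rho^{2}$, so the target operator-norm bound $\sqrt{L}\,n\rho\sqrt{\log}$ is exactly the Hanson--Wright-scale estimate and leaves no slack; obtaining the correct $L^{1/2}$ (rather than $L$) aggregation across layers is precisely what demands the novel matrix quadratic-form inequality of \Cref{sec:concentration}. A related subtlety is that $D_\ell$ is data-dependent, so one must verify that $\sum_\ell(X_\ell^2-D_\ell)$ can be recast exactly (up to a small deterministic or mean-zero remainder) as a centered matrix-valued quadratic form to which the Section~\ref{sec:concentration} inequalities apply. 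Verifying the variance proxies in the sparse regime $n\rho\le C_2$ and carefully tracking logarithmic factors through both $R_1$ and $R_2$ will be the most delicate book-keeping in the proof.
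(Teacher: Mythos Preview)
Your outline matches the paper's proof: the same signal/noise decomposition, \Cref{lem:mat_prod} for the cross term $R_1$, Davis--Kahan, and the $K$-means lemma (\Cref{lem:k-means}). A few small corrections: the correction taking $\sum_\ell P_\ell^2$ to the rank-$K$ block target is not purely diagonal, and the target is $\rho^2\tilde\Theta\Lambda\bigl(\sum_\ell B_{\ell,0}\Lambda^2 B_{\ell,0}\bigr)\Lambda\tilde\Theta^T$ rather than $\rho^2\Theta(\sum_\ell B_{\ell,0}^2)\Theta^T$; the diagonal of $R_2$ after subtracting $\sum_\ell D_\ell$ is not mean-zero but is bounded deterministically by $O(Ln\rho^2)$ via \eqref{eq:S2_diag_decomp}; and the $1/n$ in the rate comes from these $O(Ln\rho^2)$ deterministic errors (divided by the signal $Ln^2\rho^2$), not from a $K$-means slack --- in the noiseless case a $(1+\epsilon)$-approximate $K$-means recovers the clusters exactly since the optimal cost is zero.

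The one real gap is in your plan for the off-diagonal of $R_2$. Your variance heuristic (``per-entry variance $\asymp Ln\rho^2$'') points to the general matrix Hanson--Wright bound, \Cref{thm:quad_sum}. But as the paper spells out in \Cref{sec:sparse_bernoulli}, applying \Cref{thm:quad_sum} with $G_\ell=I_n$ in the sparse-Bernoulli setting gives, ignoring logarithms, $\|S_1\|\lesssim L^{1/2}n\rho+(Ln\rho)^{1/2}$, and the second term dominates whenever $n\rho\le 1$. That would weaken the consistency requirement from $L^{1/2}n\rho\to\infty$ to roughly $L^{1/3}n\rho\to\infty$ and miss the stated rate. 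So the difficulty is not ``$L^{1/2}$ versus $L$'' as you phrase it --- \Cref{thm:quad_sum} already delivers the $L^{1/2}$ in its leading term --- but the parasitic $(Ln\rho)^{1/2}$ coming from the crude step $\bigl\|\sum_\ell H_\ell^TH_\ell\bigr\|\le\sum_\ell\|H_\ell\|^2$ inside the decoupling proof. The paper's actual argument (\Cref{thm:s1_sparse_bernoulli}) avoids this by writing $\tilde X_\ell=\tilde A_\ell-P_\ell$ after decoupling, handling $\sum_\ell X_\ell P_\ell$ via \Cref{lem:mat_prod}, and then controlling $\bigl\|\sum_\ell\tilde A_\ell^2\bigr\|$ directly using nonnegativity and the Perron--Frobenius theorem (\Cref{lem:counting}). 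This Bernoulli-specific refinement is the missing ingredient in your plan.
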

An immediate consequence of \Cref{thm:consistency} is the Hamming distance consistency of the bias-adjusted sum-of-squared spectral clustering, provided that $L^{1/2}n\rho/\log^{1/2} (L+n)\rightarrow\infty$. This demonstrates the boost of signal strength by a factor of $L^{1/2}$ made possibly due to aggregating layers (up to a poly-logarithmic factor) that we alluded to in \Cref{sec:intro}.

The proof of \Cref{thm:consistency} is given in \Cref{app:consistency}, where the main effort is to establish sharp operator norm bounds for the linear noise term $\sum_\ell X_\ell P_\ell$ and the quadratic noise term $\sum_\ell (X_\ell^2- D_\ell)$.  A refined operator norm bound for the off-diagonal part of $\sum_\ell (X_\ell^2- D_\ell)$ plays an important role (\Cref{thm:s1_sparse_bernoulli}).  Once the operator norm bound is established, the clustering consistency follows from a standard analysis of the K-means algorithm (\Cref{lem:k-means}).  These concentration inequalities indeed hold for more general classes of matrices, and we provide a systematic development in the next section.

\Cref{thm:consistency} is stated in a simple form for brevity.  It can be generalized in several directions to better suit practical scenarios with more careful bookkeeping in the proof.  We describe some important extensions in the remarks below, where $\|\cdot \|$ denotes the operator norm (i.e., largest singular value).

\begin{remark}[Varying membership across layers]\label{rem:varying_membership}
    \Cref{thm:consistency} can be extended to accommodate varying membership across the layers. In particular, assume that the $\ell$th layer has membership matrix $\Psi_\ell \in \{0,1\}^{n\times K}$, such that each $\Psi_\ell$ is close to a common membership matrix $\Psi  \in \{0,1\}^{n\times K}$,
    \begin{equation}\label{eq:varying_membership_condition}
\|\Psi_\ell-\Psi\|\le \epsilon_\ell \sqrt{n}\,,
    \end{equation}
    for some positive constant $\epsilon_\ell$.
Then we have the following generalization of \Cref{thm:consistency}.
\end{remark}
\begin{corollary}[Consistency under varying membership]\label{cor:varying_membership}
Assume the multilayer adjacency matrices $A_1,\ldots,A_L$ are generated from individual membership matrices $\Psi_1,\ldots,\Psi_L$ satisfying \eqref{eq:varying_membership_condition} for some sequence $\epsilon_1,\ldots,\epsilon_L$ and common membership matrix $\Psi$.  Under the same condition as in \Cref{thm:consistency}, if in addition $\bar\epsilon:= L^{-1}\sum_\ell\epsilon_\ell \le C_3$ for some positive constant $C_3$, then the error bound of the bias-adjusted sum of squared spectral clustering is no more than $$C\left(\frac{1}{n^2}+\bar\epsilon^2+\frac{\log(L+n)}{Ln^2\rho^2}\right)$$
with high probability.
\end{corollary}

\begin{remark}[Other regimes of network density]\label{rem:n_rho>1}
     The condition $L^{1/2}n\rho \geq C_1 \log^{1/2}(L+n)$ is required in order for the error bound in \Cref{thm:consistency} to imply consistency, and is suitable for the linear squared signal accumulation assumed in Part (b) of \Cref{ass:signal}.  If we assume a different growth speed of the minimum eigenvalue of $\sum_{\ell}B^2_{\ell,0}$, this requirement needs to be changed accordingly.
Second, the condition $n\rho\lesssim 1$ is used for notational simplicity.  The regime $n\rho\gg 1$ would allow for consistent community recovery even when $L=1$. For multilayer models, if $n\rho \geq C_2$ for some constant $C_2$, the error bound in \Cref{thm:consistency} becomes $$
C\left(\frac{1}{n^2}+\frac{\log(L+n)}{Ln\rho}\right)\,.
$$
for some constant $C$ with high probability.
Detailed explanations of this claim are given in \Cref{app:consistency}.
\end{remark}

\begin{remark}[More general conditions on community sizes]\label{rem:K_n_min}
Let $n_{\min}=\min_{1\le k\le K}\|\Psi_{\cdot k}\|_1$ be the size of the smallest community, and denote $\alpha=n_{\min}/n$.
Our analysis can also allow the number of communities, $K$, and $\alpha$ to change with other model parameters $(n,L, \rho)$.  In particular, the lower bound of the signal term in \eqref{eq:decomp} will be multiplied by $\alpha$ since the operator norm of $\Psi$ is proportional to $\alpha$.  All the matrix concentration results, such as \Cref{thm:s1_sparse_bernoulli} and \Cref{lem:counting} still hold as they do not rely on any block structures.  Therefore under the same setting as \Cref{thm:consistency}, if we allow $K$ and $\alpha$ to vary with $(n,L,\rho)$, but have $\alpha L^{1/2}n\rho \ge C_1\log^{1/2}(L+n)$ for some constant $C_1$, then with high probability,\Cref{thm:consistency} holds with error bound
$$
C K\alpha^{-2}\left(\frac{1}{n^2} +\frac{\log(L+n)}{L n^2\rho^2}\right)\,.
$$
\end{remark}

% We then specialize our analysis  for sparse Bernoulli matrices in \Cref{sec:sparse_bernoulli},  which is the most relevant part for proving \Cref{thm:consistency}.

\section{Matrix Concentration Inequalities} \label{sec:concentration}
We generically consider a sequence of independent matrices $X_1,...,X_L\in \mathbb R^{n\times r}$ with independent mean-0 entries. The goal is to provide upper bounds for operator norms %, denoted by $\|\cdot\|$, 
of linear combinations of the form $\sum_\ell X_\ell 
H_\ell$ with $H_\ell \in \mathbb R^{r\times m}$ for $\ell \in \{1,...,L\}$, and quadratic 
forms $\sum_\ell X_\ell G_\ell X_\ell^T$ with $G_\ell\in \mathbb R^{r\times 
r}$ for $\ell \in \{1,...,L\}$. Here, $H_\ell$ and $G_\ell$ are non-random.  To connect with the notations in previous sections, let $H_\ell=P_\ell$, then an operator norm bound of $\sum_\ell X_\ell P_\ell$ will help control the second term in \eqref{eq:decomp}.  Let $G_\ell=I_r$ be the $r \times r$ identity matrix, then $\sum_\ell X_\ell G_\ell X_\ell^T$ corresponds to the third term in \eqref{eq:decomp}.  Our general results cover both the symmetric and asymmetric cases, as well as more general entries of $X_\ell$ beyond the Bernoulli case.

Concentration inequalities usually require tail conditions on the entries of $X_\ell$.  A standard tail condition for scalar random variables is the Bernstein tail condition.
\begin{definition}\label{def:bern}
  We say a random variable $Y$ satisfies a $(v,R)$-Bernstein tail condition (or is $(v,R)$-Bernstein), if $\mathbb E[|Y|^k]\le \frac{v}{2}k! R^{k-2}$ for all integers $k\ge 2$.
\end{definition}

% \begin{lemma}\label{lem:bernstein}
%   If $Y^2$ is $(v,R)$-Bernstein, then
%   (i) $Y$ is $(\sqrt{v},\sqrt{R})$-Bernstein, and
%   (ii) $|Y^2-\mathbb E (Y^2)|$ is $(4v,2(R+\sqrt{v}))$-Bernstein.
% \end{lemma}
%
% \begin{proof}
%   For all $k\ge 2$
%   \begin{align*}
%   \mathbb E|Y|^k\le & \left[\mathbb E |Y|^{2k}\right]^{1/2}\le\left[\frac{v}{2}R^{k-2}k!\right]^{1/2}= \sqrt{\frac{v}{2}}\sqrt{R}^{k-2}(k!)^{1/2}\le \frac{\sqrt{v}}{2}\sqrt{R}^{k-2}k!\,.
%   \end{align*}
%  This proves part (i).
%
%  For part (ii), $\mathbb E Y^2\le \sqrt{\mathbb E Y^4}\le \sqrt{v}$. So
%   \begin{align*}
%     \mathbb E |Y^2-\mathbb E Y^2|^k\le &  \mathbb E 2^{k-1}\left[Y^{2k}+(\mathbb EY^2)^k\right]\le 2^{k-1}\frac{v_2}{2} R^{k-2} k!+2^{k-1}v^{k/2}\\
%     \le & \frac{4v}{2}[2(R+ \sqrt{v})]^{k-2} k!\qedhere
%   \end{align*}
% \end{proof}

The Bernstein tail condition leads to concentration inequalities for sums of independent random variables \citep[][Chapter 2]{vanderVaartAndWellner}.  
Since we are interested not only in linear combinations of $X_\ell$'s, but also the quadratic forms involving $X_\ell G_\ell X_\ell^T$, we need the Bernstein condition to hold for the squared entries of $X_1,...,X_L$.  Specifically we consider the following three assumptions.

\begin{assumption}\label{ass:bern1}
  Each entry $X_{\ell,ij}$ is $(v_1,R_1)$-Bernstein, for all $\ell \in \{1,...,L\}$ and $i,j \in \{1,...,n\}$.
  %$1\le\ell\le L$, $1\le i,j\le n$.
\end{assumption}

\begin{assumption}\label{ass:bern2}
  Each squared entry $X_{\ell,ij}^2$ is $(v_2,R_2)$-Bernstein, for all $\ell \in \{1,...,L\}$ and $i,j \in \{1,...,n\}$.
\end{assumption}

\begin{manassumption}{\ref*{ass:bern2}'}\label{ass:bern2p}
  The product $X_{\ell,ij}\tilde X_{\ell,ij}$ is $(v_2',R_2')$-Bernstein, for all $\ell \in \{1,...,L\}$ and $i,j \in \{1,...,n\}$, where $\tilde X_\ell$ is an independent copy of $X_\ell$.
\end{manassumption}

There are two typical scenarios in which such a squared Bernstein condition in \Cref{ass:bern2} holds. The first is the sub-Gaussian case: If a random variable $Y$ satisfies the sub-Gaussian condition $\mathbb E e^{Y^2/\sigma^2}\le 2$ for some $\sigma> 0$, then we have $\mathbb E Y^{2k}\le 2\sigma^4 (\sigma^2)^{k-2}k!$, and hence $Y^2$ is $(4\sigma^4,\sigma^2)$-Bernstein.  The second scenario is centered Bernoulli: If a random variable $Y$ satisfies $\mathbb P(Y=1-p)=1-\mathbb P(Y=-p)=p$ for some $p\in [0,1/2]$, then we have $\mathbb EY^{2k}=p(1-p)^{2k}+(1-p)p^{2k}\le p$, and hence $Y^2$ is $(2p,1)$-Bernstein.  Our proof will also use the fact that if $Y^2$ is $(v_2,R_2)$-Bernstein, then the centered version $Y^2-\mathbb E (Y^2)$ is also $(v_2,R_2)$-Bernstein \citep[Lemma 3]{wang2016average}.

We require \Cref{ass:bern2p} in order to use a decoupling technique in establishing concentration of quadratic forms.  One can show that if \Cref{ass:bern2} holds then \Cref{ass:bern2p} holds with $(v_2',R_2')=(v_2,R_2)$. However, when $X_{\ell,ij}$'s are centered Bernoulli random variables with parameters bounded by $p\le 1/2$, then \Cref{ass:bern2p} holds with $v_2'=2p^2$ and $R_2'=1$, while \Cref{ass:bern2} holds with $v_2=2p$ and $R_2=1$, so that $v_2'$ can potentially be much smaller than $v_2$.  We will explicitly keep track of the Bernstein parameters in our results for the sake of generality.

\subsection{Linear combinations with matrix coefficients}

\begin{theorem}\label{lem:mat_prod}
  Let $X_1,...,X_L$ be a sequence of independent $n\times r$ matrices with mean-$0$ independent entries satisfying \Cref{ass:bern1}, and $H_\ell$ be any sequence of $r\times m$ non-random matrices. Then for all $t>0$,
  \begin{align}
     &\mathbb P\left[\left\|\sum_{\ell=1}^L X_\ell H_\ell\right\|\ge t\right]\nonumber\\
    \le & 2(m+n)\exp\left(-\frac{t^2/2}{v_1\left(n\left\|\sum_{\ell}H_\ell^T H_\ell\right\|\vee \sum_\ell \|H_\ell\|_F^2\right)+R_1\max_\ell\|H_\ell\|_{2,\infty}t}\right)\,.\label{eq:mat_prod}
  \end{align}%
%   As a consequence, there exists an absolute constant $C$ such that with probability at least $1-(m+n)^{-1}$ we have
% \begin{equation}\label{eq:sum_prod}
%   \left\|\sum_{\ell=1}^L X_\ell H_\ell\right\|\le C\left[\sqrt{v_1n\log n}\left\|\sum_{\ell}H_\ell^T H_\ell\right\|^{1/2}+R_1\max_\ell\|H_\ell\|_{2,\infty}\log n\right]\,.
%   \end{equation}
A similar result holds, with $t^2/2$ replaced by $t^2/8$ and $2(m+n)$ replaced by $4(m+n)$ in \eqref{eq:mat_prod}, for symmetric $X_\ell$'s of size $n\times n$ with independent $(v_1,R_1)$-Bernstein diagonal and upper-diagonal entries and $H_\ell$ of size $n\times m$.
\end{theorem}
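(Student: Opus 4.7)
I would deduce the bound from a matrix Bernstein inequality applied to a rank-one decomposition of the target sum. Write
\begin{equation*}
\sum_{\ell=1}^L X_\ell H_\ell \;=\; \sum_{\ell=1}^L\sum_{i=1}^n\sum_{j=1}^r Z_{\ell,i,j},\qquad Z_{\ell,i,j}:=X_{\ell,ij}\,e_i h_{\ell,j}^{\mathrm T},
\end{equation*}
where $e_i$ is the $i$th standard basis vector of $\real^n$ and $h_{\ell,j}\in\real^m$ is the $j$th row of $H_\ell$ written as a column. The $Z_{\ell,i,j}$ are independent zero-mean $n\times m$ matrices, and since $\|Z_{\ell,i,j}\|=|X_{\ell,ij}|\,\|h_{\ell,j}\|$, \Cref{ass:bern1} yields that $\|Z_{\ell,i,j}\|$ is $(v_1\|h_{\ell,j}\|^2,R_1\|h_{\ell,j}\|)$-Bernstein, with uniform sub-exponential parameter at most $R_1\max_\ell\|H_\ell\|_{2,\infty}$.

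Next I would compute the two matrix variance proxies. Since $Z_{\ell,i,j}Z_{\ell,i,j}^{\mathrm T}=X_{\ell,ij}^2\,\|h_{\ell,j}\|^2\,e_ie_i^{\mathrm T}$ is diagonal,
\begin{equation*}
\Big\|\sum_{\ell,i,j}\mean[Z_{\ell,i,j}Z_{\ell,i,j}^{\mathrm T}]\Big\|\;\le\;\max_i v_1\sum_{\ell,j}\|h_{\ell,j}\|^2 \;=\; v_1\sum_\ell\|H_\ell\|_F^2.
\end{equation*}
Analogously, using $Z_{\ell,i,j}^{\mathrm T}Z_{\ell,i,j}=X_{\ell,ij}^2\,h_{\ell,j}h_{\ell,j}^{\mathrm T}$ and summing over $i$ contributes a factor of at most $n v_1$, giving
\begin{equation*}
\Big\|\sum_{\ell,i,j}\mean[Z_{\ell,i,j}^{\mathrm T}Z_{\ell,i,j}]\Big\|\;\le\; n\,v_1\,\Big\|\sum_\ell H_\ell^{\mathrm T}H_\ell\Big\|.
\end{equation*}
The maximum of these two quantities is exactly the variance $\sigma^2$ appearing in the denominator of \cref{eq:mat_prod}.

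I would then invoke a Bernstein-type matrix concentration inequality for sums of independent rectangular matrices whose spectral norms satisfy a $(v,R)$-Bernstein tail condition. The standard route is via Hermitian dilation to a $(m+n)\times(m+n)$ self-adjoint matrix, together with the classical translation from the Bernstein moment condition to a matrix-MGF bound of the form $\mean\exp(\theta Z_{\ell,i,j})\preceq\exp\!\big(\tfrac{\theta^2/2}{1-\theta R}\,V_{\ell,i,j}\big)$ for $|\theta|<1/R$, with $V_{\ell,i,j}$ a suitable PSD surrogate. Combining this with the two variance bounds and optimizing over $\theta$ produces the tail in the quoted form, with the prefactor $2(m+n)$ coming from $m+n$ dilation dimensions and the two signs of the extreme eigenvalue.

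For the symmetric case, entries above and below the diagonal of $X_\ell$ are coupled, so I would re-group the atoms as $Y_{\ell,i,j}:=X_{\ell,ij}\big(e_ih_{\ell,j}^{\mathrm T}+\indc(i\neq j)\,e_jh_{\ell,i}^{\mathrm T}\big)$ for $i\le j$. These are still independent and mean-zero; the extra rank-one piece roughly doubles both the uniform bound on $\|Y_{\ell,i,j}\|$ and each variance proxy, which, after the same dilation argument over $(n+m)\times(n+m)$ Hermitian matrices, produces the factor of $4$ in the dimension term and the replacement of $t^2/2$ by $t^2/8$. The main technical obstacle is the rigorous upgrade of matrix Bernstein from the bounded to the sub-exponential setting, which requires carefully controlling the matrix moment generating function at each rank-one atom using only the scalar Bernstein moment hypothesis and the rank-one structure; once this MGF bound is in hand the rest of the argument is the standard Laplace-transform/union-bound pipeline.
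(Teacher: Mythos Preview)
Your plan for the asymmetric case is essentially the paper's proof: the same rank-one decomposition $\sum_{\ell,i,j}X_{\ell,ij}e_i h_{\ell,j}^{\mathrm T}$, the same two variance computations, and dilation plus matrix Bernstein. The ``technical obstacle'' you flag---lifting the scalar $(v_1,R_1)$-Bernstein moment growth to a matrix moment/MGF bound---is handled in the paper by an explicit rank-one dilation lemma that bounds $\bigl|\mathbb E[\mathcal D(X_{\ell,ij}e_ih_{\ell,j}^{\mathrm T})]^k\bigr|$ by a block-diagonal PSD matrix for every $k\ge 2$; summing over $\ell,i,j$ gives exactly your two variance proxies in the two diagonal blocks, and this feeds directly into Tropp's moment-based master inequality (his Theorem~6.2). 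So the piece you left as the main obstacle is precisely the one short lemma the paper supplies.

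For the symmetric case your grouping into rank-two atoms $Y_{\ell,i,j}=X_{\ell,ij}(e_ih_{\ell,j}^{\mathrm T}+e_jh_{\ell,i}^{\mathrm T})$ for $i<j$ is valid, but the paper takes a cleaner route: it simply writes $X_\ell=X_\ell^{(u)}+X_\ell^{(l)}$ as upper-triangular plus strictly-lower-triangular, applies the already-proved asymmetric bound to each piece at threshold $t/2$, and union-bounds. This immediately yields the $4(m+n)$ prefactor and the $t^2/8$ exponent without having to recompute variance proxies for rank-two atoms or track the cross terms $h_{\ell,i}^{\mathrm T}h_{\ell,j}$ that appear in $Y_{\ell,i,j}Y_{\ell,i,j}^{\mathrm T}$. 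Your approach would also work, but the constant bookkeeping is heavier and your heuristic ``roughly doubles'' is a bit loose (e.g.\ $\|e_ih_{\ell,j}^{\mathrm T}+e_jh_{\ell,i}^{\mathrm T}\|\le\sqrt{2}\max_j\|h_{\ell,j}\|$, not $2\max_j\|h_{\ell,j}\|$).
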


The proof of \Cref{lem:mat_prod}, given in \Cref{app:1}, combines the matrix Bernstein inequality \citep{Tropp12} for symmetric matrices and a rank-one symmetric dilation trick (\Cref{lem:rank1_dil}) to take care of the asymmetry in $X_\ell H_\ell$.

% Sanity check 1: $X$ has iid standard Gaussian entries, $H=I_n$.  The bound gives $\sqrt{n\log n}$. A $\sqrt{\log n}$ factor worse than standard RMT.
%
% Sanity check 2: $X$ has iid standard Gaussian entires, $H=(1,...,1)^T$. Then $XH$ is an $n\times 1$ vector with independent $N(0,n)$ entries and $\|XH\|^2$ is $n$ times $\chi^2(n)$, with expected value $n^2$.  The bound gives $n\sqrt{\log n}$, also by a factor of $\sqrt{\log n}$.

% \textbf{Why is this useful?}
% Let $X$ be an $n\times 1$ vector with iid standard Gaussian entries.  Let $G$ be $n\times n$ with all entries being $1/n$.
%
% Using spectral norm bound, $\|X\|=O_P(\sqrt{n})$ and $\|G\|=1$ so $\|X G X^T\| =O_P (n)$.
%
% By standard calculation, $XGX^T=\left[(X_1+...+X_n)/\sqrt{n}\right]^2=O_P(1)$.
%
% The individual spectral norm bound is loose because the eigenvector of $X$ is unlikely to align with that of $G$.

% Sanity check 3: $X$ is $1\times m$ with iid standard normal entries, and $G$ is $m\times m$ with all entries being $1/m$.  Then $\|X\|\asymp \sqrt{m}$ and $\|G\|=1$.  But $\|GX\|=\|[(X_1+...+X_m)/\sqrt{m}]\times (1/\sqrt{m},...,1/\sqrt{m})^T\|=|(X_1+...+X_m)/\sqrt{m}|=O_P(1)$.  The above theorem gives $\sqrt{\log m}$ which is much closer to the optimal bound.

\begin{remark}
  If $n=m=r=1$, then \Cref{lem:mat_prod} recovers the well-known Bernstein's inequality as a special case with a different pre-factor.

  If $n\ge \min\{m,Lr\}$, then $n\|\sum_\ell H_\ell^T H_\ell\|\ge \sum_\ell \|H_\ell\|_F^2$ and the probability upper bound in \Cref{lem:mat_prod} reduces to
  \begin{align}
    \mathbb P\left[\left\|\sum_{\ell=1}^L X_\ell H_\ell\right\|\ge t\right]\le 2(m+n)\exp\left(-\frac{t^2/2}{v_1 n\left\|\sum_{\ell}H_\ell^T H_\ell\right\|+R_1\max_\ell\|H_\ell\|_{2,\infty}t}\right)\,.\label{eq:sum_prod_large_n}
  \end{align}

  If $n=1$ then $n\|\sum_\ell H_\ell^T H_\ell\|\le \sum_\ell \|H_\ell\|_F^2$
  and the probability bound reduces to
  \begin{align}
    \mathbb P\left[\left\|\sum_{\ell=1}^L X_\ell H_\ell\right\|\ge t\right]\le 2(m+n)\exp\left(-\frac{t^2/2}{v_1 \sum_{\ell}\left\| H_\ell\right\|_F^2+R_1\max_\ell\|H_\ell\|_{2,\infty}t}\right)\,.\label{eq:sum_prod_small_n}
  \end{align}
\end{remark}

\begin{remark}
  When $L=1$, the setting is similar to that considered in \cite{Vershynin11}.
  In the constant variance case (e.g., sub-Gaussian), $v_1^{1/2}\asymp R_1\asymp 1$, \Cref{lem:mat_prod} implies a high probability upper bound of $C\sqrt{\log(m+n)}(\sqrt{n}\|H\|+\|H\|_F)$, which agrees with Theorem 1.1 of \cite{Vershynin11}.  The extra $\sqrt{\log(n+m)}$ factor in our bound is because  our result is a tail probability bound while \cite{Vershynin11} provides upper bounds on the expected value.  However, in the sparse Bernoulli setting, where $v_1\ll R_1=1$, the upper bound in \Cref{lem:mat_prod} is better because it correctly captures the $\sqrt{v_1}$ factor multiplied by $\sqrt{n}\|H\|+\|H\|_F$, whereas the result in \cite{Vershynin11} leads to $v_1^{1/4}(\sqrt{n}\|H\|+\|H\|_F)$.  
% Also related is the result in \cite{BandeiraV16}, where $H=I_n$. Our rate has an extra $\sqrt{\log n}$ factor.  Such a $\sqrt{\log n}$ factor is the price we pay for the generality of allowing for sums of products and heavier tails such as sparse Bernoulli random variables.  For sub-Gaussian case, the dependence on $\log(n+m)$ can be slightly improved.
\end{remark}

\subsection{Matrix $U$-statistics and quadratic forms}
Let
\begin{align}
S= &\sum_{\ell=1}^{L} X_\ell G_\ell X_\ell^T= \sum_{\ell=1}^{L}\sum_{(i,j),(i',j')} X_{\ell,ij}X_{\ell,i'j'}e_i e_{i'}^T G_{\ell,jj'}\label{eq:total_decomp_S}% \left(e_i e_{i'}^T G_{jj'}+e_je_{j'}^TG_{ii'}+e_ie_{j'}^TG_{ji'}+e_je_{i'}^TG_{ij'}\right)\\
%&+\sum_{1\le i,i'\le n} X_{ii}X_{i'i'}(e_i e_i^T G_{ii}+e_{i'}e_{i'}G_{i'i'})\,.
\end{align}
where the summation is taken over all pairs $(i,j),(i',j')\in \{1,...,n\}^2$ and $e_i$ is the canonical basis vector in $\mathbb R^n$ with a 1 in the $i$th coordinate. In this subsection, we will focus on the symmetric case because the bookkeeping is harder compared to the asymmetric case.  The treatment for the asymmetric case is similar and the corresponding results are stated separately in \Cref{sec:asymmetric} for completeness.
% Taking expected value,
% \begin{align*}
% \mathbb E S=&\sum_\ell\sum_{1\le i < j\le n} \
% \sigma_{\ell,ij}^2\left(e_ie_i^TG_{\ell,jj}+e_je_j^T G_{\ell,ii}+e_ie_j^TG_{\ell,ji}+e_je_i^T G_{\ell,ij}\right)\\
% &+\sum_\ell\sum_{1\le i\le n}\sigma_{\ell,ii}^2 e_i e_i^T G_{\ell,ii}\,,
% \end{align*}
% where $\sigma_{\ell,ij}^2=\mathbb E X_{\ell,ij}^2$.

Because $X_\ell$ has centered and independent diagonal and upper diagonal entries, a term in \eqref{eq:total_decomp_S} has non-zero expected value only if $(i,j)=(i',j')$ or $(i,j)=(j',i')$ since this would imply $X_{\ell,ij}X_{\ell,i'j'}=X_{\ell,ij}^2$.
This motivates the following decomposition of $S$ into a quadratic component with non-zero entry-wise mean value
\begin{align}
S_2=&\Big[\sum_{\ell=1}^{L}\sum_{1\le i < j\le n} X_{\ell,ij}^2\left(e_ie_i^TG_{\ell,jj}+e_je_j^T G_{\ell,ii}+e_ie_j^TG_{\ell,ji}+e_je_i^T G_{\ell,ij}\right)\Big]\nonumber\\
&+\Big[\sum_{\ell=1}^{L}\sum_{1\le i\le n}X_{\ell,ii}^2 e_i e_i^T G_{\ell,ii}\Big]\,,\label{eq:S2}
\end{align}
 and a cross-term component with entry-wise mean-0 value
\begin{equation}
S_1= S-S_2\,. \label{eq:S1}
\end{equation}

It is easy to check that $\mathbb E S_2=\mathbb E S$ and $\mathbb E S_1=0$.  Intuitively, the spectral norm of $S_1$ should be small since it is the sum of many random terms with zero mean and small correlation, which can be viewed as a $U$-statistic with a centered kernel function of order two.  This $U$-statistic perspective is  a key component of the analysis and will be made clearer in the proof.  
%More general $U$-statistics are discussed briefly in \Cref{sec:disc}.  
For a similar reason, $S_2-\mathbb E S_2$ should also be small.  Hence, the main contributing term in $S$ should be the deterministic term $\mathbb E S_2$.
% This is the part in $S$ with non-zero expectation,
%
% Thus by Gershgorin circle theorem
% $$
% \|S_2-\mathbb E S_2\|\le \max_{1\le i\le n}\left(\left|\sum_\ell\sum_{1\le j\le n} (X_{\ell,ij}^2-\sigma_{\ell,ij}^2)G_{\ell,jj}\right|+\sum_{j\neq i} \left|\sum_\ell(X_{\ell,ij}^2 -\sigma_{\ell,ij}^2)G_{\ell,ji}\right|\right)
% $$
%
% Details to be filled in: Consider two types of tails for $X_{ij}$: sub-Gaussian; Bernstein moments on both $X_{\ell,ij}$ and $X_{\ell,ij}^2-\sigma_{\ell,ij}^2$ (satisfied by sparse Bernoulli).
To formalize this, define the following quantities,
\begin{align*}
\sigma_1^2=&\sum_{\ell=1}^{L}\|G_\ell\|^2,\\
\sigma_2=&\max_\ell\max\left\{\|G_\ell\|_{2,\infty},\|G_\ell^T\|_{2,\infty}\right\}
%\max\left\{\sum_\ell\|G_\ell\|_{2,\infty}^2,\sum_\ell\|G_\ell^T\|_{2,\infty}^2\right\}
\\
(\sigma_2')^2=&\sum_{\ell=1}^{L}\sum_{j=1}^{n}G_{\ell,jj}^2,\\
\sigma_3=&\max_\ell\|G_\ell\|_\infty\,,
\end{align*}
where $\|\cdot\|_{2,\infty}$ is the maximum $L_2$-norm of each row, and $\|\cdot\|_\infty$ is the maximum entry-wise absolute value.
The following theorem quantifies the random fluctuations of $S_1$, $S_2$ and $S$ around
their expectations.

\begin{theorem}\label{thm:quad_sum}
  If $X_1,...,X_L$ are independent $n\times n$ symmetric matrices with independent diagonal and upper diagonal entries satisfying \Cref{ass:bern1} and \Cref{ass:bern2p}. Let $G_1,...,G_L$ be $n\times n$ matrices. Define $S=\sum_\ell X_\ell G_\ell X_\ell^T$ and $S_1, S_2$ as in \eqref{eq:S2} and \eqref{eq:S1}.
  Then there exists a universal constant $C$ such that with probability at least $1-O((n+L)^{-1})$,
  \begin{align}
    \|S_1\|
    \le & C\bigg[
    v_1n \log(L+n)\sigma_1+\sqrt{v_1}R_1\sqrt{Ln}\log^{3/2}(L+n)\sigma_2\nonumber\\
&\quad+\sqrt{v_2'}\log(L+n)(\sqrt{L}\sigma_2+\sigma_2')+(R_1^2+R_2')\log^2(L+n)\sigma_3
    \bigg]\,.\label{eq:bound_on_S1}
  \end{align}
If in addition \Cref{ass:bern2} holds, then with probability at least $1-O((L+n)^{-1})$,
\begin{align}
    \|S_2-\mathbb E S_2\|
    \le & C\bigg[
\sqrt{v_2}\log(L+n)(\sqrt{L}\sigma_2+\sigma_2')+R_2\log(L+n)\sigma_3
    \bigg]\,.\label{eq:bound_on_S2-ES2}
  \end{align}
and consequently,
\begin{align}
\|S-\mathbb E S\| \le & C\bigg[
    v_1n \log(L+n)\sigma_1+\sqrt{v_1}R_1\sqrt{Ln}\log^{3/2}(L+n)\sigma_2\nonumber\\
&\quad+\sqrt{v_2'+v_2}\log(L+n)(\sqrt{L}\sigma_2+\sigma_2')+(R_1^2+R_2+R_2')\log^2(L+n)\sigma_3
    \bigg]\,.\label{eq:bound_on_quad_dev}
  \end{align}
  \end{theorem}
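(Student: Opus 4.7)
The plan is to work with the decomposition $S = S_1 + S_2$ from \eqref{eq:S2}--\eqref{eq:S1}, bound $\|S_2 - \mathbb{E} S_2\|$ and $\|S_1\|$ separately (using $\mathbb{E} S_1 = 0$), and then combine by the triangle inequality to obtain \eqref{eq:bound_on_quad_dev}. I would first address $\|S_2 - \mathbb{E} S_2\|$, which is the easier piece. Writing $S_2 - \mathbb{E} S_2 = \sum_{\ell,\, i \le j} (X_{\ell,ij}^2 - \mathbb{E} X_{\ell,ij}^2)\, M_{\ell,ij}$ with explicit deterministic matrix coefficients $M_{\ell,ij}$ read off from \eqref{eq:S2}, each scalar factor is $(v_2,R_2)$-Bernstein by \Cref{ass:bern2} (using the centering fact quoted earlier). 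The scalar-coefficient version of \Cref{lem:mat_prod} then applies, reducing matters to evaluating the variance parameter $\|\sum_{\ell, i \le j} M_{\ell,ij}^T M_{\ell,ij}\|$ and the max norm $\max_{\ell,i,j}\|M_{\ell,ij}\|$. Since each $M_{\ell,ij}$ is supported only on rows and columns $\{i,j\}$ with entries built from $G_{\ell,ii}$, $G_{\ell,jj}$, $G_{\ell,ij}$, $G_{\ell,ji}$, a direct calculation bounds these by $L\sigma_2^2 + (\sigma_2')^2$ and $\sigma_3$ respectively, yielding \eqref{eq:bound_on_S2-ES2}.

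The main effort lies in bounding $\|S_1\|$, which is a matrix-valued $U$-statistic of order two with a centered kernel. I would apply a decoupling inequality (of the de la Pe\~na--Montgomery-Smith type, adapted to symmetric matrices with tied off-diagonal entries) to introduce an independent symmetric copy $\tilde X_\ell$ of $X_\ell$, so that up to a universal constant $\|S_1\|$ is controlled by the norm of a bilinear form $\sum_\ell X_\ell G_\ell \tilde X_\ell$ restricted to the appropriate off-diagonal index set. Conditional on $\{\tilde X_\ell\}$, this bilinear form is linear in $\{X_\ell\}$ with matrix coefficients $H_\ell = G_\ell \tilde X_\ell$, and the symmetric version of \Cref{lem:mat_prod} applies directly. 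The resulting conditional tail bound involves three data-dependent quantities: $\|\sum_\ell H_\ell^T H_\ell\|$, $\sum_\ell \|H_\ell\|_F^2$, and $\max_\ell \|H_\ell\|_{2,\infty}$. The latter two are scalar sums (and maxima) of squared Bernstein variables and can be handled by scalar Bernstein together with a union bound over $\ell$ and row indices; \Cref{ass:bern2p} enters precisely here, since the cross-products $X_{\ell,ij}\tilde X_{\ell,i'j'}$ appearing inside these sums are $(v_2',R_2')$-Bernstein, which produces the $\sqrt{v_2'}(\sqrt{L}\sigma_2+\sigma_2')$ and $R_2'\sigma_3$ contributions in \eqref{eq:bound_on_S1}.

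The main obstacle is controlling the first quantity, $\|\sum_\ell H_\ell^T H_\ell\| = \|\sum_\ell \tilde X_\ell G_\ell^T G_\ell \tilde X_\ell\|$, since this is itself a matrix-valued quadratic form of exactly the same type as $S$. I would avoid strict recursion by exploiting that each $G_\ell^T G_\ell$ is positive semidefinite: its expectation $\sum_\ell \mathbb{E}[\tilde X_\ell G_\ell^T G_\ell \tilde X_\ell]$ is a diagonal-dominant PSD matrix whose operator norm is already $O(v_1 n \sigma_1^2)$, so it suffices to control the deviation around this mean by any cruder argument (for instance, one further round of the $S_1/S_2$ split with loose bookkeeping, or a direct row-wise Bernstein bound) that can be absorbed into the mean. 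Plugging $\|\sum_\ell H_\ell^T H_\ell\| \lesssim v_1 n \sigma_1^2$ back into the conditional \Cref{lem:mat_prod} yields the leading $v_1 n \log(L+n)\sigma_1$ term in \eqref{eq:bound_on_S1}, while the row-norm bound $\max_\ell \|H_\ell\|_{2,\infty} \lesssim \sqrt{v_1 n}\,\sigma_2$ produces the $\sqrt{v_1}R_1\sqrt{Ln}\,\sigma_2$ term; the elevated $\log^{3/2}$ and $\log^2$ powers come from the successive conditioning and union-bound steps. Finally \eqref{eq:bound_on_quad_dev} follows by combining \eqref{eq:bound_on_S1} and \eqref{eq:bound_on_S2-ES2} via the triangle inequality, with the $\sqrt{v_2}$ and $\sqrt{v_2'}$ contributions merging into the stated $\sqrt{v_2+v_2'}$.
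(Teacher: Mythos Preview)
Your overall architecture matches the paper's: split $S=S_1+S_2$, handle $S_2-\mathbb E S_2$ by matrix Bernstein, and handle $S_1$ by decoupling followed by a conditional application of \Cref{lem:mat_prod} with $H_\ell=G_\ell\tilde X_\ell$. However, two steps in your proposal are genuinely off and would not go through as written.

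First, after decoupling you do \emph{not} get $\sum_\ell X_\ell H_\ell$. Decoupling controls $\|S_1\|$ by $\|\tilde S_1\|$, the decoupled version still restricted to non-coincident index pairs. The paper then bounds $\|\tilde S_1\|\le \|\tilde S\|+\|\tilde S_2\|$, where $\tilde S=\sum_\ell X_\ell G_\ell\tilde X_\ell^T$ is the \emph{full} bilinear form and $\tilde S_2$ is the coincident-index piece built from the products $Z_{\ell,ij}=X_{\ell,ij}\tilde X_{\ell,ij}$. It is $\tilde S_2$, not any norm of $H_\ell$, that invokes \Cref{ass:bern2p} and generates the $\sqrt{v_2'}(\sqrt L\,\sigma_2+\sigma_2')$ and $R_2'\sigma_3$ contributions. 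Your claim that \Cref{ass:bern2p} enters through $\sum_\ell\|H_\ell\|_F^2$ and $\max_\ell\|H_\ell\|_{2,\infty}$ cannot be right: $H_\ell=G_\ell\tilde X_\ell$ involves only the single copy $\tilde X_\ell$ and contains no cross-products $X_{\ell,ij}\tilde X_{\ell,ij}$ at all.

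Second, your treatment of $\|\sum_\ell H_\ell^T H_\ell\|=\|\sum_\ell \tilde X_\ell G_\ell^T G_\ell\tilde X_\ell\|$ is circular. The deviation you propose to control ``by any cruder argument'' is itself a matrix quadratic form of exactly the same type as $S$, and neither ``one further round of the $S_1/S_2$ split'' nor a ``row-wise Bernstein'' terminates the recursion. The paper sidesteps this entirely with the elementary inequality $\|\sum_\ell H_\ell^T H_\ell\|\le \sum_\ell \|\tilde X_\ell G_\ell^T\|^2$ and then applies \Cref{lem:mat_prod} to each \emph{linear} object $\tilde X_\ell G_\ell^T$ individually, yielding $v_1 n\log(L+n)\sigma_1^2+R_1^2 L\log^2(L+n)\sigma_2^2$; plugging this back into the conditional \Cref{lem:mat_prod} is what simultaneously produces the $v_1 n\sigma_1$ and the $\sqrt{v_1}R_1\sqrt{Ln}\,\sigma_2$ terms. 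Likewise, $\max_\ell\|H_\ell\|_{2,\infty}$ is bounded by applying \Cref{lem:mat_prod} to each vector $\tilde X_\ell G_{\ell,j\cdot}$, not by any argument about squared Bernstein variables.
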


The proof of \Cref{thm:quad_sum} is given in \Cref{app:1}, where the main effort is to control $\|S_1\|$.  Unlike the linear combination case, the complicated dependence caused by the quadratic form needs to be handled by viewing $S_1$ as a matrix-valued $U$-statistic indexed by the pairs $(i,j)$, and using a decoupling technique due to \cite{PenaM95}.  This reduces the problem of bounding $\|S_1\|$ to that of bounding $\|\sum_\ell X_\ell G_\ell \tilde X_\ell^T\|$, where $\tilde{X}_1,...,\tilde{X}_L$ are i.i.d. copies of $X_1,...,X_L$.

The upper bounds in \Cref{thm:quad_sum} look complicated. This is because we do not make any assumption about the Bernstein parameters or the matrices $G_\ell$.  The bound can be much simplified or even improved in certain important special cases.
 In the sub-Gaussian case, where $R_1\asymp v_1^{1/2}\asymp R_2^{1/2}\asymp v_2^{1/4}$, the first term $v_1 n \log (L+n)\sigma_1$ in \eqref{eq:bound_on_S1} dominates. This reflects the $L^{1/2}$ effect for sums of independent random variables. For example, in the case $G_\ell=G_0$ for all $\ell$ and $X_\ell$ are i.i.d., we have $\|\mathbb E S\|\approx L\|X_1 G_0 X_1^T\|\asymp v_1 n L \|G_0\|$, but when we consider the fluctuations contributed by $S_1$, we have $\|S_1\|\lesssim v_1 n  L^{1/2}\|G_0\|$ ignoring logarithmic factors.  In other words, the signal is contained in $\mathbb E S_2$ whose operator norm may grow linearly as $L$, while the fluctuation in the operator norm of $S_1$ only grows at a rate of $L^{1/2}$.

Additionally, in the Bernoulli case, the situation becomes more complicated when the variance $v_1$ is vanishing, meaning that  $v_1\asymp v_2\asymp (v')^{1/2}_2 \ll R_1\asymp R_2$. In the simple case of $G_\ell=I_n$, we have $\sigma_1=L^{1/2}$, $\sigma_2=\sigma_3=1$. Thus the second term $(v_1Ln)^{1/2}\sigma_2$ in \eqref{eq:bound_on_S1} may dominate the first term when $nv_1\ll 1$.  In this case, we also have $\sigma_2'=(Ln)^{1/2}$.  Therefore, it is also possible that the term $v_2^{1/2}\sigma_2$ in \eqref{eq:bound_on_S2-ES2} may be large.
It turns out that in such very sparse Bernoulli cases, the bound on the fluctuation term $\|S_1\|$ can be improved by a more refined and direct upper bound for $\|\sum_\ell X_\ell X_\ell^T\|=\|S\|$.  The details are presented in the next subsection.

% \begin{remark}
%   When $L=1$ and $G=I_n$, if $X_\ell$'s have sub-Gaussian entries, then $\|\mathbb E S\|$ are comparable with the fluctuation $\|S_1\|$.  However, if $L>1$, the expectation $\|\mathbb E S\|$ aggregates linearly in $L$ while the fluctuation $\|S_1\|$ only grows at $L^{1/2}$.
% \end{remark}

\subsection{Sparse Bernoulli matrices}\label{sec:sparse_bernoulli}
In this section, we focus on the case where $G_\ell=I_n$ for all $\ell$, and the $X_\ell$'s are symmetric with centered Bernoulli entries whose probability parameters are bounded by $\rho$. Here, $\rho$ can be very small.
In this case, Assumptions \ref{ass:bern1}, \ref{ass:bern2} and \ref{ass:bern2p} hold with $v_1=v_2=2\rho$, $R_1=R_2=R_2'=1$, $v_2'=2\rho^2$, and the matrices $G_\ell$ satisfy $\sigma_1=L^{1/2}$, $\sigma_2=\sigma_3=1$, $\sigma_2'=(Ln)^{1/2}$.

Ignoring logarithmic factors, the first part of \Cref{thm:quad_sum} becomes
\begin{align*}
  \|S_1\|\lesssim C\left[L^{1/2}n\rho + (Ln\rho)^{1/2}+1\right]\,,
\end{align*}
where the second term $(Ln\rho)^{1/2}$ can be dominating when $n\rho$ is small and $Ln\rho$ is large.  This is suboptimal since intuitively we expect that the main variance term $L^{1/2}n\rho$ is the leading term as long as its value is large enough, which only requires $n\rho \gg L^{-1/2}$. 
To investigate the cause of this suboptimal bound, observe that $(Ln\rho)^{1/2}$
originates from the second term $R_1(v_1Ln)^{1/2}\sigma_2$ in \eqref{eq:bound_on_S1}.
Investigating the proof of \Cref{thm:quad_sum}, this term is derived by bounding $\sum_{\ell}\|H_\ell^T H_\ell\|$ by $\sum_\ell \|H_\ell\|^2$, which is suboptimal in this sparse Bernoulli case when applying the decoupling technique. The following result shows a sharper bound in this setting using a more refined argument.

\begin{theorem}\label{thm:s1_sparse_bernoulli}
  Assume $G_\ell=I_n$ for all $\ell \in \{1,...,L\}$ and $X_1,...,X_L$ are symmetric with centered Bernoulli entries whose parameters are bounded by $\rho$.  If $L^{1/2}n\rho\ge C_1\log^{1/2}(L+n)$ and $n\rho\le C_2$ for some constants $C_1$, $C_2$, then
  with probability at least $1-O((n+L)^{-1})$,
  \begin{equation}\label{eq:s1_sparse_bernoulli}
    \|S_1\|\le C L^{1/2}\rho n \log^{1/2}(L+n)
  \end{equation}
   for some constant $C$\,.
\end{theorem}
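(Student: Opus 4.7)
The plan is to revisit the proof of \Cref{thm:quad_sum} and replace a single loose step by a sharper argument tailored to the sparse Bernoulli setting. First I would apply the Pena-Montgomery-Smith decoupling used in that proof to reduce $\|S_1\|$ to the problem of bounding $\|\sum_\ell X_\ell\tilde X_\ell\|$ up to a universal constant, where $\tilde X_1,\ldots,\tilde X_L$ are independent copies of $X_1,\ldots,X_L$. Conditioning on $\tilde X$, I would invoke the symmetric version of \Cref{lem:mat_prod} with $H_\ell=\tilde X_\ell$, $v_1=2\rho$, and $R_1=1$. Since each $\tilde X_\ell^2$ is positive semidefinite one has $\sum_\ell\|\tilde X_\ell\|_F^2=\mathrm{tr}(\sum_\ell\tilde X_\ell^2)\le n\|\sum_\ell \tilde X_\ell^2\|$, so the ``$\vee$'' in the variance parameter of \eqref{eq:mat_prod} is always attained by $n\|\sum_\ell\tilde X_\ell^2\|$. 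The entire refinement comes from retaining this $\|\sum_\ell\tilde X_\ell^2\|$ rather than relaxing it further by triangle inequality to $\sum_\ell\|\tilde X_\ell\|^2$, which is what produces the suboptimal term in \eqref{eq:bound_on_S1}.

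The key technical step is then a sharp high-probability upper bound $M:=\|\sum_\ell\tilde X_\ell^2\|\lesssim Ln\rho$. I would decompose $\sum_\ell\tilde X_\ell^2$ as (i) its diagonal mean, of norm at most $Ln\rho$, (ii) a centered diagonal part bounded by $O(\sqrt{Ln\rho\log(L+n)}+\log(L+n))$ via scalar Bernstein and a union bound over the $n$ rows, and (iii) an off-diagonal piece identically distributed to $S_1$. Piece (iii) creates an apparent circularity. I would close the loop with a self-bounding argument: letting $F$ denote the desired high-probability bound on $\|S_1\|$, combining the conditional Bernstein bound with the decomposition of $M$ and with the row-norm bound $\max_\ell\|\tilde X_\ell\|_{2,\infty}^2\le\max_{\ell,i}\sum_j\tilde X_{\ell,ij}^2=O(n\rho+\log(L+n))$ (again by Bernstein and a union bound over $nL$ rows) gives
$$F\le C\sqrt{\rho n\log(L+n)\bigl(Ln\rho+\sqrt{Ln\rho\log(L+n)}+F\bigr)}+C\bigl(\sqrt{n\rho}+\sqrt{\log(L+n)}\bigr)\log(L+n).$$
Under the hypotheses $L^{1/2}n\rho\ge C_1\log^{1/2}(L+n)$ and $n\rho\le C_2$ one readily verifies $Ln\rho\gtrsim \log(L+n)$, so both the additive term and the $F$-dependent piece inside the square root are absorbed, and solving the resulting quadratic in $F$ yields $F\le CL^{1/2}n\rho\log^{1/2}(L+n)$, the claim.

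The main obstacle is handling the circular dependence in step two. This can be done either by the self-bounded quadratic above, or equivalently by starting from the looser bound \eqref{eq:bound_on_S1} of \Cref{thm:quad_sum} and iterating the conditional Bernstein step a constant number of times until the spurious polylogarithmic factor is shaved off; the assumption $L^{1/2}n\rho\gtrsim \log^{1/2}(L+n)$ is exactly what makes this iteration contract to a constant fixed point. A secondary concern is ensuring the additive Bernstein-regime term is dominated by the main variance term under the stated assumptions; this relies on the fact that sparse Bernoulli matrices with $n\rho=O(1)$ have maximum degree only $O(\log(L+n))$, which keeps the row $\ell_2$ norms tame enough for the absorption to go through.
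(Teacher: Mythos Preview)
Your proposal is sound and reaches the same conclusion, but the key step differs genuinely from the paper's argument. Both routes open identically: decouple to pass from $S_1$ to $\tilde S_1=\tilde S-\tilde S_2$ with $\tilde S=\sum_\ell X_\ell\tilde X_\ell$ (you elide the easy diagonal piece $\tilde S_2$, which in the $G_\ell=I_n$ case is a sum of $(2\rho^2,1)$-Bernstein variables and is harmless), then apply \Cref{lem:mat_prod} conditionally with $H_\ell=\tilde X_\ell$. The divergence is in how the variance proxy $\bigl\|\sum_\ell\tilde X_\ell^2\bigr\|$ is controlled. You exploit that its off-diagonal part is equal in law to $S_1$ itself, obtaining a self-referential inequality for the high-probability quantile of $\|S_1\|$; solving this quadratic (equivalently, iterating from the crude bound of \Cref{thm:quad_sum}) contracts to the target rate once $L\gtrsim\log(L+n)$, which follows from the hypotheses with $C_1$ large. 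The paper instead \emph{breaks} the circularity: it writes $\tilde X_\ell=\tilde A_\ell-P_\ell$ and splits $\tilde S=\sum_\ell X_\ell\tilde A_\ell-\sum_\ell X_\ell P_\ell$. The $P_\ell$ piece has deterministic coefficients and is immediate from \Cref{lem:mat_prod}; for the $\tilde A_\ell$ piece, non-negativity of the adjacency entries lets Perron--Frobenius reduce $\bigl\|\sum_\ell\tilde A_\ell^2\bigr\|$ to a maximum row-sum, which is bounded directly by scalar Bernstein (this is packaged as a separate counting lemma that is reused in the consistency proof). Your bootstrap is arguably more portable, since it never uses the entrywise sign structure of Bernoulli matrices; the paper's shift-to-$\tilde A_\ell$ trick is cleaner in that it avoids any fixed-point reasoning and yields the auxiliary bound $\|\sum_\ell A_\ell^2\|\lesssim Ln\rho$ as a by-product.
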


The proof of \Cref{thm:s1_sparse_bernoulli} is given in \Cref{app:2} where we modify our usage of the decoupling technique. At a high level, the decoupling technique reduces the problem to controlling the operator norm of $\tilde S=\sum_\ell X_\ell \tilde X_\ell^T$ where $\tilde X_\ell$ is an i.i.d. copy of $X_\ell$.  Instead of directly applying \Cref{lem:mat_prod} with $H_\ell=\tilde X_\ell$, we instead shift $\tilde X_\ell$ back to the original Bernoulli matrix by considering $\tilde S= \sum_\ell X_\ell\tilde A_\ell - \sum_\ell X_\ell P_\ell$, where $\tilde A_\ell$ is the original uncentered binary matrix and $P_\ell=\mathbb E \tilde A_\ell$.  Then \Cref{lem:mat_prod} is applied to $\sum_\ell X_\ell P_\ell$ and $\sum_\ell X_\ell \tilde A_\ell$ separately, where the entry-wise non-negativity of $\tilde A_\ell$ allows us to use the Perron--Frobenius theorem to obtain a sharper bound for $\|\sum_\ell\tilde A_\ell^2\|$.

% \section{Bias adjusted spectral clustering for multi-layer stochastic block models}
% We demonstrate the application of the matrix concentration inequalities in the previous section by considering spectral clustering in multi-layer stochastic block models.

\section{Further simulation study}\label{sec:simu}

In the following simulation study, we show that bias-adjusting sum of squared adjacency matrices constructed in \eqref{eq:S0} 
%by 
%setting the diagonal entries of $\sum_{\ell} A_{\ell}^2$ to 0 
has a measurable impact on the downstream
spectral clustering accuracy, and that our method performs favorably against other competing methods. This builds upon the simulation initially shown in \Cref{sec:necessity}.

\paragraph{Data-generating process.} We design the following simulation setting to highlight the importance of
bias adjustment for $\sum_{\ell} A_{\ell}^2$. We consider $n=500$ nodes per network across $K=3$ communities, 
with imbalanced sizes $n_1= 200, ~ n_2 = 50,~  n_3 = 250$.
%This determines the vector $\theta \in \{1,...,K\}^n$ where $\theta_i$ encodes the membership index of node $i$.
%Using these memberships, we can form a membership matrix $M \in \{0,1\}^{n\times K}$ where row $i$ is an all-0 vector
%except having an entry 1 in the column corresponding to the cluster the $i$th node is in, for $i \in \{1,...,n\}$.
We construct two edge-probability matrices that share the same eigenvectors,
\begin{equation} \label{eq:eigenvector}
W= \begin{bmatrix}
1/2 & 1/2 & -\sqrt{2}/2\\
1/2 & 1/2 & \sqrt{2}/2\\
 \sqrt{2}/2 & - \sqrt{2}/2 & 0
\end{bmatrix}.
\end{equation}
% For notational convenience, we will call the first through the third columns of $W$ as the first through third eigenvectors.
The two edge-probability matrices are
\begin{align*} \label{eq:connectivity}
B^{(1)} &= W \begin{bmatrix}
1.5 & 0 & 0\\
0 & 0.2 & 0\\
0 & 0 & 0.4
\end{bmatrix}W^\top \approx
\begin{bmatrix}
0.62 & 0.22 & 0.46\\
0.22 & 0.62 & 0.46\\
0.46 & 0.46 & 0.85
\end{bmatrix} 
,\\
B^{(2)} &= W \begin{bmatrix}
1.5 & 0 & 0\\
0 & 0.2 & 0\\
0 & 0 & -0.4
\end{bmatrix}W^\top\approx
\begin{bmatrix}
0.22 & 0.62 & 0.46\\
0.62 & 0.22 & 0.46\\
0.46 & 0.46 & 0.85
\end{bmatrix}. \nonumber
\end{align*}
We then generate $L=100$ layers of adjacency matrices, where each layer is drawn by setting the edge-probability matrices $B_{\ell} = \rho B^{(1)}$ for $\ell \in \{1,...,L/2\}$ and $B_{\ell} = \rho B^{(2)}$ for $\ell \in \{L/2+1,...,L\}$.
Using this, we generate the adjacency matrices via \eqref{eq:gen_mlsbm}, with $\rho$ varying from $0.025$ to $0.2$.

We choose this particular simulation setting for two reasons. First, the first two eigenvectors in $W$ are not sufficient to distinguish between the first two communities.
%This means the third eigenvector is also important for spectral clustering to succeed. 
Hence, methods based on $\sum_{\ell} A_{\ell}$ are not expected to perform well since the third eigen-component cancels out in the summation. Second, the average degrees among the three communities are drastically different, which are $251\rho$, $191\rho$ and $327\rho$ respectively. This means the variability of degree matrix $D_{\ell}$'s diagonal entries will be high, helping demonstrating the effect of our method's bias adjustment.

\paragraph{Methods we consider.} We consider the following four ways to aggregate information across all $L$ layers, three of which were used earlier in \Cref{fig:simu}: 1) the sum of adjacency matrices without squaring (i.e., considering $M = \sum_{\ell} A_{\ell}$, ``Sum''), 2) the sum of squared adjacency matrices (i.e., considering $M = \sum_{\ell} A^2_{\ell}$, ``SoS''), 3) our proposed bias-adjusted sum of squared adjacency matrices (i.e., considering \eqref{eq:S0}, or equivalently $M = \sum_{\ell} A^2_{\ell}$ and then zeroing out the diagonal entries, ``SoS-Debias''), and 4)
column-wise concatenating the adjacency matrices together, specifically, considering
 \[
M = \begin{bmatrix}
A_1 & A_2 & ... & A_L
\end{bmatrix} \in \mathbb{R}^{n \times (Ln)}.
\]
(i.e., ``Tensor matricization''). This method is commonly-used in the tensor literature (see \cite{zhang2018tensor} for example), where the $L$ adjacency matrices can be
viewed as a $n \times n \times L$ tensor, and the column-wise concatenation converts the tensor into a matrix.  
Then, using one of the four construction of the aggregated matrix $M$, we then apply spectral clustering onto $M$, meaning we first compute the matrix containing the leading $K$ left singular vectors of $M$ and perform K-means on its rows.
%\begin{itemize}
%\item \textbf{Sum}: This considers aggregation information via $M = \sum_{\ell} A_{\ell} \in \mathbb{R}^{n\times n}$. 
%\item \textbf{Sum-of-squares}: This considers aggregation information via $M = \sum_{\ell} A^2_{\ell} \in \mathbb{R}^{n\times n}$.
%\item \textbf{Sum-of-squares with diagonal debiasing}: This considers aggregation information via $M = \sum_{\ell} A^2_{\ell} \in \mathbb{R}^{n\times n}$,
%and then setting all the diagonal entries to be zero.  This is the procedure we propose in this article.
%\item \textbf{Matricization of the tensor}: Viewing $A_1,...,A_L$ as a tensor of dimension $n \times n \times L$, this matricizes the tensor by forming the matrix
%\[
%M = \begin{bmatrix}
%A_1 & A_2 & ... & A_L
%\end{bmatrix} \in \mathbb{R}^{n \times (nL)}.
%\]
%\end{itemize}

 Additionally,  we consider two methods that developed in \cite{PaulC17} called Linked Matrix Factorization (i.e., ``LMF'') and  Co-regularized Spectral Clustering (i.e., ``Co-reg''). These two methods fall outside the framework of the four methods discussed above. Instead, 
they use optimization procedures designed with different so-called fusion techniques to solve for an appropriate low-dimensional embedding shared among all $L$ layers, and then perform K-means clustering on its rows.

%we can consider two different ways to estimate the
%memberships of the $n$ nodes via $M$. Let the SVD of $M$ be defined as $M=UDV^\top$, where $D$ are the singular values sorted in order (i.e., $D_{11} \geq D_{22} \leq D_{nn}$). 
%\begin{itemize}
%\item \textbf{Spectral clustering}: This performs K-means on the rows of the first $K$ columns of $U$.
%\item \textbf{Weighted spectral clustering}: This performs K-means on the rows of the first $K$ columns of $U \sqrt{|D|}$.
%%\item \textbf{Greedy clustering}: This is a two-step procedure considered for only when $M$ is constructed via ``matricization of the tensor.'' In the first step, it performs K-means on the rows of $M$, and in the second step, it greedily updates the clustering by considering the total clustering-improvement when viewing all $L$ layers separately. This procedure is described in \cite{lei2019consistent}.
%\end{itemize} 
%In total, we have eight methods.
%where we consider all combinations of 
%\begin{align*}
%&\{\text{Sum}, \text{Sum-of-squares}, \text{Sum-of-squares with diagonal debiasing}, \text{Matricization of the tensor}\} \times \\
%&\qquad \{\text{Spectral clustering}, \text{Weighted spectral clustering}\},
%\end{align*}
%as well as ``Matricization of the tensor'' followed by ``Greedy clustering.'' For all nine methods, we assume $K$ is known.

\begin{figure}[t]
  \centering
   \includegraphics[width=350px]{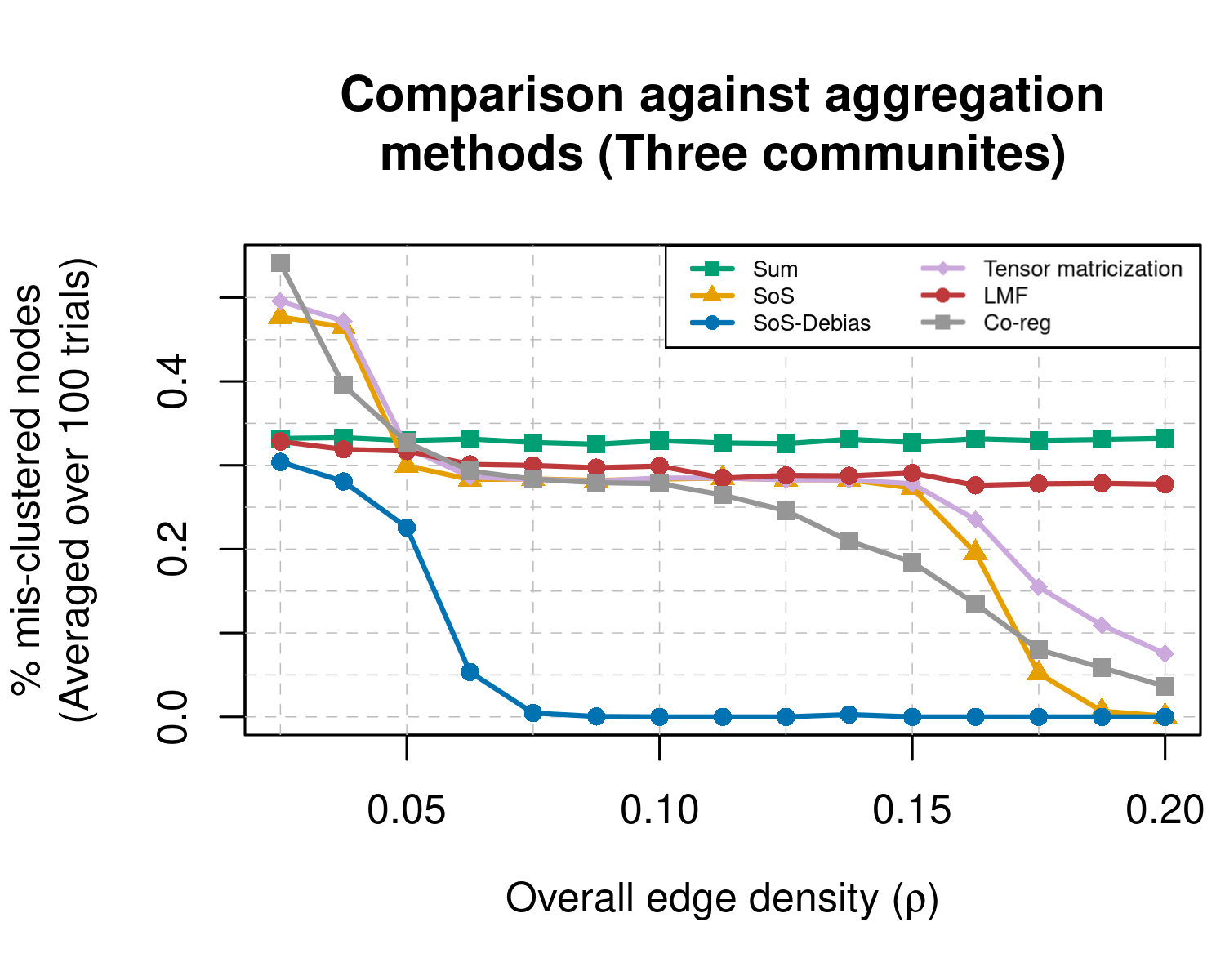} 
   \vspace{-2em}
    \caption{ \small 
   The average proportion of mis-clustered nodes for eight methods (measured via Hamming distance $n^{-1}d(\hat\theta,\theta)$ shown in \eqref{eq:hamming}, averaged over 100 trials), with $n=500$ with three unequally-sized communities among overall edge densities ranging from $\rho \in [0.025,0.2]$ and $L=100$ layers. Six methods' performance are shown: ``Sum'' (green squares), ``SoS'' (orange triangles), ``Bias-adjusted SoS'' (blue circles), ``Tensor matricization'' (purple diamonds), ``LMF'' (red circles), and ``Co-reg'' (gray squares).
%   The average Hamming distance among all 100 trials, across eight methods. The y-axis shows the average Hamming distance $n^{-1}d(\hat{\theta}, \theta)$ between the true memberships in $\theta$ and the estimated membership divided by $n$ along the y-axis, and the x-axis shows the 15 values of $\rho$ considered. The performance of our method (i.e., ``Sum-of-squares with bias adjustment' is shown in solid blue, and is shown on both plots as a benchmark.
   }
    \label{fig:simulation_rho}
\end{figure}

\paragraph{Results.} The results shown in \Cref{fig:simulation_rho} demonstrate that bias-adjusting the diagonal entries of $\sum_{\ell} A^2_{\ell}$ has a noticeable impact on the clustering accuracy.
Using the aforementioned simulation setting and methods, we vary $\rho$ from 0.025 to $0.2$ in 15
equally-spaced values, and compare the methods for each setting of $\rho$ across 100 trials by measuring the average Hamming distance (i.e., $n^{-1}d(\hat{\theta}, \theta)$ defined in \eqref{eq:hamming}) between the true memberships in $\theta$ and the estimated membership $\hat\theta$.
We observe phenomenons in \Cref{fig:simulation_rho} which all agree with our intuition and theoretical results.
Specifically, summing the adjacency matrices hinders our ability to
cluster the nodes due to the cancellation of positive and negative eigenvalues
(green squares), and the diagonal bias induced by squaring the adjacency matrices has a 
profound effect in the range of $\rho \in [0.08, 0.17]$, which our bias-adjusted sum-of-squared method removes (purple diamonds verses blue circles). We also
see that our bias-adjusted sum-of-squared method out-performs Linked Matrix Factorization (red circles) and Co-regularized Spectral Clustering (gray squares). 
While the LMF method and Co-reg method show some improvements over the Sum and SoS methods, respectively, they still behave qualitatively similar.  This observation suggests that these two methods may have similar difficulty in aggregating layers without positivity or removing the diagonal bias.
%While the all three methods in question have consistency results that do not rely on a layer-wise positive assumption, 
% This empirical finding is possibly driven by the fact that the latter two methods 
% solve for the appropriate embedding via non-convex optimization problems, which
% could be more challenging when the networks do not have layer-wise positivity.
%Specifically, observe that methods based on $\sum_{\ell}A_{\ell}$ (shown in green) all under-perform. 

\paragraph{Intuition behind results.} We provide additional intuition behind the results shown in \Cref{fig:simulation_rho}
by visualizing the impact
of the diagonal terms on the overall spectrum and quantifying the loss of population signal due to the bias.
%{\color{red}(These could be probably moved into the appendix of the actual paper)} 

First, we demonstrate in \Cref{fig:simulation_bias} that the third leading eigenvalue of $\sum_{\ell} A_{\ell}^2$ when $\rho = 0.15$
is indistinguishable from the remaining bulk ``noise'' eigenvalues if the diagonal
bias is not removed (left), but becomes well-separated if so (middle). Recall by construction \eqref{eq:eigenvector}, all three eigenvectors are needed for recovering the communities. Hence, if the third eigenvalue of $\sum_{\ell} A_{\ell}^2$ is indistinguishable from fourth through last eigenvalues (i.e., the ``noise''),
then we should expect many nodes to be mis-clustered. This is exactly what
\Cref{fig:simulation_bias} (left) shows, where the third eigenvalue (denoted by the left-most red vertical line) is not separated from the remaining eigenvalues.
However, when we appropriately bias-adjust $\sum_{\ell} A_{\ell}^2$ via \eqref{eq:S0}, then \Cref{fig:simulation_bias} (middle) shows that the third eigenvalue is now well-separated from the remaining eigenvalues. This demonstrates the importance of bias-adjustment for community estimation in this regime of $\rho$.

\begin{figure}[t]
  \centering
   \includegraphics[width=\textwidth]{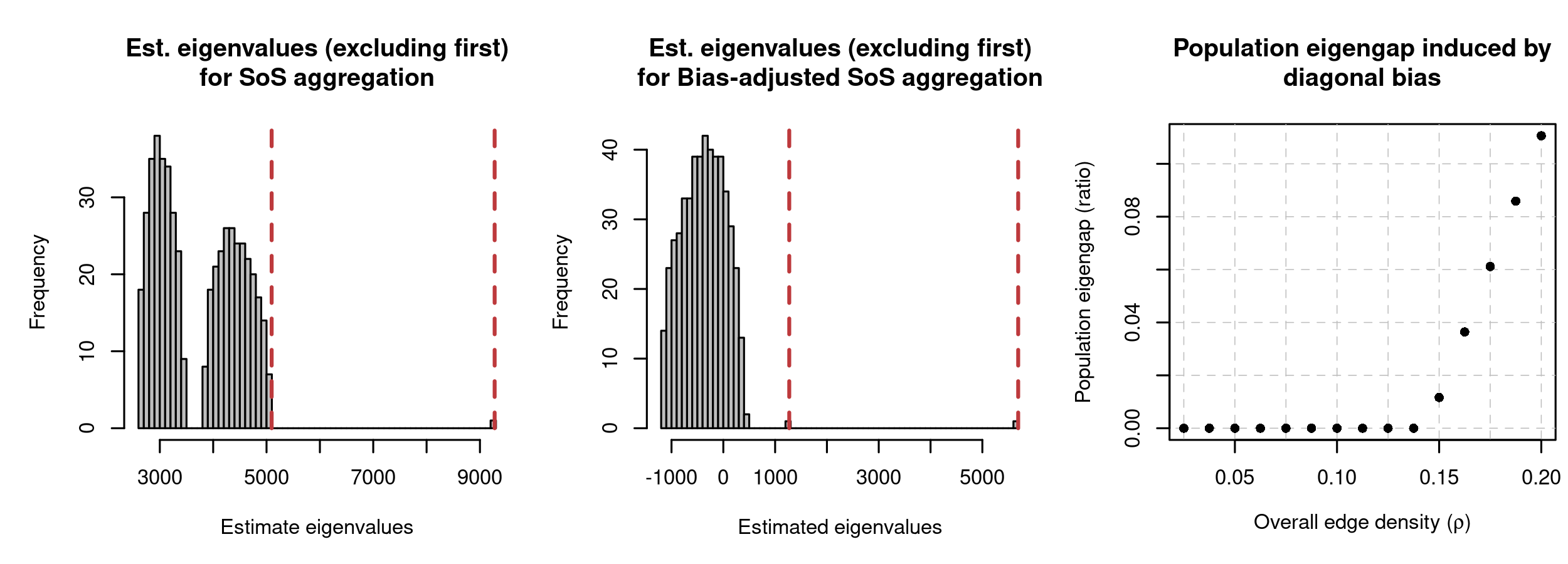}  
   \caption
   { \small (Left): For one realization of $A_1,...,A_L$ given the setup described in the simulation with $\rho=0.15$, a histogram of all 500 eigenvalues of $\sum_{\ell}A_{\ell}^2$, where the red vertical dashed lines denote the second and third eigenvalues. (The first eigenvalue is too large to be shown.) (Middle): Similar to the left plot, but showing the 500 eigenvalues of the bias-adjusted variant of $\sum_{\ell}A_{\ell}^2$ (i.e., setting the diagonal to be all 0's). (Right):  The population eigengap $(\lambda_3-\lambda_4)/\lambda_3$ computed from $\sum_{\ell}P_{\ell}^2+\tilde{D}_{\ell}$ for varying values of $\rho$. }
    \label{fig:simulation_bias}
\end{figure}

Next, in \Cref{fig:simulation_bias} (right), we show that this lack-of-separation
between the third eigenvalue and the noise can be observed on the population
level. Specifically, we show that the population counterpart of $\sum_{\ell} A_{\ell}^2$ has considerable diagonal bias that makes the accurate estimation of the third eigenvector nearly impossible when $\rho$ is too small. 
To show this, for a particular value of $\rho$, recall from our theory that the population counterpart of $\sum_{\ell} A_{\ell}^2$ is
\[
\sum_{\ell=1}^{L} (P_{\ell}^2 + \tilde{D}_\ell), \text{ for a diagonal matrix }\tilde{D}_{\ell}  \text{ where } 
\tilde{D}_{\ell,ii} = \sum_{j=1}^{n}P_{\ell,ij} \text{ for } 1 \leq i \leq n,
\]
and $P_{\ell} = \mathbb{E}A_{\ell}$. Let $\lambda_1,...,\lambda_n$ denote the $n$ eigenvalues of the above matrix, dependent on $\rho$. We then plot $(\lambda_3-\lambda_4)/\lambda_4$ against $\rho$ in \Cref{fig:simulation_bias} (right).
This plot demonstrates that when $\rho$ is too small, the diagonal entries (represented by $\tilde{D}_\ell$'s) add
a disproportionally large amount of bias that makes it impossible to accurately distinguish between the third and fourth eigenvectors.
Additionally, the raise in the eigengap in \Cref{fig:simulation_bias} (right) at $\rho = 0.15$ corresponds to when
``SoS'' starts to improve in \Cref{fig:simulation_rho} (orange triangles).
This means starting at $\rho = 0.15$, the effect of the diagonal bias starts to diminish, and at larger values of $\rho$, the sum of squared adjacency matrices contains accurate information for community estimation (both with and without bias adjustment). We report additional results in \Cref{app:additional_simu}, where we report the time needed for each method, visualize the lack of concentration in the nodes' degrees in sparse graphs and its effect on the spectral embedding, and also 
report that the qualitative trends in \Cref{fig:simulation_rho} remain the same when we either consider the varying-membership setting (described in \Cref{cor:varying_membership}) or an additional variant of spectral clustering where the eigenvectors are reweighted by its corresponding eigenvalues.

\section{Data application: Gene co-expression patterns in developing monkey brain}\label{sec:data}

We analyze the microarray dataset of developing rhesus monkeys' tissue from the medial prefrontal cortex introduced in \Cref{sec:intro} that was originally
collected in \cite{bakken2016comprehensive} to demonstrate the utility of our bias-adjusted sum-of-squared spectral clustering method.
As described in other work that analyze this data (\cite{LiuCXR18} and \cite{LeiCL19}), this is a suitable
dataset to analyze as other work have well-documented that the gene co-expression patterns in monkeys' tissue from this brain region
change dramatically over development.
Specifically, the data from \cite{bakken2016comprehensive} consists of the gene co-expression network of ten different developmental times (starting from 40 days in the embryo to 48 months after birth) derived from microarray data, where each of the developmental time points corresponds to post-mortem tissue samples of multiple unique rhesus monkeys. With this data, we aim to show 
that our bias-adjusted sum-of-squared spectral clustering method produces insightful gene communities.

% {\color{red}describe goal and data}

\paragraph{Preprocessing procedure.}
The microarray dataset from \cite{bakken2016comprehensive} contains $n=9173$ genes measured among many samples across the $L=10$ layers, which we preprocess into ten adjacency matrices in the following way in line with other work like \cite{langfelder2008wgcna}. First, for each layer $\ell \in\{1,...,L\}$, we
construct the Pearson 
correlation matrix. % raised to the sixth power, as suggested by methods such as WGCNA \citep{langfelder2008wgcna}. 
Then, we convert each correlation 
matrix into adjacency matrix by hard-thresholding at $0.72$ in absolute value, resulting in ten adjacency matrices $A_1, ..., A_{L}$. We choose this particular threshold since it yields sparse and scale-free networks that have many disjoint connected components individually but have one connected component after aggregation, as reported in \Cref{app:additional_data}. Lastly, we remove all the genes corresponding to nodes whose total degree 
across all ten layers is less than 90. This value is chosen
since the median total degree among all nodes that do not have any neighbors in five or more of the layers (i.e., a degree of zero in more than half the layers) is 89.  In the end, we have ten adjacency matrices $A_1, ..., A_{L} \in \{0,1\}^{7836 \times 7836}$, each representing a network corresponding to 7836 genes. 
%We provide summary statistics of all ten graphs in \Fref{app:summary} to
%provide a high-level understanding of the graphs' properties. 
We note that the above procedure of transforming correlation matrices into adjacency matrices is unlikely to procedure networks that severely violate the layer-wise positivity assumption commonly required by other methods -- this hypothetically could happen
if many pairs of genes display high negative correlations, but this is not typical in genomic data. Nonetheless, we are interested in what insights the bias-adjusted sum-of-squared spectral clustering method can reveal for this dataset.

\paragraph{Results and interpretation.} The following results show that bias-adjusted sum-of-squared spectral clustering
finds meaningful gene communities. Prior to using our method, we select the dimensionality and number of communities to be $K=8$ based on a scree plot of the singular values of the bias-adjusted variant of $\sum_{\ell}A^2_{\ell}$.
%, shown in \Fref{fig:Writeup4_pnas_svd}. Here, we choose $K=8$ since it separates the first eight singular dimensions from the ninth,
%after the last visually-apparent drop in the singular value (\Fref{fig:Writeup4_pnas_svd}, right plot).
%
%\begin{figure}[H]
%  \centering
%   \includegraphics[width=300px]{fig/data_writeup4/Writeup4_pnas_svd}  \caption
%   { \small Scree plot of the first 30 singular values of  the debiased variant of $\sum_{\ell}A^2_{\ell}$. (Left) The singular
%   values themselves. (Right) The difference between two consecutive singular values, divided by the smaller singular value.
%   In both plots, the dotted line separates the values corresponding to the first $K=8$ singular values to the remaining singular values.}
%    \label{fig:Writeup4_pnas_svd}
%\end{figure}
We perform our bias-adjusted spectral clustering on this matrix with $K=8$, and visualize
three out of the ten adjacency matrices using the estimated communities in \Cref{fig:pnas_adj} (which are the full adjacency matrices corresponding to the
three adjacency matrices shown in \Cref{fig:postred}). 
We see that as development occurs from 40 days in the embryo to 48 months after birth, there are different gene communities that are most-connected. This visually demonstrates different biological processes in brain tissue that are
most active at different stages of development. Labeling the communities 1 through 8 from top left to  bottom right,
our results show that starting at 40 days in the embryo, Community 1 is highly coordinated (i.e., densely connected),
and ending at 48 months after birth, Community 7 is highly coordinated. 
All the genes in Community 8 are sparsely connected
throughout all ten adjacency matrices, suggesting that these genes 
%are highly expressed but 
are not strongly correlated with
many other genes throughout development. 

\begin{figure}[t]
  \centering
   \includegraphics[width=0.32\textwidth]{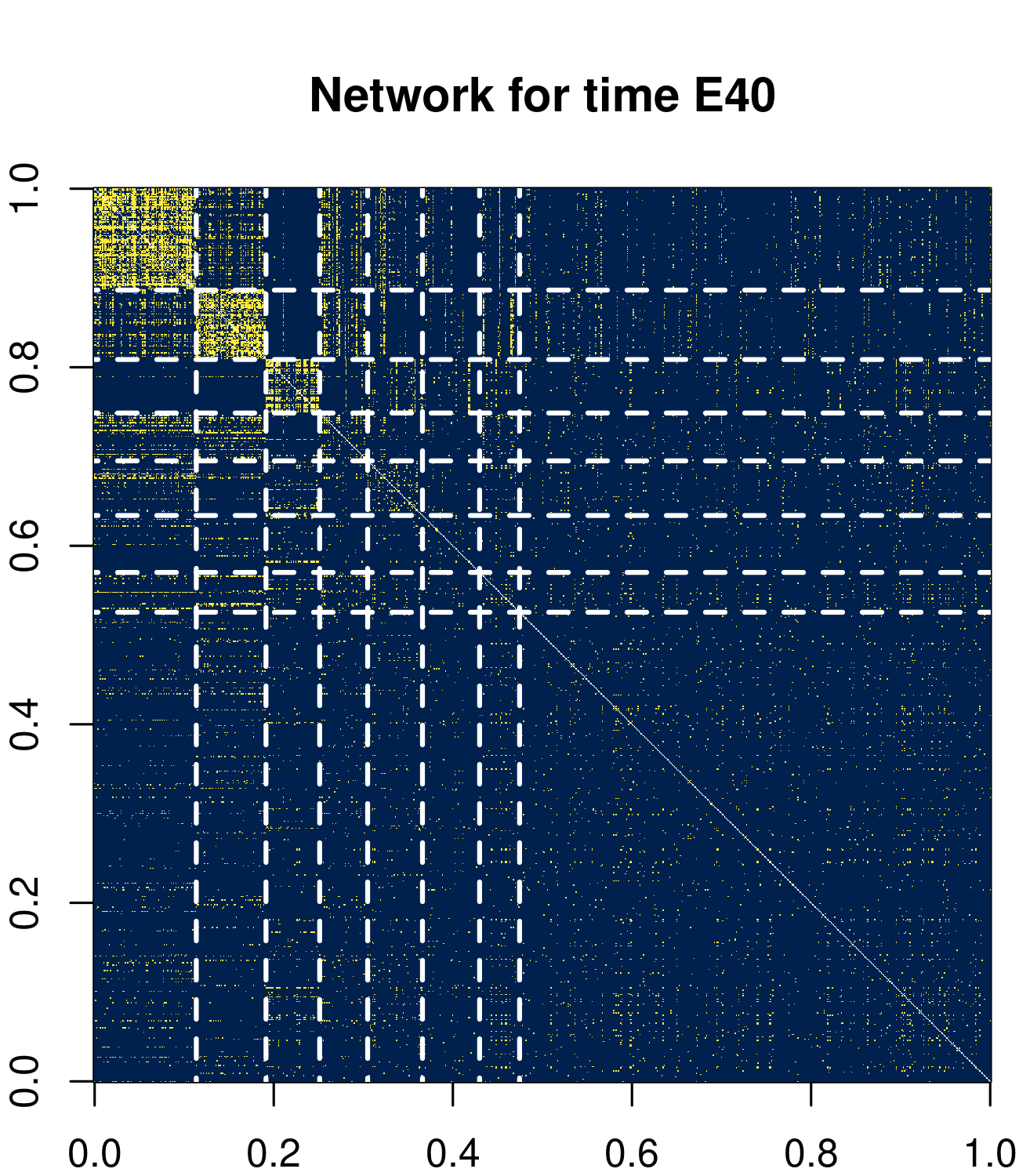}  
      \includegraphics[width=0.32\textwidth]{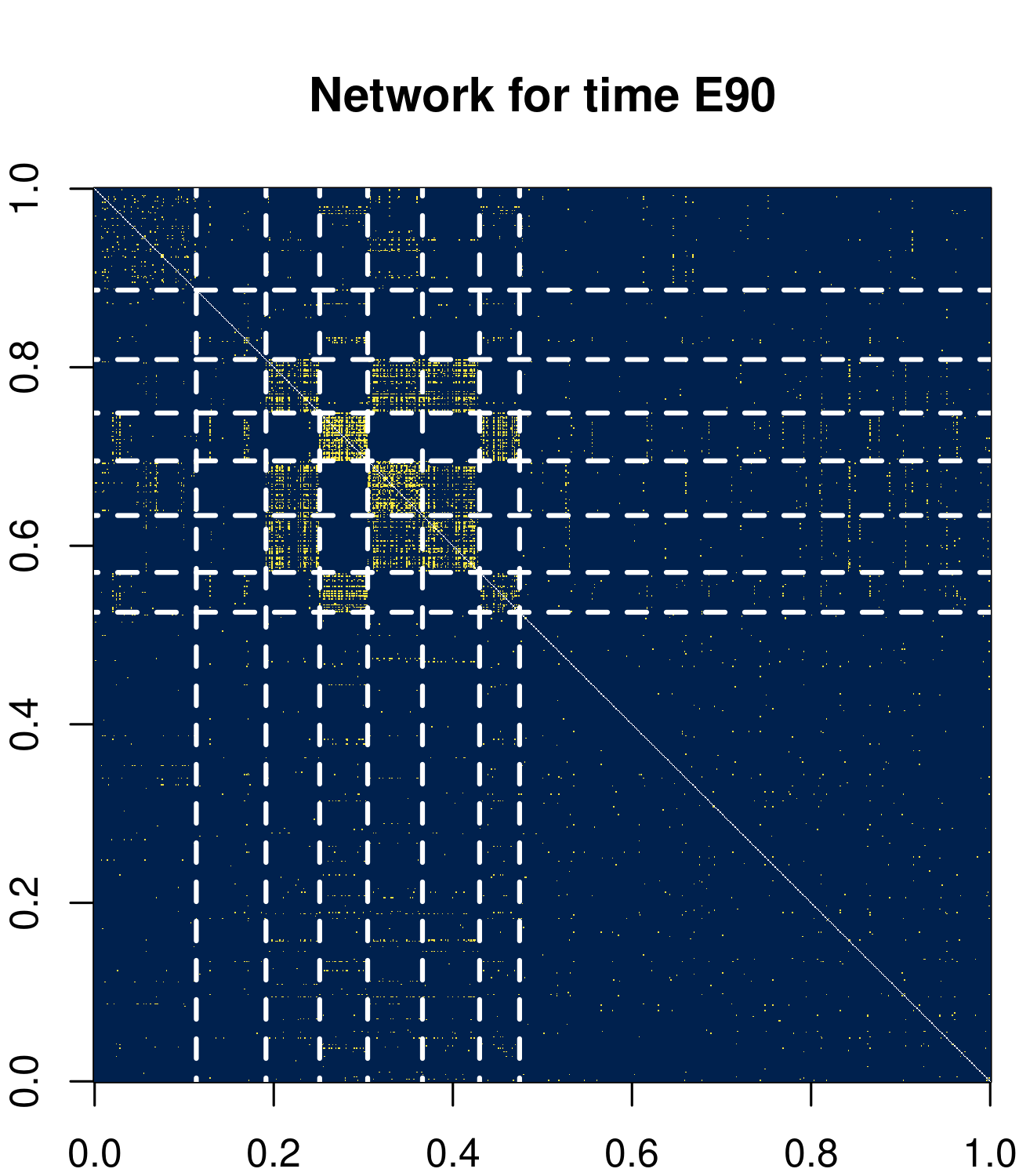}
         \includegraphics[width=0.32\textwidth]{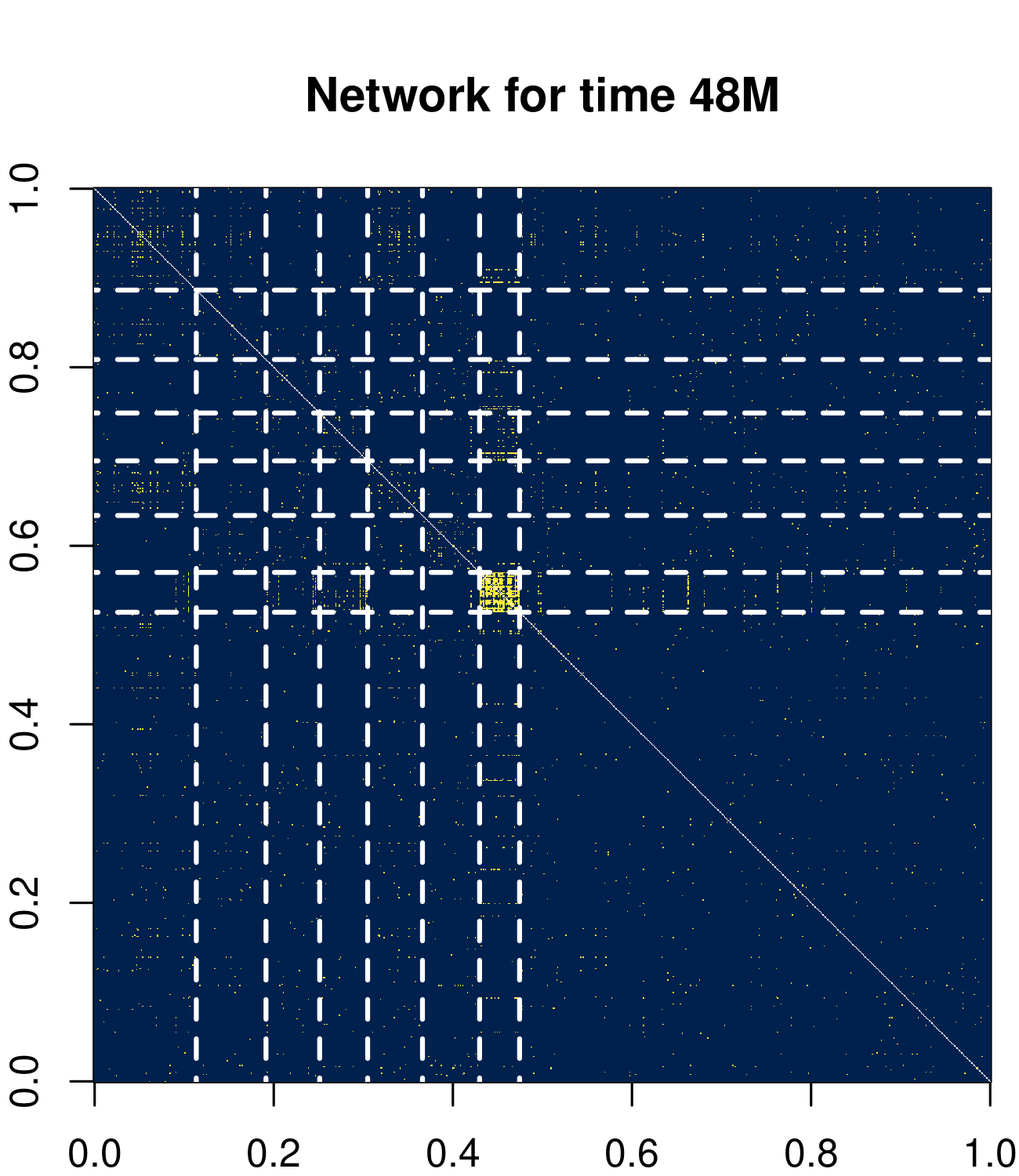}   \caption
   { \small Three of ten adjacency matrices where the genes are ordered according to the estimated $K=8$ communities. Blue pixels correspond to the absence of an edge between the corresponding genes in $A_{\ell}$'s, while yellow pixels correspond to
   an edge. The dashed white lines denote the separation among the $K=8$ gene communities. The adjacency matrices shown in \Cref{fig:postred} correspond to the same three developmental times 
  (from left to right), and are formed by selecting only the genes in Communities 1, 4, 5, and 7.
  %   (Top left): $A_1$, corresponding the adjacency matrix for the gene co-expression matrix for monkey
%   brain tissue 40 days in the embryo. (Top right) $A_3$, corresponding to brain tissue 70 days in the embryo.
%   (Bottom left): $A_6$, corresponding to brain tissue 120 days in the embryo. (Bottom right): $A_{10}$, corresponding
%   to brain tissue 48 months after birth.
    }
    \label{fig:pnas_adj}
\end{figure}

To interpret these $K=8$ communities, we perform a gene ontology analysis, using the \texttt{cluster}- \texttt{Profiler::enrichGO} function on the gene annotation in the Bioconductor
package \texttt{org.Mmu.eg.db} to analyze the scientific
interpretation of each of the $K$ communities of genes within rhesus monkeys.  \Cref{tab:go} shows the results. We see the first seven communities are highly enriched for cell processes closely related to
brain development -- we can interpret \Cref{fig:pnas_adj} and \Cref{tab:go} together as which biological systems are most active in a coordinated fashion at different developmental stage. 
Since genes in the eighth community are sparsely-connected across all developmental time and is not enriched for any cell processes, we infer that these genes are unlikely to be coordinated to drive any process related to brain development.
Together, these results demonstrate that the bias-adjusted sum-of-squared spectral clustering is able to find meaningful gene communities.
Visualizations of all ten adjacency matrices, beyond those shown in \Cref{fig:pnas_adj} and explicit reporting of the edge densities,
as well as stability analyses that demonstrate how the results vary when different tuning parameters are used, are included in \Cref{app:additional_data}.

\begin{table}[t]
\centering
 \begin{tabular}{|c | c c c|} 
 \hline
Community & Description & GO ID  & p-value \\ [0.5ex] 
 \hline\hline
1 & RNA splicing & GO:0008380 & $1.07\times 10^{-11}$  \\ 
 2 & Nuclear transport & GO:0051169 & $3.15\times 10^{-5}$ \\
 3 & Neuron development & GO:0048666 & $2.08\times 10^{-8}$  \\
 4 & Chromosome segregation & GO:0007059 & $1.31 \times 10^{-8}$  \\
 5 & Neuron projection development & GO:0031175 & $1.51 \times 10^{-5}$ \\ 
  6 & Regulation of transporter activity & GO:0032409 & $5.68\times 10^{-6}$ \\ 
   7 & Anchoring junction & GO:0070161 & $8.86 \times 10^{-5}$ \\ 
     8 & None &  &  \\ 
 [1ex] 
 \hline
\end{tabular}
 \caption{\small \label{tab:go} Gene ontology of the estimated $K=8$ communities of genes. Here, ``GO'' denotes the gene ontology ID, and ``p-value''
denotes the Fisher's exact test to denote an enrichment (i.e., significance or over-representation) of a particular GO for the genes in said community compared to all other genes.} 
\end{table}

\section{Discussion}\label{sec:disc}
While we establish community estimation consistency in this paper,
there are two major additional theoretical directions we hope our results will help
shed light into for future work.
First, an important theoretical question in the study of stochastic block models is the critical threshold for community estimation.  This involves finding a critical rate of the overall edge density and/or the separation between rows of $B_{\ell,0}$, and proving achievability of certain community estimation accuracy when the density and/or separation are above this threshold, as well as impossibility for non-trivial community recovery below this threshold. For single-layer SBMs, this problem has been studied by many authors, such as \cite{Massoulie14}, \cite{AbbeS15}, \cite{ZhangZ16}, and \cite{Mossel18}.  The case of multi-layer SBMs is much less clear, especially for generally structured layers.  The upper bounds proved in \cite{PaulC17} and \cite{Sharmo18} imply achievability of vanishing error proportion when $Ln\rho\rightarrow\infty$ under a layer-wise positivity assumption.  Our results requires a stronger $L^{1/2}n\rho/\log^{1/2}(L+n)\rightarrow\infty$ condition, but does not require a layer-wise positivity assumption. Ignoring logarithmic factors, is a rate of $L^{1/2}$ the right price to pay for not having the layer-wise positivity assumption?  The error analysis in the proof of \Cref{thm:consistency} seems to suggest a positive answer, but a rigorous claim will require a formal lower bound analysis. We note that the simplified constructions such as that in \cite{ZhangZ16} designed for
single-layer SBMs are unlikely to work, since they do not reflect the additional hardness brought to the estimation problem by unknown layer-wise structures.

Second, the consistency result for multi-layer SBMs also makes it possible to extend other inference tools developed for single-layer data to multi-layer data.  One such example is model selection and cross-validation \citep{ChenL18,LiLZ16}.
The probability tools developed in this paper, such as \Cref{lem:mat_prod,thm:quad_sum,thm:asym_quad}, may be useful for other statistical inference problems involving matrix-valued measurements and noise.  For example, our theoretical analyses could  refine the theoretical analyses for multilayer graphs that go beyond SBMs, such as degree-corrected SBMs or random dot-product graphs in general \citep{nielsen2018multiple,arroyo2019inference}.
Alternatively, in dynamic networks where the network parameters change smoothly over time, one may use nonparametric kernel smoothing techniques in \cite{PenskyZ19} and the matrix concentration inequalities developed in this paper to control the aggregated noise and perhaps obtain more refined analysis in those settings.

\bibliographystyle{asa}
\bibliography{SumSqSpec}

\newpage

\appendix

%!TEX root = ./mat_quad_asa.tex
\section{Proofs for general concentration results}\label{app:1}

\textbf{Notation.}
For a matrix $M$, let $M_{j\cdot}$ denote its $j$th row in the form of a column vector. Also, we define $\|M\|_{q,\infty}=\max_j\|M_{j\cdot}\|_q$ for $q\in [1,\infty)$, and $\|M\|_\infty$ is the maximum entry-wise absolute value. When $M$ is symmetric with eigen-decomposition $\sum_j \lambda_j u_j u_j^T$, let $|M|=\sum_j |\lambda_j| u_j u_j^T$. Let $e_i$ be the $i$-th coordinate unit vector, the length of $e_i$ will depend on the context. For two symmetric matrices $A$ and $B$, $A\preceq B$ means that $B-A$ is positive semidefinite.  In the statement of the theorems and their proofs, we use $C$ to denote a universal constant whose value may vary from line to line but does not depend on any of the model parameters. 
Throughout this entire paper, we reserve $i,j \in \{1,\ldots,n\}$ as indices for individual nodes, while we reserve $\ell \in \{1,\ldots,L\}$ as the index for individual layers. For a square matrix $A$, let $A_{ij}^2$ and $[A^2]_{ij}$ refer to the square of the $(i,j)$-entry of $A$ and $A^2$ respectively. For two random sequences $X_n$ and $Y_n$, we write $X_n = O_P(Y_n)$ and $X_n=o_P(Y_n)$ to denote $X_n/Y_n$ is asymptotically bounded in probability or converging to 0 in probability respectively. Let $I_K$ denote an identity matrix of size $K$.

\subsection{Rank-one dilation}
Our proof of \Cref{lem:mat_prod} uses a rank-one dilation trick to handle the asymmetry in $X_\ell H_\ell$.

\begin{definition}[Symmetric dilation]
  For an $n\times m$ matrix $A$, the symmetric dilation of $A$, denoted by $\mathcal D(A)$, is the $(n+m)\times (n+m)$ symmetric matrix
  $$
  \mathcal D(A)=\left[\begin{array}{cc}
    0 & A \\ A^T & 0
  \end{array}\right]\,.
  $$
\end{definition}

The symmetric dilation is a convenient tool to reduce singular values and singular vectors of asymmetric matrices to eigenvalues and eigenvectors of symmetric matrices.  See Exercise II.1.15 of \cite{Bhatia} and Section 2.6 of \cite{Tropp12} for example.  Here, we will use a special case of rank-one dilations whose proof is elementary and omitted.
\begin{lemma}[Rank-one dilation]\label{lem:rank1_dil}
  For two column vectors $e$ and $a$, $\mathcal D(ea^T)$ has eigen-decomposition
  $$
  \mathcal D(e a^T)=\|e\|\|a\|\left(\frac{1}{\sqrt{2}}\left[\begin{array}{cc}
    \frac{e}{\|e\|}\\
    \frac{a}{\|a\|}
  \end{array}\right]\frac{1}{\sqrt{2}}\left[\begin{array}{cc}
    \frac{e}{\|e\|}\\
    \frac{a}{\|a\|}
  \end{array}\right]^T-\frac{1}{\sqrt{2}}\left[\begin{array}{cc}
    \frac{e}{\|e\|}\\
    -\frac{a}{\|a\|}
  \end{array}\right]\frac{1}{\sqrt{2}}\left[\begin{array}{cc}
    \frac{e}{\|e\|}\\
    -\frac{a}{\|a\|}
  \end{array}\right]^T\right)
  $$
and for each integer $k\ge 2$
$$
\left|\mathcal D(e a^T)^k\right|\preceq \|e\|^{k-2}\|a\|^{k-2} \left[\begin{array}{cc}
  \|a\|^2ee^T & 0\\ 0  & \|e\|^2 a a^T
\end{array}\right]\,.
$$
\end{lemma}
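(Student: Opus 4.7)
The plan is to prove the spectral decomposition by exhibiting two explicit eigenvectors, and then read off the bound on $|\mathcal{D}(ea^T)^k|$ from that decomposition by direct block computation.

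First I would set $u = e/\|e\|$ and $v = a/\|a\|$ and consider the candidate vectors $w_{\pm} = \tfrac{1}{\sqrt{2}}[u^T,\ \pm v^T]^T$. Block-multiplying $\mathcal{D}(ea^T)$ against $w_{\pm}$ and using $a^T v = \|a\|$, $e^T u = \|e\|$ gives $\mathcal{D}(ea^T) w_{\pm} = \pm \|e\|\|a\|\, w_{\pm}$. Because $\mathcal{D}(ea^T)$ has rank at most $2$, all other eigenvalues vanish, so its spectral decomposition is exactly $\|e\|\|a\|\bigl(w_+ w_+^T - w_- w_-^T\bigr)$, which is the first display in the lemma.

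Second, raising this decomposition to the $k$th power gives $\mathcal{D}(ea^T)^k = (\|e\|\|a\|)^k w_+ w_+^T + (-\|e\|\|a\|)^k w_- w_-^T$, and taking matrix absolute values in the symmetric sense (which flips the sign of the negative eigencomponent) yields $|\mathcal{D}(ea^T)^k| = (\|e\|\|a\|)^k (w_+ w_+^T + w_- w_-^T)$. A direct $2\times 2$ block expansion of $w_+ w_+^T + w_- w_-^T$ shows that the off-diagonal blocks cancel because of the sign flip in $w_-$, leaving the block-diagonal matrix $\mathrm{diag}(uu^T, vv^T)$. Rewriting $uu^T = ee^T/\|e\|^2$ and $vv^T = aa^T/\|a\|^2$ and collecting the factors of $\|e\|$ and $\|a\|$ produces the right-hand side of the displayed inequality as an equality, which in particular implies the claimed $\preceq$.

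There is essentially no obstacle here: the entire argument is linear-algebra bookkeeping and invokes no probabilistic tool or matrix Bernstein-type inequality. The only notational point worth flagging is the definition of $|M|$ for a symmetric $M$, but this is exactly the one spelled out in the notation paragraph of the appendix, so the final step is automatic once the spectral form is in hand.
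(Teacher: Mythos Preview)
Your proposal is correct; in fact you obtain the second display with equality, which is stronger than the stated $\preceq$. The paper itself omits the proof entirely, calling it ``elementary,'' so your direct verification via the explicit eigenvectors $w_\pm$ and block expansion of $w_+w_+^T+w_-w_-^T$ is exactly the kind of argument the authors had in mind.
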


\subsection{Proof of \Cref{lem:mat_prod}}
\begin{proof}[Proof of \Cref{lem:mat_prod}]
  We will prove the asymmetric case first. The symmetric case follows by consider upper and lower diagonal of $X_\ell$ separately and use union bound.
  
  First consider the case of a single pair of $(X,H)$, where $X$ is $n\times r$ with independent $(v_1,R_1)$-Bernstein entries, and $H$ is $r\times m$.
  Then
  \begin{align*}
    XH = \sum_{1\le i\le n,1\le j\le r} X_{ij} e_i H_{j\cdot}^T\,.
  \end{align*}
By \Cref{lem:rank1_dil} we have
\begin{align*}
\left|\mathbb E\left[\mathcal D(X_{ij}e_i H_{j\cdot}^T)\right]^k\right|\preceq & \mathbb E|X_{ij}|^k \left|\left[\mathcal D(e_i H_{j\cdot}^T)\right]^k\right|\\
\preceq& (v_1/2) k! R_1^{k-2} \|H_{j\cdot}\|^{k-2} \left[\begin{array}{cc}
  \|H_{j\cdot}\|^2 e_i e_i^T & 0 \\ 0 & H_{j\cdot} H_{j\cdot}^T
\end{array}\right]
\end{align*}
Now take the sum over $i \in \{1,\ldots,n\}$ and $j \in \{1,\ldots,r\}$.
\begin{align}\label{eq:dilation_ineq_gen}
\sum_{1\le i\le n, 1\le j\le r} \left[\begin{array}{cc}
  \|H_{j\cdot}\|^2 e_i e_i^T & 0 \\ 0 & H_{j\cdot} H_{j\cdot}^T
\end{array}\right]
= \left[\begin{array}{cc}\|H\|_F^2 I_n & 0 \\ 0 & n H^T H \end{array}\right]\,.
\end{align}
% The right hand side of \eqref{eq:dilation_ineq_gen} has operator norm bounded by $n\|H^T H\|$.
By Theorem 6.2 of \cite{Tropp12}, 
  % So we have
  % $$
  % \mathbb P\left[\|XH\|\ge t\right] \le (n+\ell) \exp\left(-\frac{t^2/2}{v n \|H\|^2 + R\max_j\|h_{j\cdot}\|t}\right)
  % $$
we have
\begin{align*}
  \mathbb P\big[\left\|\mathcal D\left(XH\right)\right\|\ge t\big]
  \le 2(m+n)\exp\left(-\frac{t^2/2}{v_1 (n \|H^T H\|\vee \|H\|_F^2)+R_1\|H\|_{2,\infty}t}\right)\,.
\end{align*}
 The proof for the case of sum $\sum_\ell X_\ell H_\ell$ follows by modifying the above argument where the summation in \eqref{eq:dilation_ineq_gen} takes another outer layer of summation over $\ell$ and becomes
$$
\sum_{\ell=1}^{L}\sum_{1\le i\le n, 1\le j\le r} \left[\begin{array}{cc}
  \|H_{j\cdot}\|^2 e_i e_i^T & 0 \\ 0 & H_{j\cdot} H_{j\cdot}^T
\end{array}\right]
\preceq \left[\begin{array}{cc}\sum_\ell\|H_\ell\|_F^2 I_n & 0 \\ 0 & n \sum_\ell H_\ell^T H_\ell \end{array}\right]\,.
$$

To prove the result for the symmetric case, let $X_\ell^{(u)}$ be the diagonal and upper-diagonal part of $X_\ell$, and $X_{\ell}^{(l)}=X_\ell-X_\ell^{(u)}$. The claim follows by upper bounding $\mathbb P(\|\sum_\ell X_\ell^{(u)} H_\ell\|\ge t/2)$ and $\mathbb P(\|\sum_\ell X_\ell^{(l)} H_\ell\|\ge t/2)$ using the asymmetric result, and combining with union bound.
\end{proof}

\begin{proof}[Proof of \Cref{thm:quad_sum}]
The proof uses decoupling. Let  $\tilde X_\ell$ be an independent copy of $X_\ell$.  Define
$$
\tilde S = \sum_{\ell=1}^{L} X_\ell G_\ell \tilde X_\ell^T\,,
$$
\begin{align*}
\tilde S_2=&\Big[\sum_{\ell=1}^{L} \sum_{1\le i < j\le n} X_{\ell,ij}\tilde X_{\ell,ij}\left(e_ie_i^TG_{\ell,jj}+e_je_j^T G_{\ell,ii}+e_ie_j^TG_{\ell,ji}+e_je_i^T G_{\ell,ij}\right)\Big]\\
&+\Big[\sum_{\ell=1}^{L}\sum_{1\le i\le n}X_{\ell,ii}\tilde X_{\ell,ii} e_i e_i^T G_{\ell,ii}\Big]\,,
\end{align*}
and
$$
\tilde S_1=\tilde S-\tilde S_2\,.
$$
Now we expand $S_1$:
\begin{align*}
  S_1= & S-S_2 \\
  =&
  \Big[\sum_{\ell=1}^{L}\sum_{\substack{1\le i < j\le n \\ 1\le i'<j'\le n \\ (i,j)\neq (i',j')}}
   X_{\ell,ij}X_{\ell,i'j'}
   \left(e_i e_{i'}^T G_{\ell,jj'}+e_je_{j'}^TG_{\ell,ii'}+e_ie_{j'}^TG_{\ell,ji'}+e_je_{i'}^TG_{\ell,ij'}\right)\Big]\\
    &+ \Big[\sum_{\ell=1}^{L}\sum_{\substack{1\le i\le n \\ 1\le i' < j'\le n}} X_{\ell,ii} X_{\ell,i'j'}(e_i e_{i'}^T G_{\ell,ij'}+e_i e_{j'}^T G_{\ell,ii'})\Big]\\
    &+\Big[\sum_{\ell=1}^{L}\sum_{\substack{1\le i<j\le n \\ 1\le i'\le n}} X_{\ell,ij} X_{\ell,i'i'} (e_i e_{i'}^T G_{\ell,ji'}+e_j e_{i'}^T G_{\ell,ii'})\Big]\\
    &+ \Big[\sum_{\ell=1}^{L}\sum_{1\le i\neq i'\le n} X_{\ell,ii} X_{\ell,i'i'} e_i e_{i'} G_{\ell,ii'}\Big]\,,
\end{align*}
which can be viewed as a matrix-valued U-statistic defined on the vectors $X_{1,ij}, \ldots,X_{L,ij}$ indexed by pairs $(i,j)$ such that $1\le i < j\le n$.  Using the decoupling inequality (Theorem 1 of \cite{PenaM95}) we have
\begin{equation}\label{eq:decoupling}
\mathbb P(\|S_1\|\ge t)\le C_2 \mathbb P(\|\tilde S_1\|\ge t/C_2)  
\end{equation}
for some universal constant $C_2$ and all $t>0$.

The plan is to control $\|\tilde S_1\|$ by
$\|\tilde S_1\|\le \|\tilde S\| + \|\tilde S_2\|$, where we analyze each of the left-hand terms separately in the following two steps.

\paragraph{Step 1: Controlling $\tilde S$.}

Let $H_\ell=G_\ell\tilde X_\ell$.
In order to apply \Cref{lem:mat_prod} to control $\tilde S_2=\sum_\ell X_\ell H_\ell$ conditioning on $H_\ell$, we need to upper bound $$\left\|\sum_\ell H_\ell^T H_\ell\right\|=\left\|\sum_\ell \tilde X_\ell G_\ell^T G_\ell\tilde X_\ell^T\right\|$$ and $$\max_\ell \|H_\ell\|_{2,\infty}=\max_{\ell,j} \|\tilde X_\ell G_{\ell,j\cdot}\|\,.$$

We first consider $\left\|\sum_\ell H_\ell^T H_\ell\right\|$.
With high probability over $\tilde X_\ell$, we have
\begin{align}
  \left\|\sum_{\ell=1}^{L} H_\ell^T H_\ell\right\|=&\left\|\sum_{\ell=1}^{L} \tilde X_\ell G_\ell^T G_\ell\tilde X_\ell^T\right\|\nonumber\\
  \le & \sum_{\ell=1}^{L} \|\tilde X_\ell G_\ell^T G_\ell \tilde X_\ell^T\|\nonumber\\
  \le & \sum_{\ell=1}^{L} \|\tilde X_\ell G_\ell^T\|^2\nonumber\\
  \lesssim & \sum_{\ell=1}^{L}\left[v_1 n \log (L+n) \|G_\ell\|^2 + R_1^2 \|G_\ell^T\|_{2,\infty}^2\log^2(L+n)\right]\nonumber\\
  = & v_1 n \log (L+n)\sum_{\ell=1}^{L} \|G_\ell\|^2 + R_1^2\log^2(L+n)\sum_{\ell=1}^{L}\|G_\ell^T\|_{2,\infty}^2\nonumber\\
  \le & v_1 n  \log(L+n)\sigma_1^2+R_1^2 L \log^2(L+n)\sigma_2^2\,,\label{eq:HTH}
\end{align}
where the fourth line follows from applying \Cref{lem:mat_prod} to each individual $\tilde X_\ell G_\ell^T$ with union bound over $\ell$ and the fact that the entries of $\tilde X_\ell$ are $(v_1,R_1)$-Bernstein.
  
Now we turn to $\max_\ell \|H_\ell\|_{2,\infty}$.  
Applying \Cref{lem:mat_prod} to $\tilde X_\ell G_{\ell,j\cdot}$ and takinga  union bound over $j\in\{1,\ldots,n\}$ and $\ell \in \{1,\ldots,L\}$ we get with high probability,
\begin{align*}
\max_{\ell,j} \|H_{\ell}\|_{2,\infty} \lesssim  & \sqrt{v_1}\sqrt{n\log (L+n)} \max_\ell\|G_\ell\|_{2,\infty}+R_1\max_\ell\|G_\ell\|_\infty \log (L+n)\\
\lesssim &  \sqrt{v_1}\sqrt{n\log (L+n)} \sigma_2+R_1\log (L+n)\sigma_3\,.  
\end{align*}

Intersecting on these two events above and applying \Cref{lem:mat_prod}, we conclude with high probability
\begin{align}
\|\tilde S\|\lesssim &  \sqrt{v_1}\sqrt{n \log (L+n)}\left[v_1 n \log (L+n)\sigma_1^2 + R_1^2\log^2(L+n)L \sigma_2^2\right]^{1/2}\nonumber\\
&+ R_1\log (L+n)\left[\sqrt{v_1}\sqrt{n\log (L+n)} \sigma_2+R_1 \log (L+n)\sigma_3\right]\nonumber\\
\lesssim & v_1 n  \log (L+n)\sigma_1 + \sqrt{v_1} R_1 \sqrt{n}\sqrt{L}\log^{3/2}(L+n)\sigma_2+R_1^2 \log^2(L+n)\sigma_3\,.\label{eq:tilde_s_bound_proof}
\end{align}

\paragraph{Step 2: Controlling $\tilde S_2$.}

Let $Z_{\ell,ij}=X_{\ell,ij}\tilde X_{\ell,ij}$. 
By construction, the off-diagonal part of $\tilde S_2$ is
$$
\sum_{\ell=1}^{L}\sum_{1\le i<j\le n} Z_{\ell,ij}(e_i e_j^T G_{\ell,ji}+e_j e_i^T G_{\ell,ij})\,.
$$
% It is tempting to apply the result of van Handel and Bandeira.  But the trouble is that we are summing over $\ell$ (breaks the infinity norm requirement in Corollary 3.12 and Remark 3.13). Also we will be allowing for Bernstein tails that do not satisfy the moment condition assumed there.

Consider the first component $\sum_{\ell}\sum_{1\le i<j\le n} Z_{\ell,ij}G_{\ell,ji}e_i e_j^T$.  \Cref{lem:rank1_dil} implies that
$$
\mathbb E\left[\left|\mathcal D(Z_{\ell,ij}G_{\ell,ji}e_i e_j^T)\right|^k\right]
\preceq \frac{v_2 G_{\ell,ji}^2}{2} |R_2 G_{\ell,ji}|^{k-2}\left[\begin{array}{cc}
  e_i e_i^T & 0 \\ 0 & e_j e_j^T
\end{array}\right]
$$
provided that $Z_{\ell,ij}$'s are $(v_2',R_2')$-Bernstein.
Summing over $\ell \in \{1,\ldots,L\}$ and $1\leq i < j \leq n$, we obtain
\begin{align*}
\sum_{\ell,i<j} \mathbb E\left[\left|\mathcal D(Z_{\ell,ij}G_{\ell,ji}e_i e_j^T)\right|^k\right]
\preceq & \frac{v_2'}{2} |R_2' \max_{\ell,ij}G_{\ell,ji}|^{k-2}\max\left\{\max_i \sum_{\ell,j}G_{\ell,ji}^2,\max_{j}\sum_{\ell,i}G_{\ell,ji}^2\right\}I_{2n}\\
\preceq &\frac{v_2'}{2} |R_2' \sigma_3|^{k-2}L\sigma_2^2 I_{2n}\,.
\end{align*}
Then with high probability the off-diagonal part of $\tilde S_2$ is bounded by
\begin{equation}\label{eq:tilde_s2_main}
\sqrt{v_2'L\log(L+n)}\sigma_2+R_2'\log (L+n) \sigma_3\,.
\end{equation}

For the diagonal part of $\tilde S_2$, the $i$th diagonal entry is
$$
\sum_{\ell=1}^{L}\sum_{j=1}^{n} Z_{\ell,ij} G_{\ell,jj}.
$$
Then the operator norm of diagonal part of $\tilde S_2$ is bounded by its maximum entry, which is further bounded by, using standard Bernstein's inequality
\begin{align}
&\sqrt{v_2'\log (L+n)}\left(\sum_{\ell,j}G_{\ell,jj}^2\right)^{1/2}+R_2'\max_{\ell,j}|G_{\ell,jj}|\log (L+n)\nonumber\\
\le&\sqrt{v_2'\log(L+n)}\sigma_2'+R_2'\log(L+n)\sigma_3\,.\label{eq:tilde_s2_diag}
\end{align}

Now \eqref{eq:bound_on_S1} follows by combining \eqref{eq:tilde_s_bound_proof}, \eqref{eq:tilde_s2_main}, and \eqref{eq:tilde_s2_diag} together with the decoupling inequality \eqref{eq:decoupling}.

The claim regarding $S-\mathbb E S$ only requires an additional bound on
$\|S_2-\mathbb E S_2\|$, which can be obtained using an identical argument to that of $\tilde S_2$ with $(v_2, R_2)$ replacing $(v_2', R_2')$.
\end{proof}

\subsection{Matrix quadratic forms: The asymmetric case}\label{sec:asymmetric}
Let $X_1,\ldots,X_L$ be independent $n\times r$ matrices with independent zero mean entries. Let $G_\ell$ be each an $r\times r$ matrix.
The decomposition of the quadratic form now becomes simpler,
\begin{align*}
  S=\sum_{\ell=1}^{L} X_\ell G_\ell X_\ell^T=S_1+S_2\,,% \\
%    =&\sum_\ell\sum_{1\le i,i'\le n,1\le j,j'\le r} X_{\ell,ij}X_{\ell,i'j'} e_i e_{i'}^T G_{\ell,jj'} \\
%    =&\sum_\ell\sum_{1\le i\le n,1\le j\le r} X_{\ell,ij}^2 e_i e_i^T G_{\ell,jj}+\sum_{(i,j)\neq (i',j')}X_{\ell,ij}X_{\ell,i'j'} e_i e_{i'}^T G_{\ell,jj'}\\
%    =&S_2+S_1\,.
\end{align*}
where
$$
S_1=\sum_{\ell=1}^{L}\sum_{(i,j)\neq (i',j')}X_{\ell,ij}X_{\ell,i'j'} e_i e_{i'}^T G_{\ell,jj'}
$$
is the mean-zero off-diagonal part and
$$
S_2=\sum_{\ell=1}^{L}\sum_{1\le i\le n,1\le j\le r} X_{\ell,ij}^2 e_i e_i^T G_{\ell,jj}\,,
$$
is the diagonal part with possibly non-zero expected values on the diagonal entries.

Define
$$
\sigma_1'=\left(\sum_{\ell=1}^{L} \|G_\ell\|_F^2\right)^{1/2}\,.
$$

\begin{theorem}\label{thm:asym_quad}
If $X_1,\ldots,X_L$ are $n\times r$ independent matrices with independent entries satisfying \Cref{ass:bern1} and \Cref{ass:bern2p}, then with probability at least $1-O((L+n)^{-1})$,
  \begin{align}
    \|S_1\|
    \le & C\bigg[
    v_1n \log(L+n)\sigma_1+v_1\sqrt{n}\log(L+n)\sigma_1'+\sqrt{v_1}R_1\sqrt{nL}\log^{3/2}(L+n)\sigma_2\nonumber\\
&\quad+\sqrt{v_2'}\log(L+n)\sigma_2'+(R_1^2+R_2')\log^2(L+n)\sigma_3
    \bigg]\,,\label{eq:S1_asym_small_n}
  \end{align}
  for some constant $C$.
If in addition \Cref{ass:bern2} holds, then with probability at least $1-O((L+n)^{-1})$,
\begin{align}
  \|S_2-\mathbb E S_2\|\le C\left[\sqrt{v_2\log(L+n)}\sigma_2'+R_2\log(L+n)\sigma_3\right]\,.
\end{align}
\end{theorem}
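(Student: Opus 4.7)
My plan mirrors the proof of Theorem~\ref{thm:quad_sum}, taking advantage of the simpler combinatorial structure of the asymmetric setting: since there is no pairing between $(i,j)$ and $(j',i')$, the component $S_2$ is purely diagonal in $e_i e_i^T$, which eliminates several contributions that arise in the symmetric case.

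First, apply the matrix-valued $U$-statistic decoupling inequality already invoked in the proof of Theorem~\ref{thm:quad_sum}. Let $\tilde{X}_1,\ldots,\tilde{X}_L$ be independent copies of $X_1,\ldots,X_L$, set $\tilde{S}=\sum_\ell X_\ell G_\ell \tilde{X}_\ell^T=\tilde{S}_1+\tilde{S}_2$ with
\[
\tilde{S}_2=\sum_{\ell=1}^{L}\sum_{1\le i\le n,\, 1\le j\le r} X_{\ell,ij}\tilde{X}_{\ell,ij}\, e_i e_i^T G_{\ell,jj}
\]
the diagonal part. Decoupling yields $\mathbb{P}(\|S_1\|\ge t)\le C_2\,\mathbb{P}(\|\tilde{S}_1\|\ge t/C_2)$ for a universal $C_2$, so it suffices to bound $\|\tilde{S}_1\|\le \|\tilde{S}\|+\|\tilde{S}_2\|$.

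To control $\|\tilde{S}\|$, condition on $\tilde{X}_1,\ldots,\tilde{X}_L$ and apply Lemma~\ref{lem:mat_prod} with $H_\ell=G_\ell \tilde{X}_\ell^T$. This requires high-probability bounds on $\|\sum_\ell H_\ell^T H_\ell\|$, $\sum_\ell\|H_\ell\|_F^2$, and $\max_\ell\|H_\ell\|_{2,\infty}$. The first and third are obtained, as in the symmetric proof, by a second application of Lemma~\ref{lem:mat_prod} to each $\tilde{X}_\ell G_\ell^T$ and to each row $\tilde{X}_\ell G_{\ell,j\cdot}^T$ with union bounds over $\ell$ and $(\ell,j)$ respectively, producing the $v_1 n \log(L+n)\sigma_1$, $\sqrt{v_1}R_1\sqrt{nL}\log^{3/2}(L+n)\sigma_2$, and $R_1^2\log^2(L+n)\sigma_3$ contributions in \eqref{eq:S1_asym_small_n}. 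The second quantity is new to the asymmetric setting: since $\mathbb{E}\|G_\ell\tilde{X}_\ell^T\|_F^2\le v_1 n\|G_\ell\|_F^2$, entry-wise scalar Bernstein (treating each entry of $G_\ell\tilde{X}_\ell^T$ as a Bernstein sum) yields $\sum_\ell\|H_\ell\|_F^2\lesssim v_1 n(\sigma_1')^2\log(L+n)$ with high probability. Inserting this Frobenius bound into the $\vee$ in Lemma~\ref{lem:mat_prod}'s variance factor produces the additional $v_1\sqrt{n}\log(L+n)\sigma_1'$ term distinguishing the asymmetric bound from its symmetric counterpart.

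Controlling $\|\tilde{S}_2\|$ is simpler than in Theorem~\ref{thm:quad_sum}: since $\tilde{S}_2$ is diagonal, $\|\tilde{S}_2\|=\max_i|\sum_{\ell,j}X_{\ell,ij}\tilde{X}_{\ell,ij}G_{\ell,jj}|$, and Assumption~\ref{ass:bern2p} combined with scalar Bernstein and a union bound over $i$ give the $\sqrt{v_2'}\log(L+n)\sigma_2'+R_2'\log(L+n)\sigma_3$ contribution. No $\sqrt{L}\sigma_2$ term appears because the off-diagonal symmetric-pairing terms of the symmetric case are absent here. The bound on $\|S_2-\mathbb{E}S_2\|$ follows from the same scalar Bernstein argument applied to the $(v_2,R_2)$-Bernstein sequence $X_{\ell,ij}^2-\mathbb{E}X_{\ell,ij}^2$ under Assumption~\ref{ass:bern2}, again exploiting the diagonal structure of $S_2$.

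The main obstacle is verifying that the Frobenius-norm bound $\sum_\ell\|H_\ell\|_F^2\lesssim v_1 n(\sigma_1')^2\log(L+n)$ holds with the correct logarithmic dependence, since this is the sole source of the asymmetric-specific $v_1\sqrt{n}\log(L+n)\sigma_1'$ term in \eqref{eq:S1_asym_small_n}; all other steps are direct transcriptions of the symmetric argument.
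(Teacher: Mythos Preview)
Your overall structure---decouple, bound $\|\tilde S_1\| \le \|\tilde S\| + \|\tilde S_2\|$, handle $\tilde S_2$ by scalar Bernstein on the diagonal, handle $\tilde S$ by conditioning on $\tilde X_\ell$ and applying Lemma~\ref{lem:mat_prod}---matches the paper exactly, as does your treatment of $\tilde S_2$ and of $S_2-\mathbb E S_2$.

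The gap is in where the $\sigma_1'$ term comes from. You attribute it to $\sum_\ell\|H_\ell\|_F^2$ in the \emph{outer} application of Lemma~\ref{lem:mat_prod}, and flag controlling that Frobenius sum as the main obstacle. This is misdirected on two counts. First, since $\sum_\ell H_\ell^T H_\ell$ is an $n\times n$ positive semidefinite matrix with trace $\sum_\ell\|H_\ell\|_F^2$, one always has $n\|\sum_\ell H_\ell^T H_\ell\| \ge \sum_\ell\|H_\ell\|_F^2$; the Frobenius branch of the outer $\vee$ is never active here, so a separate bound on it adds nothing. Second, and more seriously, your bound on $\|\sum_\ell H_\ell^T H_\ell\|$ via $\sum_\ell\|\tilde X_\ell G_\ell^T\|^2$ ``as in the symmetric proof'' is incomplete: when you apply Lemma~\ref{lem:mat_prod} to a single $\tilde X_\ell G_\ell^T$ (with $\tilde X_\ell$ of size $n\times r$ and $G_\ell^T$ of size $r\times r$), the variance factor is $v_1\bigl(n\|G_\ell\|^2 \vee \|G_\ell\|_F^2\bigr)$. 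In the symmetric case $n=r$ forces $n\|G_\ell\|^2 \ge \|G_\ell\|_F^2$, but here $n$ may be smaller than $r$, and dropping the $\|G_\ell\|_F$ branch makes your bound on $\|H_\ell\|$---and hence on $\|\sum_\ell H_\ell^T H_\ell\|$---too small.

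The paper's fix is immediate: retain the Frobenius branch in this \emph{inner} application to get
\[
\|H_\ell\| \lesssim \sqrt{v_1\log(L+n)}\bigl(\sqrt{n}\,\|G_\ell\| \vee \|G_\ell\|_F\bigr) + R_1\log(L+n)\,\|G_\ell^T\|_{2,\infty},
\]
so that $\sum_\ell\|H_\ell\|^2 \lesssim v_1\log(L+n)\bigl(n\sigma_1^2 + (\sigma_1')^2\bigr) + R_1^2 L\log^2(L+n)\,\sigma_2^2$. This single estimate controls both $n\|\sum_\ell H_\ell^T H_\ell\|$ and $\sum_\ell\|H_\ell\|_F^2$ (each is at most $n\sum_\ell\|H_\ell\|^2$), and after the outer application of Lemma~\ref{lem:mat_prod} produces the $v_1\sqrt{n}\log(L+n)\,\sigma_1'$ term with no further work. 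Your self-identified obstacle thus dissolves once the Frobenius branch is kept at the correct step.
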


The proof follows largely the same scheme as in the symmetric case, with two notable differences.  First, in the asymmetric case $S_2$ only has diagonal entries. So the bounds for $S_2-\mathbb E S_2$ and $\tilde S_2$ only involve $\sigma_2'$ and $\sigma_3$.  Second, there is an additional term involving $\sigma_1'$ in the bound of $S_1$, which comes from the $n\|\sum_\ell H_\ell^T H_\ell\|\vee \sum_\ell \|H_\ell\|_F^2$ term in \Cref{lem:mat_prod}, because in the asymmetric case it is unclear whether the maximum is achieved by the operator norm part or the Frobenius norm part.

\begin{remark}\label{rem:asym_quad_n>r}
When $n\ge r$, we can drop the $\sigma_1'$ term and the high probability upper bound on $S_1$ can be reduced to 
 \begin{align}
    \|S_1\|
    \le & C\bigg[
    v_1n \log(L+n)\sigma_1+\sqrt{v_1}R_1\sqrt{nL}\log^{3/2}(L+n)\sigma_2\nonumber\\
&\quad+\sqrt{v_2'}\log(L+n)\sigma_2'+(R_1^2+R_2')\log^2(L+n)\sigma_3
    \bigg]\,.\label{eq:S1_asym_large_n}
  \end{align}  
\end{remark}

\begin{remark}\label{rem:comp_HansonWright}
In the special case of $L=1$, $n=1$, and $K$-sub-Gaussian entries, the proof of \Cref{thm:asym_quad} can be modified to show that
$$
\|S_1\|=O_P(K^2\|G\|_F)\,,
$$
which agrees with the Hanson--Wright inequality \citep{HansonW71,RudelsonV13}. 
\end{remark}

\begin{proof}[Proof of \Cref{thm:asym_quad}]
  Define $\tilde S$, $\tilde S_1$, $\tilde S_2$ accordingly. It is easy to check that $\tilde S_2$ only has diagonal entries and can be bounded by the same technique as in the symmetric case where
  \begin{equation}
  \|\tilde S_2\|\lesssim \sqrt{v_2'\log(L+n)}\sigma_2'+R_2'\log(L+n) \sigma_3\,.\label{eq:tilde_s2_asym}
  \end{equation}
  with high probability.

  For $\tilde S$, let $H_\ell=G_\ell \tilde X_\ell^T$, then
  with high probability
  $$
  \|H_\ell\|\lesssim \sqrt{v_1\log(L+n)}\left(\sqrt{n}\|G_\ell\|
  \vee \|G_\ell\|_F\right)+R_1\log(L+n)\|G_\ell\|_{2,\infty}\,.
  $$
and  \begin{align}
    \left\|\sum_{\ell=1}^{L} H_\ell^T H_\ell\right\|
    \lesssim & v_1\log(L+n) \left(n  \sigma_1^2 + (\sigma_1')^2\right)+R_1^2 L \log^2(L+n)\sigma_2^2\,,\label{eq:HTH_asym_small_n}
  \end{align}
The rest of the proof is the same as that of \Cref{thm:quad_sum}.
\end{proof}

\section{Proofs for the sparse Bernoulli case}\label{app:2}
The proof of \Cref{thm:s1_sparse_bernoulli} follows a similar idea to that of \Cref{thm:quad_sum}, which uses decoupling and reduces the problem to a linear combination in the form of $\sum_\ell X_\ell H_\ell$.  The proof here uses a refinement in constructing $H_\ell$ and controlling $\|\sum_\ell H_\ell^T H_\ell\|$ using properties of Bernoulli random variables.  The refinement involves carefully bounding the degrees of $A_\ell$, as well as $\sum_\ell A_\ell^2$, which is provided in \Cref{lem:counting}.

\begin{lemma} \label{lem:counting} Let $A_1,\ldots,A_L$ be independent adjacency matrices generated by a multi-layer SBM
satisfying the condition of \Cref{thm:consistency}. The following statements hold
simulatenously with probability at least $1-O((L+n)^{-1})$ for some universal constant $C$: 
  \begin{enumerate}
    \item $\max_{\ell,i}d_{\ell,i}\le C \log(L+n)$.
    \item $\max_{i}\sum_\ell d_{\ell,i}\le CLn\rho$.
    \item $\sum_{\ell,i}d_{\ell,i}\le C Ln^2\rho$.
    \item $\|\sum_{\ell}A_\ell^2\|\le C L n\rho$.
  \end{enumerate}
\end{lemma}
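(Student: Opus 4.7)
The plan is to establish the four items in order, using Bernstein's inequality together with union bounds for items~1--3, and a Perron--Frobenius style argument for item~4.

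For items~1--3, each quantity is a sum of independent Bernoulli random variables with parameters bounded by $\rho$, so the standard scalar Bernstein inequality applies directly. For item~1, each $d_{\ell,i}$ is a sum of $n-1$ Bernoullis and has mean at most $n\rho\le C_2$, so Bernstein gives $\mathbb{P}(d_{\ell,i}\ge C\log(L+n))\le(L+n)^{-cC}$; choosing $C$ large enough and taking a union bound over the at most $Ln\le(L+n)^2$ pairs $(\ell,i)$ yields the claim. For item~2, the sum $\sum_\ell d_{\ell,i}$ consists of $L(n-1)$ independent Bernoullis with total mean at most $Ln\rho$, and Bernstein yields a tail of order $\exp(-cLn\rho)$; the hypotheses $L^{1/2}n\rho\ge C_1\log^{1/2}(L+n)$ and $n\rho\le C_2$ together imply $Ln\rho\ge (C_1^2/C_2)\log(L+n)$, so this tail is $(L+n)^{-c'}$ after a union bound over the $n$ nodes. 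Item~3 follows by the same reasoning applied to $\sum_{\ell,i}d_{\ell,i}=2\sum_\ell\sum_{i<j}A_{\ell,ij}$, whose mean is of order $Ln^2\rho$.

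For item~4, my starting point is the bound $\|M\|\le\max_i\sum_j M_{ij}$, valid for any entrywise non-negative symmetric matrix $M$ by Perron--Frobenius. Applied to $M=\sum_\ell A_\ell^2$, this yields
\[
\Big\|\sum_\ell A_\ell^2\Big\|\;\le\;\max_i\sum_j\Big[\sum_\ell A_\ell^2\Big]_{ij}\;=\;\max_i\sum_\ell\sum_k A_{\ell,ik}\,d_{\ell,k}.
\]
Using $A_{\ell,ii}=0$ and the identity $A_{\ell,ik}A_{\ell,ki}=A_{\ell,ik}$, the inner row sum decomposes as $d_{\ell,i}+T_{\ell,i}$, where $T_{\ell,i}:=\sum_{k\ne i}\sum_{j\ne i,k}A_{\ell,ik}A_{\ell,kj}$ counts length-two walks from $i$ in layer~$\ell$ that do not return to $i$. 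Summed over $\ell$, the degree contribution is bounded by item~2, while the mean of $T_i:=\sum_\ell T_{\ell,i}$ is at most $Ln^2\rho^2\le C_2 Ln\rho$ by virtue of $n\rho\le C_2$.

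The main obstacle is controlling the deviation of $T_i$ from its mean at the scale $Ln\rho$. My plan is to condition on the ``second-half'' degrees $d_{\ell,k}^{(-i)}:=d_{\ell,k}-A_{\ell,ki}$, which depend only on edges not incident to $i$ and are therefore independent of $\{A_{\ell,ik}\}_{\ell,k\ne i}$. Conditional on these degrees, $T_i=\sum_{\ell,k\ne i}A_{\ell,ik}\,d_{\ell,k}^{(-i)}$ is a sum of independent Bernoullis with deterministic weights, so Bernstein's inequality applies. The conditional variance requires the auxiliary bound $\sum_{\ell,k}(d_{\ell,k}^{(-i)})^2\le CLn^2\rho$ with high probability, which I would obtain by applying Bernstein across the independent $\ell$-summands, each having mean $O(n^2\rho)$; combined with the maximum-weight bound from item~1 and the key absorption $n\rho\le C_2$, this yields a deviation of order $Ln\rho$. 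A final union bound over the $n$ choices of $i$ then completes the proof.
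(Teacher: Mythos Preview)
Your treatment of items~1--3 matches the paper's: scalar Bernstein plus union bound, using $n\rho\le C_2$ for item~1 and the consequence $Ln\rho\gtrsim\log(L+n)$ (from $L^{1/2}n\rho\ge C_1\log^{1/2}(L+n)$ together with $n\rho\le C_2$) for item~2.

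For item~4 you take a genuinely different route. The paper first splits $\sum_\ell A_\ell^2$ into its diagonal part (handled by item~2) and its off-diagonal part $S_{1,A}$, then applies the decoupling inequality of \cite{PenaM95} to replace $S_{1,A}$ by the off-diagonal of $\sum_\ell A_\ell\tilde A_\ell$ with $\tilde A_\ell$ an independent copy of $A_\ell$; only after this does it invoke Perron--Frobenius on the non-negative matrix $\sum_\ell A_\ell\tilde A_\ell$ and apply Bernstein conditionally on $(\tilde A_\ell)_\ell$. You instead apply Perron--Frobenius directly to $\sum_\ell A_\ell^2$ and break the dependence between $A_{\ell,ik}$ and $d_{\ell,k}$ by the elementary splitting $d_{\ell,k}=A_{\ell,ki}+d_{\ell,k}^{(-i)}$, conditioning on the sigma-field generated by edges not incident to $i$. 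This achieves the same decoupling effect without any external decoupling machinery and is more self-contained; the only extra cost is the union over $i$ of the conditioning events, which is harmless since those events are implied by items~1 and~3 applied to the full degrees. One small caveat: the auxiliary bound $\sum_{\ell,k}(d_{\ell,k}^{(-i)})^2\le CLn^2\rho$ is not quite what ``Bernstein across the $\ell$-summands'' delivers, because each layer summand $\sum_k(d_{\ell,k}^{(-i)})^2$ is a degree-two polynomial in the edges without a simple Bernstein tail. The straightforward substitute is $\sum_{\ell,k}(d_{\ell,k}^{(-i)})^2\le(\max_{\ell,k}d_{\ell,k})\sum_{\ell,k}d_{\ell,k}\le CLn^2\rho\log(L+n)$ via items~1 and~3, which carries one extra logarithm into the conditional variance. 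This is exactly the same variance bookkeeping the paper uses ($\rho\sum_{\ell,m}\tilde d_{\ell,m}^2\le C\rho\log(L+n)\cdot Ln^2\rho$), so your argument lands at the identical deviation bound $CL^{1/2}n\rho\log(L+n)$ and the final conclusion is unaffected.
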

\begin{proof}
  Part 1 follows from direct application of Bernstein's inequality and union bound,
  $$
  \mathbb P\left[d_{\ell,i}-n\rho \ge t\right]\le \exp\left(-\frac{t^2/2}{4n\rho+2t}\right)\,,
  $$
and use the assumption that $n\rho \le C_2\log n$.

 For Part 2, observe that $\sum_{\ell=1}^{L} d_{\ell,i}$ has expected value at most $Ln\rho$. To control the deviation, Bernstein's inequality implies that
 $$
 \mathbb P\left[\sum_{\ell=1}^{L} (d_{\ell,i}-\mathbb E d_{\ell,i})\ge t\right]\le
 \exp\left(-\frac{t^2/2}{4\rho nL + 2t}\right)
 $$
  with probability at least $1-O((L+n)^{-1})$
 \begin{align*}
 \max_i \sum_{\ell=1}^{L} (d_{\ell,i}-\mathbb E d_{\ell,i}) \le &
C\left[ \rho^{1/2}n^{1/2}L^{1/2}\log^{1/2}(L+n)+\log (L+n)\right]\\
\le & C\rho^{1/2}n^{1/2}L^{1/2}\log^{1/2}(L+n)\,.
 \end{align*}

For Part 3, first we have $\mathbb E\sum_{\ell,i}d_{\ell,i}\le Ln^2\rho$, and
the deviation $X_{\ell,ij} = A_{\ell,ij} - P_{\ell,ij}$ satisfies
$$
\mathbb P\left[\sum_{\ell=1}^{L}\sum_{ij}X_{\ell,ij}\ge t\right]\le \exp\left(-\frac{t^2/2}{16\rho n^2 L + 4t}\right)\,.
$$
The claim follows from the assumption $\rho n^2 L \gtrsim \rho^{1/2}n L^{1/2}\sqrt{\log (L+n)}+\log(L+n)$.
  
  For Part 4, we first decompose 
  $$
  \sum_{\ell=1}^{L} A_\ell^2 = S_{1,A}+S_{2,A}.
  $$
  where $S_{2,A}$ is the diagonal part of $\sum_\ell A_\ell^2$, with
  $$
  (S_{2,A})_{ii}=\sum_{\ell=1}^{L} d_{\ell,i}\,.
  $$
  Using Part 2, we have with high probability
  \begin{equation}\label{eq:sum_A_square_diag}
    \|S_{2,A}\|=\max_{i}(S_{2,A})_{ii}\le CLn\rho\,.
  \end{equation}
For the off-diagonal part $S_{1,A}=\sum_\ell A_\ell^2-S_{2,A}$, we can obtain a high probability bound using decoupling. Let  $\tilde S_{1,A}$ be the corresponding version of $S_{1,A}$ for $\sum_\ell A_\ell \tilde A_\ell$.
For a matrix $M$, let $\|M\|_{1,\infty}=\max_i \sum_j|M_{ij}|$ be the maximum row-wise $\ell_1$ norm. Using symmetric dilation, Perron-Frobenius theorem and non-negativity of $A_\ell$, $\tilde A_\ell$ we have
$$
\|\tilde S_{1,A}\|\le \max\left\{\|\tilde S_{1,A}\|_{1,\infty},\|\tilde S_{1,A}^T\|_{1,\infty}\right\} \le\max\left\{ \left\|\sum_{\ell=1}^{L} A_\ell \tilde A_\ell\right\|_{1,\infty},\left\|\sum_{\ell=1}^{L} \tilde A_\ell  A_\ell\right\|_{1,\infty}\right\}\,.
$$
  % For the off-diagonal term, again using decoupling lemma,
  % \begin{align*}
  %  & \mathbb P\left[\left\|\sum_m A_m^2 - {\rm diag}\left(\sum_{m}A_m^2\right)\right\|\ge t\right]=\mathbb P\left[\left\|\sum_{m}\sum_{(i,j)\neq{i',j'}}a_{m,ij}a_{m,i'j'}B_{ij}B_{i'j'}\right\|\ge t \right]\\
  %  \le & c\mathbb P\left[\left\|\sum_{m}\sum_{(i,j)\neq{i',j'}}a_{m,ij}\tilde a_{m,i'j'}B_{ij}B_{i'j'}\right\|\ge t/2 \right]\\
  %  =& c\mathbb P\left[\left\|\sum_{m} A_m \tilde A_m-{\rm diag}(A_m\tilde A_m)\right\|\ge t/2\right]\,.
  % \end{align*}
  % By Perron-Frobenious Theorem, the operator norm of $\sum_{m} A_m \tilde A_m-{\rm diag}(A_m\tilde A_m)$ is upper bounded by the maximum row and column sum, which is further upper bounded by the maximum row and column sum of $\sum_m A_m \tilde A_m$.
By symmetry, it suffices to upper bound the maximum row sum of $\sum_\ell A_\ell \tilde A_\ell$. 
The sum of the $i$th row is
$$
\sum_{\ell=1}^{L} \sum_{j,m}  A_{\ell,im}\tilde A_{\ell,jm}=\sum_{\ell=1}^{L} \sum_{m=1}^{n}A_{\ell,im} \tilde d_{\ell,m}
$$
whose expected value is upper bounded by $\rho^2 n^2 L$.

Intersecting on the event that $\max_{\ell,m}\tilde d_{\ell,m}\le C\log(L+n)$ and $\max_i\sum_\ell \tilde d_{\ell,i}\le CLn\rho$, the mean deviation
$$
\sum_{\ell=1}^{L} \sum_{m=1}^{n} X_{\ell,im}\tilde d_{\ell,m}
$$
can be bounded Bernstein's inequality
\begin{align*}
&\mathbb P\left[\sum_{\ell=1}^{L} \sum_{m=1}^{n}  X_{\ell,im} \tilde d_{\ell,m}\ge t\bigg| \tilde A_\ell\right]
\\\le &
\exp\left(-\frac{t^2/2}{4\rho\sum_{\ell,m}\tilde  d_{\ell,m}^2+2t\max_{\ell,m}\tilde d_{\ell,m}}\right)\\
\le & \exp\left(-\frac{t^2/2}{4C\rho\log(L+n)\sum_{\ell,m}\tilde  d_{\ell,m}+2Ct\log(L+n)}\right)\\
\le & \exp\left(-\frac{t^2/2}{4C\log(L+n)\rho^2 n^2 L +2Ct\log(L+n)}\right)\,. % \\
 % \le & \exp\left(-c''\frac{t^2}{\rho\sum_{m,l}\tilde  d_{m,l}+t}\right)\\
 % \le &\exp\left(-c''\frac{t^2}{M n^2 \rho^2+t}\right)\,.
\end{align*}
Using a union bound over $i\in\{1,\ldots,n\}$ we conclude that with probability at least $1-O((L+n)^{-1})$
\begin{align*}
 \max_i \sum_{\ell=1}^{L}\sum_{m=1}^{n} X_{\ell,im}\tilde d_{\ell,m} \le C\rho nL^{1/2}\log(L+n)\le C \rho^2 n^2 L\,.
\end{align*}
Therefore we proved that with high probability $\|\tilde S_{1,A}\|\le C\rho^2 n^2 L$. Combining this with \eqref{eq:sum_A_square_diag} we have with high probability
$$
\left\|\sum_{\ell=1}^{L}A_\ell^2\right\|\le \|S_{1,A}\|+\|S_{2,A}\| \le C \rho n L \,. \qedhere
$$
\end{proof}

\begin{proof}[Proof of \Cref{thm:s1_sparse_bernoulli}]
By the sparse Bernoulli assumption, $X_\ell$ satisfy \Cref{ass:bern1} with $(v_1,R_1)=(2\rho,1)$ and \Cref{ass:bern2p} with $(v_2',R_2')=(2\rho^2,1)$.
Using the decoupling argument, we reduce the problem to controlling
$\tilde S$ and $\tilde S_2$ respectively.

First, for $\tilde S_2$, it is easy to verify that $\tilde S_2$ is a diagonal matrix with
$$
(\tilde S_2)_{ii}=\sum_{\ell=1}^{L}\sum_{j=1}^{n} X_{\ell,ij}\tilde X_{\ell,ij}\,,
$$
which is a sum of $nL$ independent zero-mean, $(2\rho^2,1)$-Bernstein random variables.
Using Bernstein's inequality and union bound over $i$, we have with probability at least $1-(L+n)^{-1}$
\begin{align}
  \|\tilde S_2\|=\max_i \left|(\tilde S_2)_{ii}\right|\le C\rho n^{1/2}L^{1/2}\sqrt{\log (L+n)}\,.\label{eq:tilde_s2_sb}
\end{align}

Second, we turn to $\tilde S$.  Recall that $X_\ell=A_\ell-P_\ell$, where $A_\ell$ consists of the uncentered versions of the corresponding entries of $X_\ell$, and $P_\ell=\mathbb E A_\ell$.
\begin{align*}
\tilde S=  \sum_{\ell=1}^{L} X_\ell \tilde X_\ell = \Big[\sum_{\ell=1}^{L} X_\ell \tilde A_\ell\Big] - \Big[\sum_{\ell=1}^{L} X_\ell P_\ell\Big].
\end{align*}

Using \Cref{lem:mat_prod} and the fact that $\|P_\ell\|\le n\rho$ and $\|P_\ell\|_{2,\infty}\le \rho n^{1/2}$, we have with probability at least $1-((L+n)^{-1})$ and universal constant $C$
\begin{align}
  \left\|\sum_{\ell=1}^{L} X_\ell P_\ell\right\| \le & C\left[\rho^{3/2}n^{3/2}L^{1/2}\sqrt{\log(L+n)}+\rho n^{1/2}\log (L+n)\right]\nonumber\\
  \le & C\rho^{3/2}n^{3/2}L^{1/2}\sqrt{\log(L+n)}\,.\label{eq:lin_part_tilde_s_sb}
\end{align}

Now we focus on $\sum_\ell X_\ell \tilde A_\ell$ by conditioning on $\tilde A_\ell$.
By \Cref{lem:counting}, with high probability $\|\sum_\ell \tilde A_\ell^2\|\le C \rho n L$ and $\max_\ell \|\tilde A_\ell\|_{2,\infty}=\max_{\ell,i} \tilde d_{\ell,i}^{1/2}\le C\log^{1/2}(L+n)$. Applying \Cref{lem:mat_prod}, intersecting on this event, we have with high probability
\begin{align}
 \left\|\sum_{\ell=1}^{L} X_\ell \tilde A_\ell\right\|\le & C\left[\rho^{1/2}n^{1/2}(\rho n L\log(L+n))^{1/2}+\log^{3/2}(L+n)\right]\nonumber\\
 \le & C \rho n L^{1/2}\log^{1/2}(L+n)\,.\label{eq:sum_X_A_sb}
  \end{align}
Combining \eqref{eq:lin_part_tilde_s_sb} and \eqref{eq:sum_X_A_sb} we obtain with high probability
\begin{equation}\label{eq:tilde_s_sb}
\|\tilde S\|\le C \rho n L^{1/2}\log^{1/2}(L+n)\,.
\end{equation}
The claimed bound holds for $\tilde S_1$ by combining \eqref{eq:tilde_s2_sb} and \eqref{eq:tilde_s_sb}, and hence holds for $S_1$ by decoupling.
\end{proof}

\section{Proof of consistency of bias-adjusted sum-of-squared spectral clustering}\label{app:consistency}
The plan is to decompose the matrix $S_0$ into the sum of a signal term and a noise term, where the signal term has a leading principal subspace with perfect clustering, and then apply matrix perturbation results (the Davis-Kahan $\sin\Theta$ theorem) combined with a standard error analysis of the K-means algorithm. We first introduce some notation a preliminary result for the K-means problem.

Given an $n\times d$ matrix $\hat U$, the K-means problem is an optimization problem
$$
\min_{\Theta,X} \|\hat U - \Theta X\|_F^2
$$
where the minimization is over all $\Theta \in\{0,1\}^{n\times K}$ with each row has exactly one ``$1$'', and all $X\in\mathbb R^{K\times d}$.
We say a pair $(\hat\Theta,\hat X)$ is an $(1+\epsilon)$-approximate solution if its objective function value is no larger than $(1+\epsilon)$ times the optimal value.
\begin{lemma}[Simplified from Lemma 5.3 of \cite{LeiR14}]\label{lem:k-means}
  Let $U$ be an $n\times d$ matrix with $K$ distinct rows with minimum pairwise Euclidean norm separation $\gamma$. Let $\hat U$ be another $n\times d$ matrix and $(\hat\Theta,\hat X)$ be an $(1+\epsilon)$-approximate solution to K-means problem with input $\hat U$, then the number of errors in $\hat\Theta$ as an estimate of the row clusters of $U$ is no larger than
  $$
  C_\epsilon\|\hat U-U\|_F^2\gamma^{-2}
  $$
  for some constant $C_\epsilon$ depending only on $\epsilon$.
\end{lemma}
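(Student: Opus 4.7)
The plan is to exploit the fact that $U$ itself has exactly $K$ distinct rows, so the oracle clustering of $U$ yields a zero-cost $K$-means solution on $U$. Write $U=\Theta^\star X^\star$, where $\Theta^\star$ encodes which distinct row each $U_i$ equals and $X^\star$ collects the $K$ distinct rows of $U$. Then $(\Theta^\star,X^\star)$ is a feasible solution to the $K$-means problem on $\hat U$ with objective value $\|\hat U-U\|_F^2$. By the $(1+\epsilon)$-approximation guarantee, $\|\hat U-\hat\Theta\hat X\|_F^2\le (1+\epsilon)\|\hat U-U\|_F^2$, and a Frobenius-norm triangle inequality then yields $\|\hat\Theta\hat X-U\|_F^2\le \bigl(1+\sqrt{1+\epsilon}\bigr)^2\|\hat U-U\|_F^2=:C_\epsilon'\|\hat U-U\|_F^2$. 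This reduces the problem to a deterministic one: if $\hat\Theta\hat X$ is Frobenius-close to $U$, then $\hat\Theta$ must cluster most rows correctly.

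Next, I would pass from this global Frobenius-norm bound to a node-wise error count via a Markov-style argument. Define the set of ``bad'' rows $T=\{i: \|(\hat\Theta\hat X)_i - U_i\|\ge \gamma/2\}$. Since each $i\in T$ contributes at least $\gamma^2/4$ to $\|\hat\Theta\hat X-U\|_F^2$, one obtains $|T|\le 4 C_\epsilon'\|\hat U-U\|_F^2\gamma^{-2}$. On $T^c$, every row of $\hat\Theta\hat X$ is within $\gamma/2$ of the corresponding row of $U$; combined with the $\gamma$-separation of the $K$ distinct rows of $U$, this forces any two $T^c$ nodes sharing a $\hat\Theta$-cluster to have the same true cluster label.

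The last step is to construct a label permutation $\pi$ realizing the Hamming-distance bound. For each $\hat\Theta$-cluster $\hat k$ containing at least one node in $T^c$, I would define $\pi(\hat k)$ to be the unique true cluster $k$ such that the $\hat k$-th row of $\hat X$ lies within $\gamma/2$ of the $k$-th distinct row of $U$; uniqueness follows from the $\gamma$-separation, and injectivity of $\pi$ on these clusters follows by the same triangle-inequality reasoning (two $\hat\Theta$-centroids within $\gamma/2$ of the same true centroid would both have to correspond to it, contradicting that they come from different $\hat\Theta$-clusters with $T^c$ representatives of distinct true labels). Any remaining $\hat\Theta$-clusters (those whose nodes all lie in $T$, or which are empty) are matched arbitrarily to the still-unused true labels, yielding a full bijection. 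Under this $\pi$, every $T^c$ node is correctly labelled, so the Hamming error is at most $|T|\le 4 C_\epsilon'\|\hat U-U\|_F^2\gamma^{-2}$, giving the claim with $C_\epsilon=4C_\epsilon'$.

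The main obstacle is the book-keeping in the construction of $\pi$: one must verify carefully that $\pi$ is well-defined and injective on the subset of $\hat\Theta$-clusters meeting $T^c$, and that extending $\pi$ arbitrarily on the remaining clusters does not inflate the mismatch count, because those clusters contribute only nodes already counted in $|T|$. This is precisely where the $\gamma$-separation assumption is used essentially, and it distinguishes the argument from a purely norm-based bound on $\|\hat\Theta\hat X-U\|_F$.
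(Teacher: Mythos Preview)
The paper does not supply its own proof of this lemma; it simply cites Lemma~5.3 of \cite{LeiR14}. Your approach is exactly the standard argument behind that lemma: use $(\Theta^\star,X^\star)$ as a competitor to bound $\|\hat\Theta\hat X-U\|_F$, then pass to a node count via Markov on the rows. So in spirit you are reproducing the cited proof.

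There is, however, a genuine gap in your injectivity step. You assert that if two estimated centroids $\hat X_{\hat k_1},\hat X_{\hat k_2}$ both lie within $\gamma/2$ of the same true centroid, this ``contradict[s] that they come from different $\hat\Theta$-clusters with $T^c$ representatives of distinct true labels.'' But nothing in your argument so far forces $T^c$ representatives of \emph{different} estimated clusters to have \emph{distinct} true labels: a single true cluster can be split across two estimated clusters with both pieces landing in $T^c$. In that situation $\phi(\hat k_1)=\phi(\hat k_2)$ and your $\pi$ is not injective, yet no contradiction has been reached.

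The correct route to injectivity is through surjectivity plus pigeonhole: if every true cluster $k$ contains at least one $T^c$ node, then that node's estimated cluster maps to $k$ under $\phi$, so $\phi$ is onto $\{1,\dots,K\}$; since $|J|\le K$, $\phi$ is then a bijection. The hypothesis ``every true cluster meets $T^c$'' holds precisely when $|T|$ is smaller than the minimum true cluster size, i.e.\ when $4C_\epsilon'\|\hat U-U\|_F^2\gamma^{-2}<\min_k n_k$. This side condition is present in the original Lemma~5.3 of \cite{LeiR14} and is what the word ``Simplified'' in the statement is glossing over; in the paper's application (proof of \Cref{thm:consistency}) it is satisfied because the error bound is $o(n)$ while the clusters have size $\asymp n$. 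You should make this explicit rather than rely on a triangle-inequality argument that does not actually close the loop.
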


\begin{proof}[Proof of \Cref{thm:consistency}]
Let $Q_\ell = \rho \Theta B_{\ell,0} \Theta^T$ then $P_\ell= Q_\ell-{\rm diag}(Q_\ell)$ and
\begin{align*}
\sum_{\ell=1}^{L} P_\ell^2=&\sum_{\ell=1}^{L} Q_\ell^2+[{\rm diag}(Q_\ell)]^2-Q_\ell{\rm diag}(Q_\ell)-{\rm diag}(Q_\ell) Q_\ell\\
=& \sum_{\ell=1}^{L} Q_\ell^2+E_1,
\end{align*}
where $E_1=\sum_\ell [{\rm diag}(Q_\ell)]^2-Q_\ell{\rm diag}(Q_\ell)-{\rm diag}(Q_\ell) Q_\ell$.
Furthermore, define the following additional error terms, 
\begin{align*}
  E_2=&\sum_{\ell=1}^{L} X_\ell P_\ell+P_\ell X_\ell\,,\\
  E_3=&S_2-\sum_{\ell=1}^{L} D_\ell\,,\\
  E_4=&S_1\,,
\end{align*}
where $S_1$, $S_2$ are defined as in \eqref{eq:S2} and \eqref{eq:S1} with $G_\ell=I_n$.
By the definition of $S_0$ in \eqref{eq:S0} and the decomposition \eqref{eq:decomp}, we have
$$
S_0=\sum_{\ell=1}^{L} Q_\ell^2 + E_1 + E_2 + E_3 + E_4\,.
$$

Let $\Theta=\tilde \Theta \Lambda$ where $\Lambda$ is a $K\times K$ diagonal matrix with $k$th diagonal entry being the $\ell_2$ norm of the $k$th column of $\Theta$. Then $\tilde \Theta$ is orthonormal. By the balanced community size assumption and $K$ assumed being consistent, the minimum eigenvalue of $\Lambda$ is lower bounded by $cn^{1/2}$ for some constant $c$.
We first lower-bound the signal term to be,
\begin{align}
\sum_{\ell=1}^{L} Q_\ell^2=&\rho^2 \tilde \Theta \Lambda \left(\sum_{\ell=1}^{L} B_{\ell,0}\Lambda^2 B_{\ell,0}\right)\Lambda\tilde \Theta^T\label{eq:signal_exp_1}\\
\succeq & cn\rho^2 \tilde \Theta\Lambda\left(\sum_{\ell=1}^{L} B_{\ell,0} B_{\ell,0}\right) \Lambda\tilde \Theta^T\nonumber\\
\succeq & cLn\rho^2 \tilde \Theta\Lambda^2\tilde \Theta^T\nonumber\\
\succeq & cLn^2\rho^2 \tilde \Theta\tilde \Theta^T\,,\label{eq:signal_exp_2}
\end{align}
where we used $\Lambda^2\succeq cn I_K$ and \Cref{ass:signal}.  Note that \eqref{eq:signal_exp_1} implies that the matrix $\sum_\ell Q_\ell^2$ is rank $K$ and the leading eigen-space is spanned by the columns of $\tilde \Theta$, and\eqref{eq:signal_exp_2} implies the smallest non-zero eigenvalue of $\sum_\ell Q_\ell^2$ is lower bounded by $cLn^2\rho^2$.

We now upper-bound the spectral norm of all the error terms. 
The first bias term $E_1$ is non-random and satisfies $\|E_1\|\le Ln\rho^2$.% and $\|E_1\|_\infty\le L\rho^2$.

For the noise term $E_2$, % the $(i,j)$-entry is
% $
% \sum_{\ell,k}X_{\ell,ik}P_{\ell,jk}\,.
% $
% Use Bernstein's inequality and union bound, 
applying \Cref{lem:mat_prod} with $H_\ell=P_\ell$ and realizing that $\|P_\ell\|\le n\rho$ and $\|P_\ell\|_{2,\infty}\le \sqrt{n}\rho$,
we have with high probability
% \begin{equation}\label{eq:E2_sup}
%   \|E_2\|_\infty\le C L^{1/2} n^{1/2} \rho^{3/2}\log^{1/2}(L+n)\,.
% \end{equation}
% and consequently
\begin{align}
\|E_2\|\le  & CL^{1/2}n^{3/2}\rho^{3/2}\log^{1/2}(L+n)\,.\label{eq:E2_op}
\end{align}

For $E_3=S_2-\sum_\ell D_\ell$, the decomposition \eqref{eq:S2_diag_decomp} implies that
$\|S_2-\sum_\ell D_\ell\|$ can be upper bounded deterministically by $Ln\rho^2$.  

Next we control $E_4=S_1$. 
% by refining the result of \Cref{thm:quad_sum} using properties of Bernoulli random variables. 
Using \Cref{thm:s1_sparse_bernoulli},
we have
\begin{equation}\label{eq:E3_S1_op}
  \|S_1\|\le C L^{1/2}n\rho \log^{1/2}(L+n)
\end{equation}
with high probability.  Thus
\begin{align}
\frac{\|E_1+E_2+E_3+E_4\|}{\lambda_K(\sum_{\ell}Q_\ell^2)}\le & C\cdot \frac{Ln\rho^2+L^{1/2}n\rho\log^{1/2}(L+n)}{L n^2\rho^2}\nonumber\\
\le & \frac{C}{n}+\frac{C\log^{1/2}(L+n)}{L^{1/2}n\rho}\,,\label{eq:spec_err_bound}
\end{align}
where $\lambda_K(\sum_\ell Q_\ell^2)$ is the $K$th (and smallest) non-zero eigenvalue of $\sum_\ell Q_\ell^2$. Let $U $ and $\hat U$ be the $n\times K$ matrices consisting of the leading eigenvectors of $\sum_\ell Q_\ell^2$ and $S_0$, respectively. 
By the Davis-Kahan $\sin\Theta$ theorem, we have
 \[
 \|\hat U - U\|_F\le \sqrt{K}\|\hat U-U\|\le \frac{\sqrt{K}\|E_1+E_2+E_3+E_4\|}{\lambda_{K}(\sum_\ell Q_\ell^2)-\|E_1+E_2+E_3+E_4\|}\lesssim n^{-1}+\log^{1/2}(L+n)/(L^{1/2}n\rho).
 \]
 The rest proof follows from \Cref{lem:k-means} because Part 1 of \Cref{ass:signal} implies that the minimum separation of two distinct rows in $U$ is at least $C/\sqrt{n}$ for some constant $C$. (See Lemma 2.1 of \cite{LeiR14}.)
\end{proof}

\begin{proof}[Proof of \Cref{cor:varying_membership}]
  The proof of \Cref{cor:varying_membership} follows the same strategy as that of \Cref{thm:consistency}. Here we only decribe the differences.  We use the notation $\lesssim$ to denote ``bounded up to a universal constant factor''.

 Define $\tilde P_\ell =\rho\Psi_\ell B_\ell \Psi_\ell^T$, $\tilde P_\ell^*=\rho\Psi B_\ell \Psi^T$, and $P_\ell$, $P_\ell^*$ are the corresponding matrices with diagonal zeroed out.
Then the diagonal-removed squared adjacency matrices have the following decomposition,
\begin{align*}
  A_\ell^2-D_\ell = P_\ell^2 + P_\ell E_\ell+E_\ell P_\ell +(E_\ell^2-D_\ell)\,.
\end{align*}

It is easy to verify that the last three noise terms can be bounded using identical arguments, provided that the maximum $\ell_2$ norm of the rows of $P_\ell$ is uniformly bounded by a constant factor of the corresponding quantity of $P_\ell^*$, which is implied by \Cref{ass:signal}(a).
Therefore the only part that requires treatment is the squared signal term. It suffices to control the difference between the layer-wise squared signal and the common squared signal,
\begin{align*}
  P_\ell^2 - (\tilde P_\ell^*)^2 = & \big(P_\ell^2- \tilde P_\ell^2\big)+ \big(\tilde P_\ell^2-(\tilde P_\ell^*)^2\big)\,.
\end{align*}

The first term is bounded by $O(n\rho^2)$, using the same argument as upper bounding the ``$E_1$'' term in the proof of \Cref{thm:consistency}.

The second term is
\begin{align*}
  (1/2)(\tilde P_\ell-\tilde P_\ell^*)(\tilde P_\ell+\tilde P_\ell^*)+(1/2)(\tilde P_\ell+\tilde P_\ell^*)(\tilde P_\ell-\tilde P_\ell^*)\,.
\end{align*}
 With $\|\tilde P_\ell+\tilde P_\ell^*\|\lesssim n\rho$, we only need to control $\|\tilde P_\ell-\tilde P^*_\ell\|$. Observe,
 \begin{align*}
   \rho^{-1}(\tilde P_\ell-\tilde P^*_\ell) = & \Psi_\ell B_\ell\Psi_\ell^T-\Psi B_\ell \Psi^T\\
=& \Psi_\ell B_\ell (\Psi_\ell-\Psi)^T + (\Psi_\ell-\Psi) B_\ell \Psi
   \end{align*}
By construction, we have $\|\Psi\|\lesssim \sqrt{n}$, so
$$
\|\tilde P_\ell^2-(\tilde P_\ell^*)^2\|\le C (n\rho^2 + \epsilon_\ell n^2\rho^2)\,.
$$
Therefore, the additional spectral perturbation added to the common squared signal due to the varying membership is no larger than
$$
C L n\rho^2 + L \bar\epsilon n^2\rho^2\,.
$$
As a result, the right hand side of \eqref{eq:spec_err_bound} becomes
$$
\frac{C}{n}+C \bar\epsilon +\frac{C\log^{1/2}(L+n)}{L^{1/2}n\rho}\,,
$$
and the rest of the proof follows that of \Cref{thm:consistency}.
\end{proof}

\paragraph{Details about \Cref{rem:n_rho>1}.}  The proof of the claimed error bound in \Cref{rem:n_rho>1} follows the same steps as that of \Cref{thm:consistency}.  In this case \Cref{lem:counting} still holds without modification.  The only change in  the proof of \Cref{thm:consistency} is that \eqref{eq:E2_op} does not further simplify and will become the dominant term in the numerator of \eqref{eq:spec_err_bound}.  So the error bound $\|\hat{U} - U\|_F$ becomes, ignoring the constant factor,
$$
\frac{1}{n}+\frac{L^{1/2}n^{3/2}\rho^{3/2}\log^{1/2}(L+n)}{Ln^2\rho^2} = \frac{1}{n}+\frac{\log^{1/2}(L+n)}{L^{1/2}n^{1/2}\rho^{1/2}}\,.
$$

\section{Additional results for simulation}  \label{app:additional_simu}

In this appendix section, we provide additional results to the simulations
in \Cref{sec:simu}.

\paragraph{Computational time needed.}
When computing the simulation results shown in \Cref{fig:simulation_rho}, 
the ``Sum'',``SoS'', and ``SoS-Debias'' methods  complete in less than 0.2 seconds on average (over all trials and simulation setting). In comparison, the ``Tensor matrization'' and ``LMF'' methods complete in less than 2 seconds on average (i.e., 10 times slower), and the ``Co-reg'' method completes in 4 seconds on average (i.e., 20 times slower). We note that ``Co-reg'' 
(i.e., Co-regularized Spectral Clustering) is slower when compared to ``SoS-Debias'' since 
the former method solves a more nuanced statistical problem, where it estimates both   global embedding as well as a layer-specific embedding for each of the $L$ layers.

\paragraph{Nodes with exceptionally high degree in low-sparsity regimes.}
In \Cref{fig:simulation_degree}, we show that when the overall edge density parameter
$\rho$ is too small, the nodes within each community have a highly variable degree, whereas when $\rho$ is larger, the nodes' degrees are more concentrated. 
This is an alternative way to understand why our bias-adjustment method is
important when $\rho$ is small, as the spectrum of $\sum_{\ell}A^2_{\ell}$ could
dramatically change when the diagonal entries are zero-ed out when the values on the diagonal (i.e., the degree of each node) is highly variable (shown previously in \Cref{fig:simulation_bias}). Relating \Cref{fig:simulation_degree} to
the simulation results in \Cref{fig:simulation_rho}, we see that when $\rho = 0.025$, the Communities 1 and 3 have nodes whose degree deviate far from the empirical mean. This is a phenomenon described in many theoretical analyses of sparse SBM such as \cite{LeLV17}. This means the diagonal entries can heavily distort the spectrum of $\sum_{\ell}A^2_{\ell}$, which explains the sum-of-squared method's poor performance in \Cref{fig:simulation_rho} (orange triangles). However, when $\rho =0.2$,
all three communities have nodes whose degree concentrate tightly around the mean. This means the diagonal entries no longer distort the spectrum of $\sum_{\ell}A^2_{\ell}$, resulting in the sum-of-squared method achieving perfect cluster estimation.

\begin{figure}[H]
  \centering
   \includegraphics[width=200px]{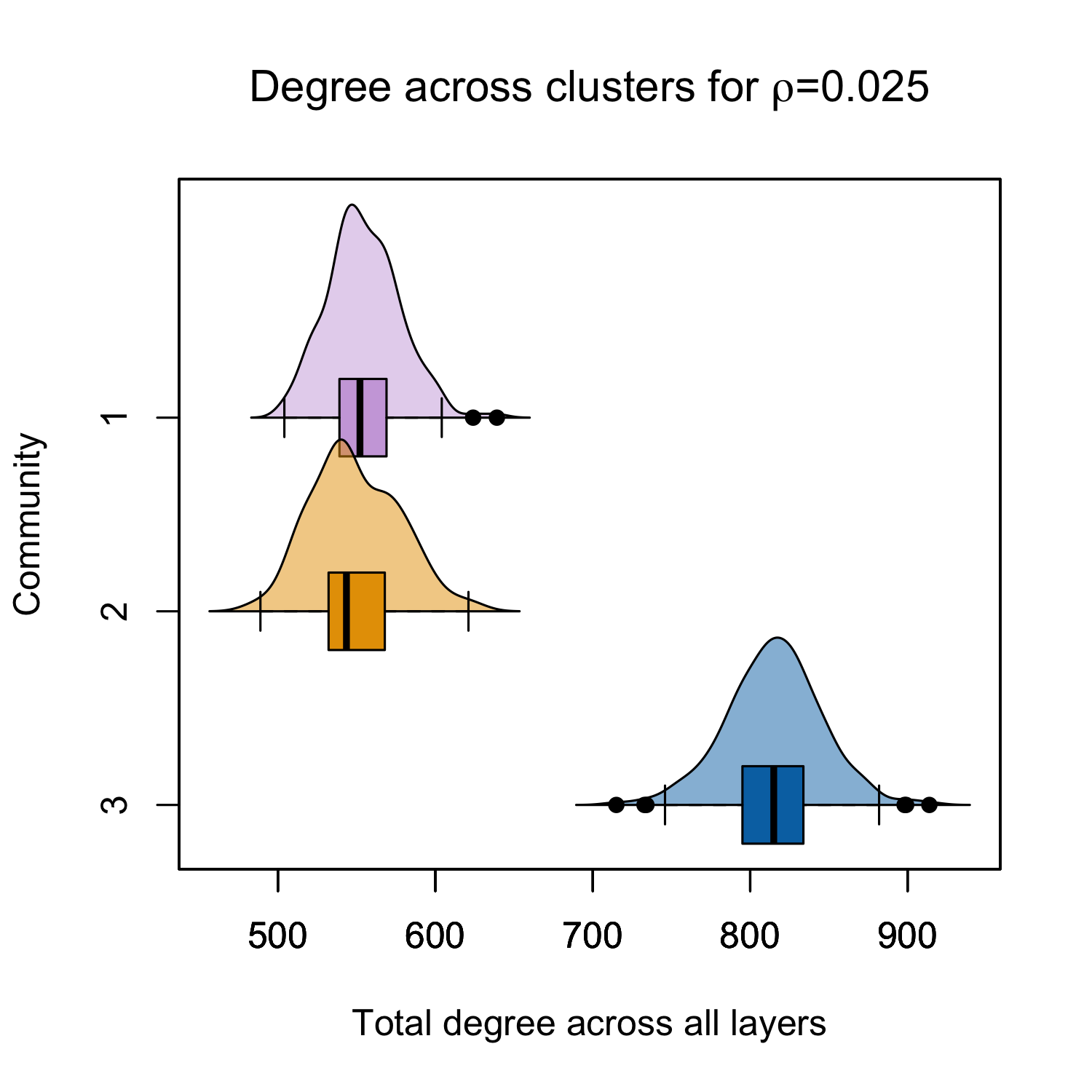}  
    \includegraphics[width=200px]{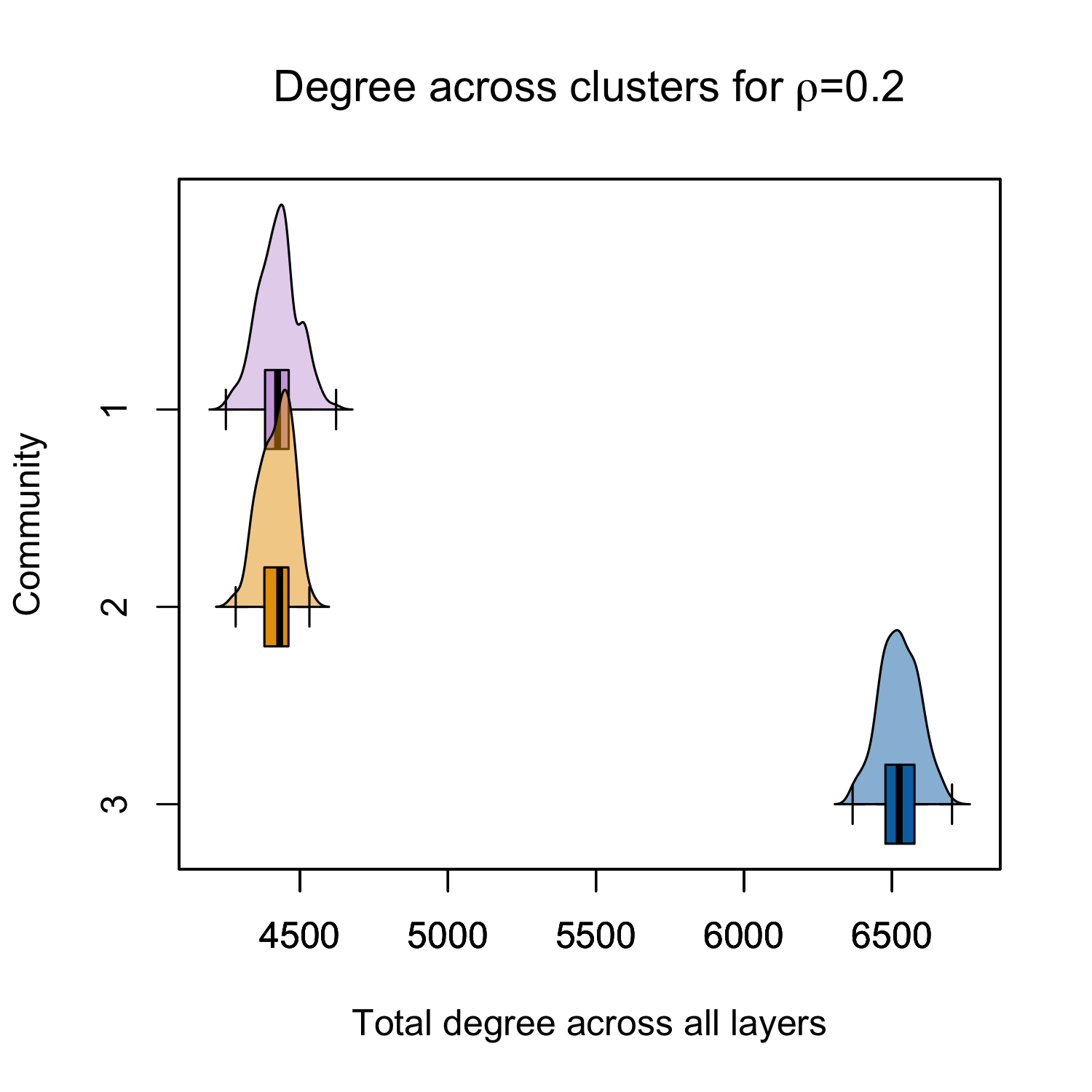}
       \vspace{-1em}  \caption
   { \small The distribution of diagonal entries of $\sum_{\ell}A_{\ell}^2$ (i.e., total degree among all $L$ layers) among nodes in the three communities based on one realization of $A_1,\ldots,A_L$ given the setup described in \Cref{sec:simu}, using $\rho = 0.025$ (left) or $\rho=0.2$ (right). Both the boxplot and estimate kernelized density are shown for each of the three communities, each colored differently. The black points depict nodes whose degree lie outside of the interquartile range of that communities' degrees. }
    \label{fig:simulation_degree}
\end{figure}

\paragraph{Effect of diagonal bias on the spectral embedding.} 
In \Cref{fig:simulation_3d}, we demonstrate how the effect of the diagonal bias
affects the spectral embedding (i.e., the leading $K=3$ eigenvectors of $\sum_{\ell}A_{\ell}^2$) when $\rho=0.15$. This is an important visualization since K-means
is performed on this spectral embedding. Specifically, \Cref{fig:simulation_3d} (top right) shows that when the diagonal entries are not zero-ed out, the nodes in the third community (blue) have an unusually high variance along the third dimension (i.e., corresponding to the third eigenvector). This matches our understanding from \Cref{fig:simulation_bias}, where at $\rho=0.15$, the third eigenvalue is indistinguishable from noise. This unusually-high variance among the third communities negatively impacts the resulting K-means clustering, shown in \Cref{fig:simulation_3d} (top left). However, when we consider the bias-adjusted variant of $\sum_{\ell}A_{\ell}^2$, the spectral embedding yields more-uniformly separated communities, shown in \Cref{fig:simulation_3d} (bottom right), which enables K-means clustering to recover the correct communities (bottom left).

\begin{figure}[H]
  \centering
   \includegraphics[width=350px]{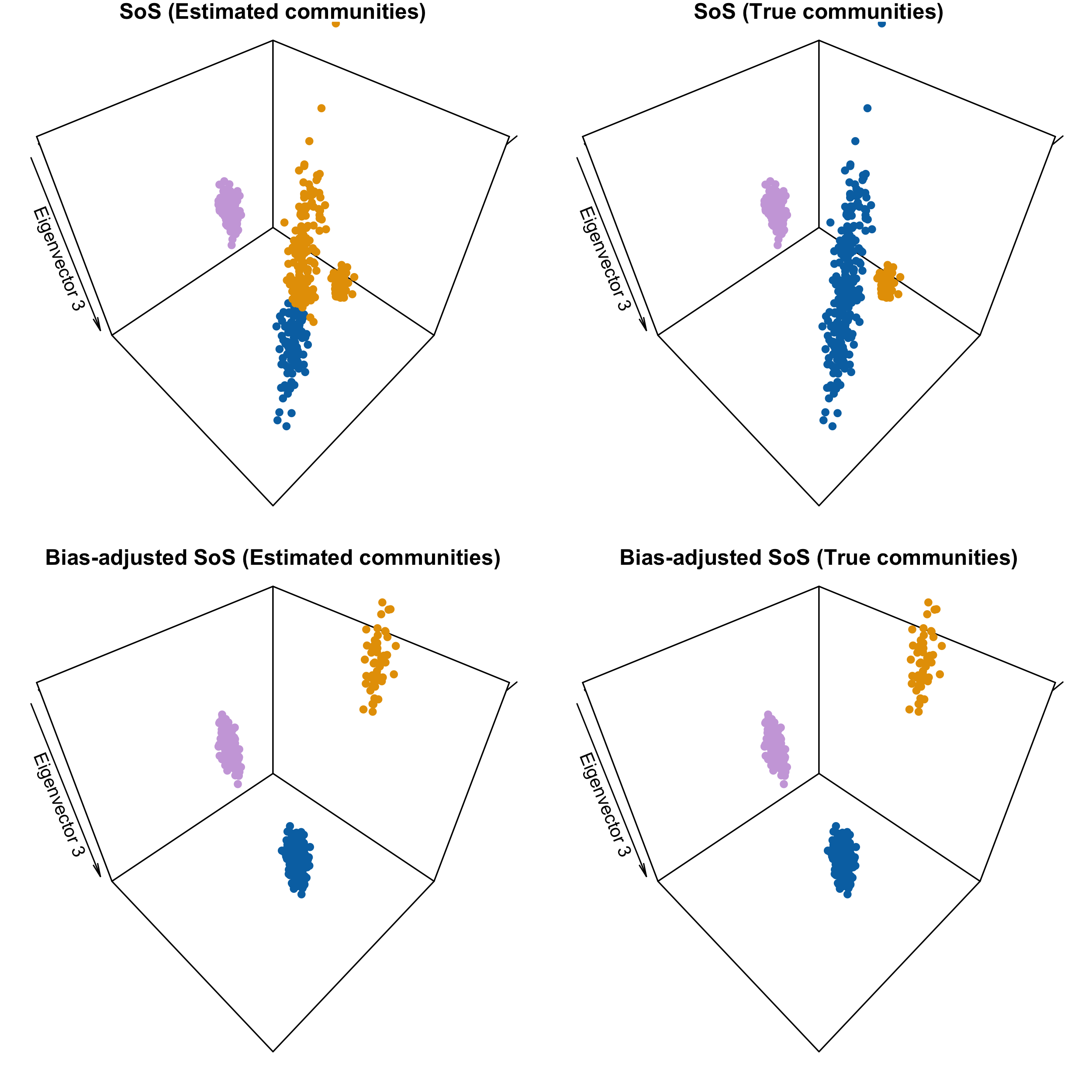}  
   \vspace{-2em}
   \caption
   { \small The visualization of all $n=500$ nodes based on the simulation setting in \Cref{sec:simu} for $\rho=0.15$, where the nodes have coordinates corresponding to the leading $K=3$ eigenvectors of $\sum_{\ell}A_{\ell}^2$ (top row) or the bias-adjusted variant of $\sum_{\ell}A_{\ell}^2$ (bottom row, i.e., zero-ing out the diagonal). The plots in the left column colors the nodes based estimated communities based on K-means of the shown spectral embedding, and the plots in the right column colors the nodes based on the true community.
   The coloring of the true communities correspond to the colors shown in \Cref{fig:simulation_degree}. }
    \label{fig:simulation_3d}
\end{figure}

\paragraph{Qualitative similarity of results under the varying-membership setting.}
As alluded to in \Cref{cor:varying_membership}, we consider a different data-generating process where the membership of each node can vary among the layers. 
Specifically, we call the original community structure among the $n$ nodes as the common community structure.
Then, 
separately for each of the $L$ layers,
among the $n=500$ nodes, we randomly reassign the membership of nodes in the following fashion, where $\delta \in [0,1)$ is a tunable parameter that dictates how often a node switches communities:
\begin{itemize}
\item The $n_1$ nodes in Community 1 stay in Community 1 with probability $1-\delta$
or switches to Community 3 with probability $\delta$.
\item The $n_2$ nodes in Community 2 stay in Community 2 with probability $1-\delta$
or switches to Community 3 with probability $\delta$.
\item The $n_3$ nodes in Community 3 stay in Community 3 with probability $1-\delta$
or switches to Community 1 or 2 with probability $4 \delta/5$ and $\delta/5$ respectively.
\end{itemize}
Note that while each node could belong to a different community among the $L$ layers,
this procedure keeps the relative sizes of the three communities the same.
Indeed, for any $\delta \in [0,1)$,
\[
\begin{bmatrix}
0.4 & 0.1 & 0.5
\end{bmatrix}
\begin{bmatrix}
1-\delta & 0 & \delta\\
0 & 1-\delta & \delta\\
4\delta/5 & \delta/5 & 1-\delta
\end{bmatrix}
=
\begin{bmatrix}
0.4 & 0.1 & 0.5
\end{bmatrix}
\]
Then, we generative the $L$ networks according to each layer's specific node-memberships as described in  \Cref{sec:simu}. Hence, our goal is to recover the common community structure despite observing $L$ networks where the memberships of each node slightly deviates from said community structure. 

In the first simulation for the varying-membership setting, we fix the overall edge density parameter $\rho=0.1$ and vary the membership-switching probability $\delta \in \{0, 0.05, \ldots, 0.5\}$. The results are shown in \Cref{fig:simulation_switching}. 
We see that when $\delta < 0.2$, our bias-adjusted sum-of-squared method is still able to estimate the common community structure better than the competing methods, but when
 $\delta \geq 0.2$, the variability of the node's membership more-or-less overwhelms all six methods. 
\begin{figure}[t]
  \centering
   \includegraphics[width=300px]{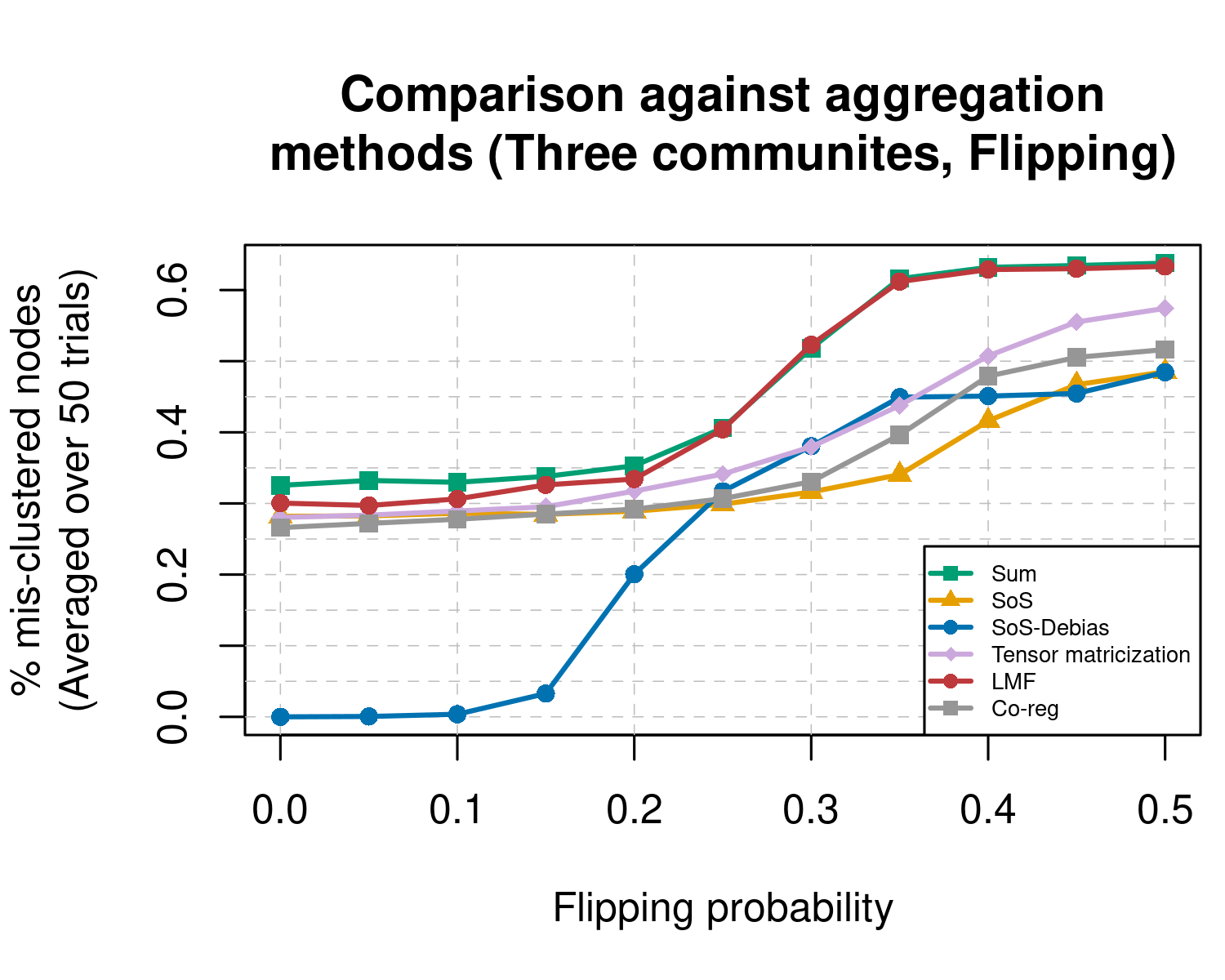} 
   \vspace{-2em}
    \caption{ \small 
 Results in the same layout as in \Cref{fig:simulation_rho}, but using the
 varying-membership data-generation process with $\rho=0.1$. When $\delta=0$, the results correspond to $\rho=0.1$ in \Cref{fig:simulation_rho}.
   }
    \label{fig:simulation_switching}
\end{figure}

In the second simulation for the varying-membership setting, we fix the overall edge density parameter $\rho=0.2$ and vary the membership-switching probability $\delta \in \{0, 0.05, \ldots, 0.5\}$. The results are shown in \Cref{fig:simulation_switching2}.
Interestingly, we see that when $\delta < 0.3$, our bias-adjusted sum-of-squared method is still able to estimate the common community structure better than the competing methods
despite many methods being able to estimate the communities reliably when $\delta=0$. 
We hope to investigate this phenomenon via statistical theory in future work.
\begin{figure}[t]
  \centering
   \includegraphics[width=300px]{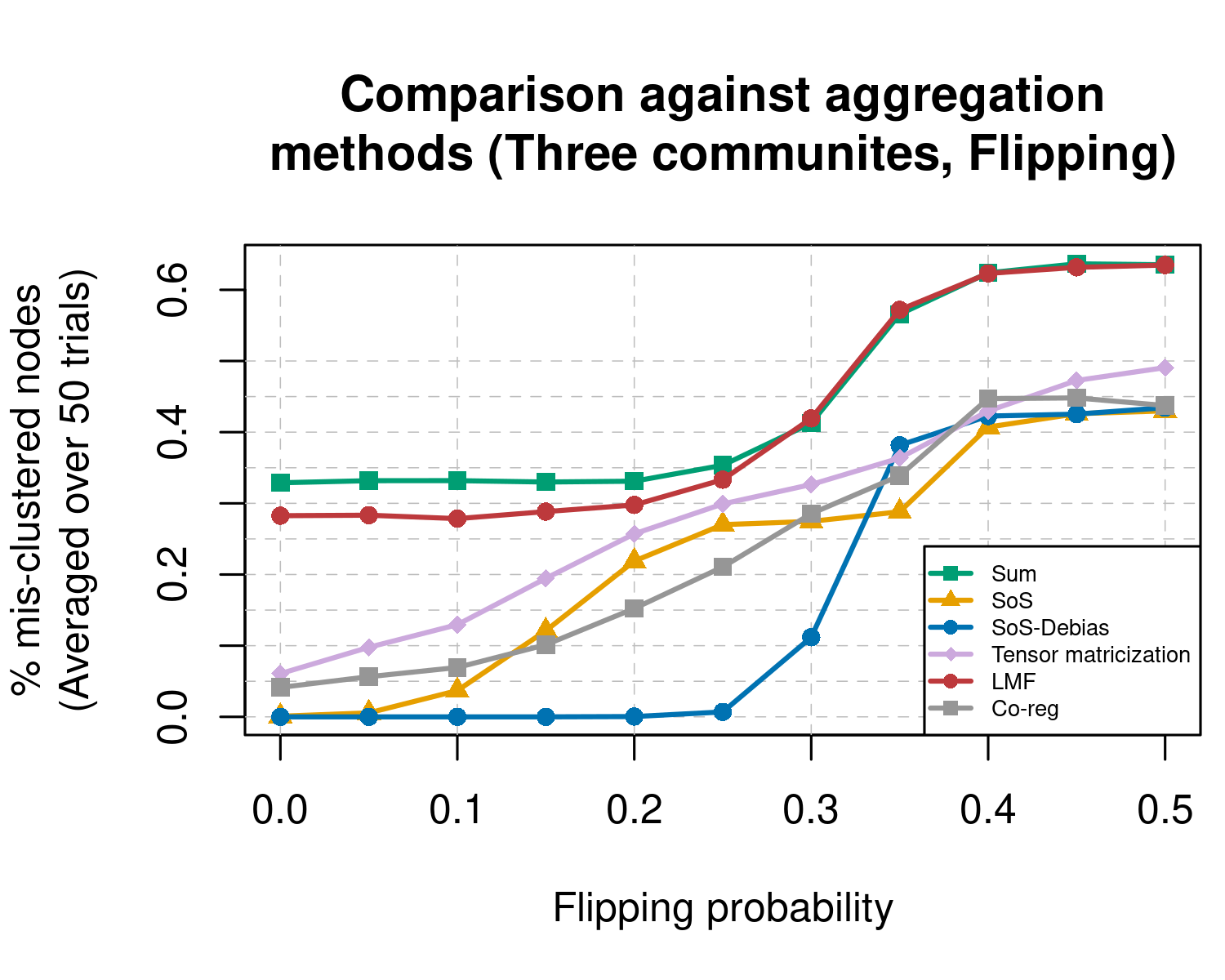} 
   \vspace{-2em}
    \caption{ \small 
Results in the same layout as in \Cref{fig:simulation_rho}, but using the
 varying-membership data-generation process with $\rho=0.2$. Observe that when $\delta=0$, the results correspond to $\rho=0.2$ in \Cref{fig:simulation_rho}.
   }
    \label{fig:simulation_switching2}
\end{figure}

\paragraph{Qualitative similarity of results when weighted eigenvectors are considered.}
In addition to the four methods considered in \Cref{sec:simu}, we can also consider another popular variant of spectral clustering where the eigenvectors are weighted by the eigenvalues, as mentioned in work like \cite{loffler2019optimality}.
Specifically, after we compute $M$ using one of the four aforementioned methods described in \Cref{sec:simu}, consider the SVD $M = UDV^\top$. Then, we perform K-means on $UD$.  The analogous results to \Cref{fig:simulation_rho} using this weighted spectral-clustering variant for the data-generation process described in \Cref{sec:simu} is shown in \Cref{fig:simulation_weighted}.

\begin{figure}[t]
  \centering
   \includegraphics[width=300px]{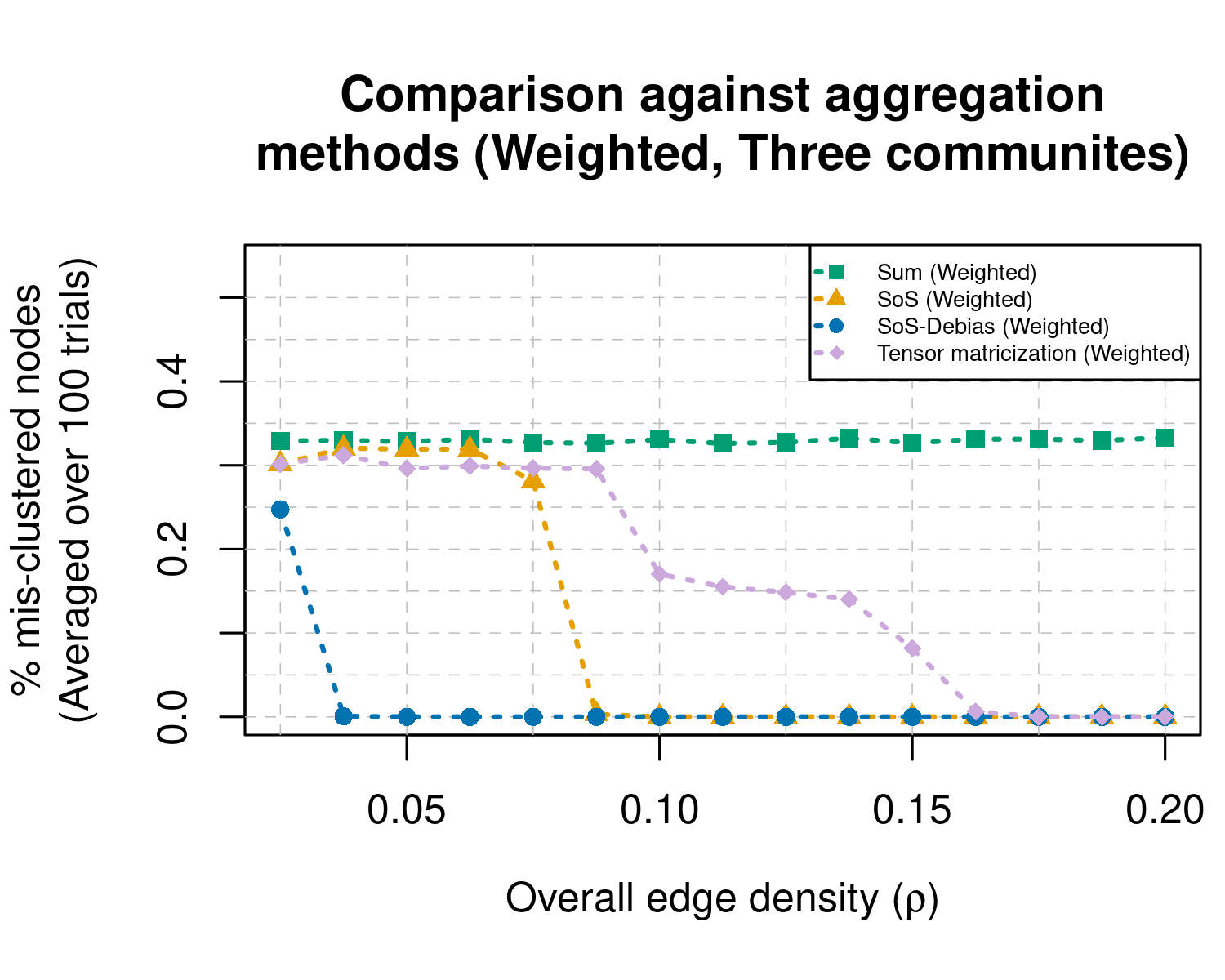} 
   \vspace{-2em}
    \caption{ \small 
 Results using the same exact setup as in \Cref{fig:simulation_rho}, but using the
 weighted spectral-clustering variant. 
   }
    \label{fig:simulation_weighted}
\end{figure}

Observe that reweighting the eigenvectors by the eigenvalues uniformly improves all procedures (i.e., comparing the solid line to its dotted line counterpart with the same color in \Cref{fig:simulation_rho}). This is analogous to clustering results shown in \cite{loffler2019optimality}. Lastly, comparing \Cref{fig:simulation_weighted} to \Cref{fig:simulation_rho}, we observe that using the weighted spectral-clustering on the bias-adjusted version of $\sum_{\ell}A_{\ell}^2$ (shown in blue circles) still enables a wide range of $\rho$ that yields perfect community estimation when compared to
those on $\sum_{\ell}A_{\ell}^2$ (shown in orange triangles). While we do not  prove results corresponding to this weighted spectral-clustering variant, we suspect the theoretical arguments in \cite{loffler2019optimality} would be applicable.

\section{Additional results for data analysis}  \label{app:additional_data}

In this appendix section, we provide additional results to the
data analysis in \Cref{sec:data}.

\paragraph{Additional summary statistics.}
We report the following summary statistics of all ten networks:
\begin{itemize}
\item \textbf{Degree}: The minimum degree across all networks is 0, the median degree ranged from 6 to 34 across all networks (i.e., $6/7836 \approx 0.07\%, 34/7836 \approx 0.4\%$), and the maximum degree
ranged from 406 to 1349 across all networks (i.e., $406/7836 \approx 5\%, 1349/7836 \approx 17\%$).

\item \textbf{Number of connected components}: It is well-documented that is impossible to non-trivially cluster between disconnected
components of a network. Hence, we check how many disconnected components reside within the ten networks.
The median number of connected components across all ten networks is 1605, where on average, 21\% among all 7836 nodes belong
to connected components with five or less nodes. In contrast, the number of connected components corresponding to $\sum_{\ell}A_{\ell}$ (where all non-zero entries are treated as 1) is one. This means one must consider some aggregated variant among $A_1,\ldots,A_{L}$ to meaningful perform clustering.

\item \textbf{Scale-free network}: Most networks observed in practice are scale-free, meaning the nodes' degrees follow the power-law distribution.
This means the empirical frequency of observing nodes with degree $k$ is inversely proportional to $k$ to a power (see \cite{ravasz2003hierarchical} and related work). Specifically,
if we let denote the empirical  frequency of observing nodes with degree $k$ as $p(k)$, then
\[
p(k) \propto k^{-\gamma},
\]
for some $\gamma$. Based on this model, we can measure how well a given network follows this power-law by measuring
\[
-\text{cor}\big(\log(p(k)), \log(k)\big).
\]
When we measure the above metric on all ten networks separately, the lowest squared correlation is 0.82, 
the median is 0.90, and the highest is 0.96, which means all ten networks have degree sequences that reasonably follow the power-law distribution.
\end{itemize}

\paragraph{Visualizing all adjacency matrices.}
Below, in \Cref{fig:app:adj_1to3} through \Cref{fig:app:adj_10}, we plot all ten adjacency matrices.

\begin{figure}[H]
  \centering
   \includegraphics[width=0.32\textwidth]{fig/networksosd/pnas_adj1}  
      \includegraphics[width=0.32\textwidth]{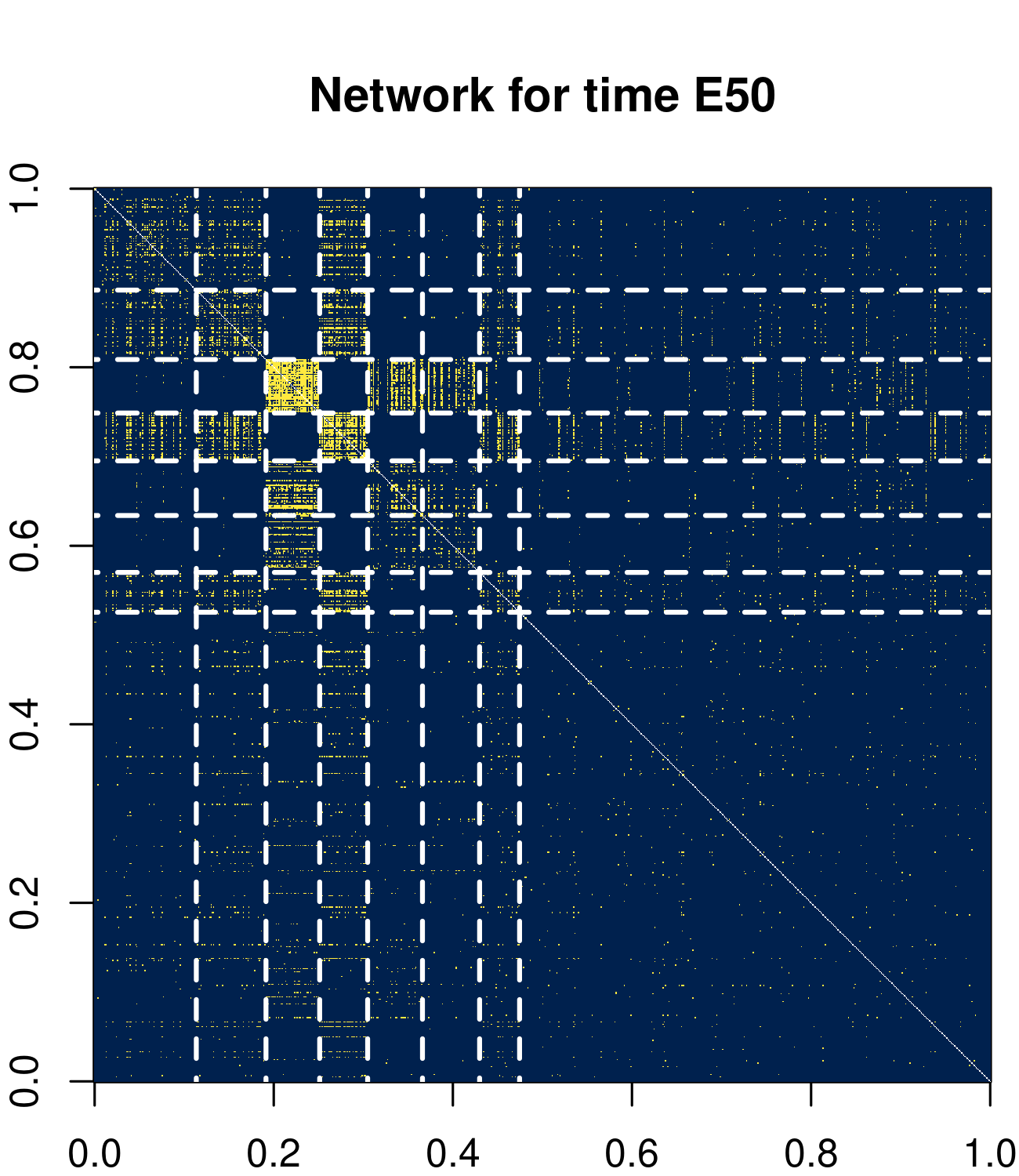}
         \includegraphics[width=0.32\textwidth]{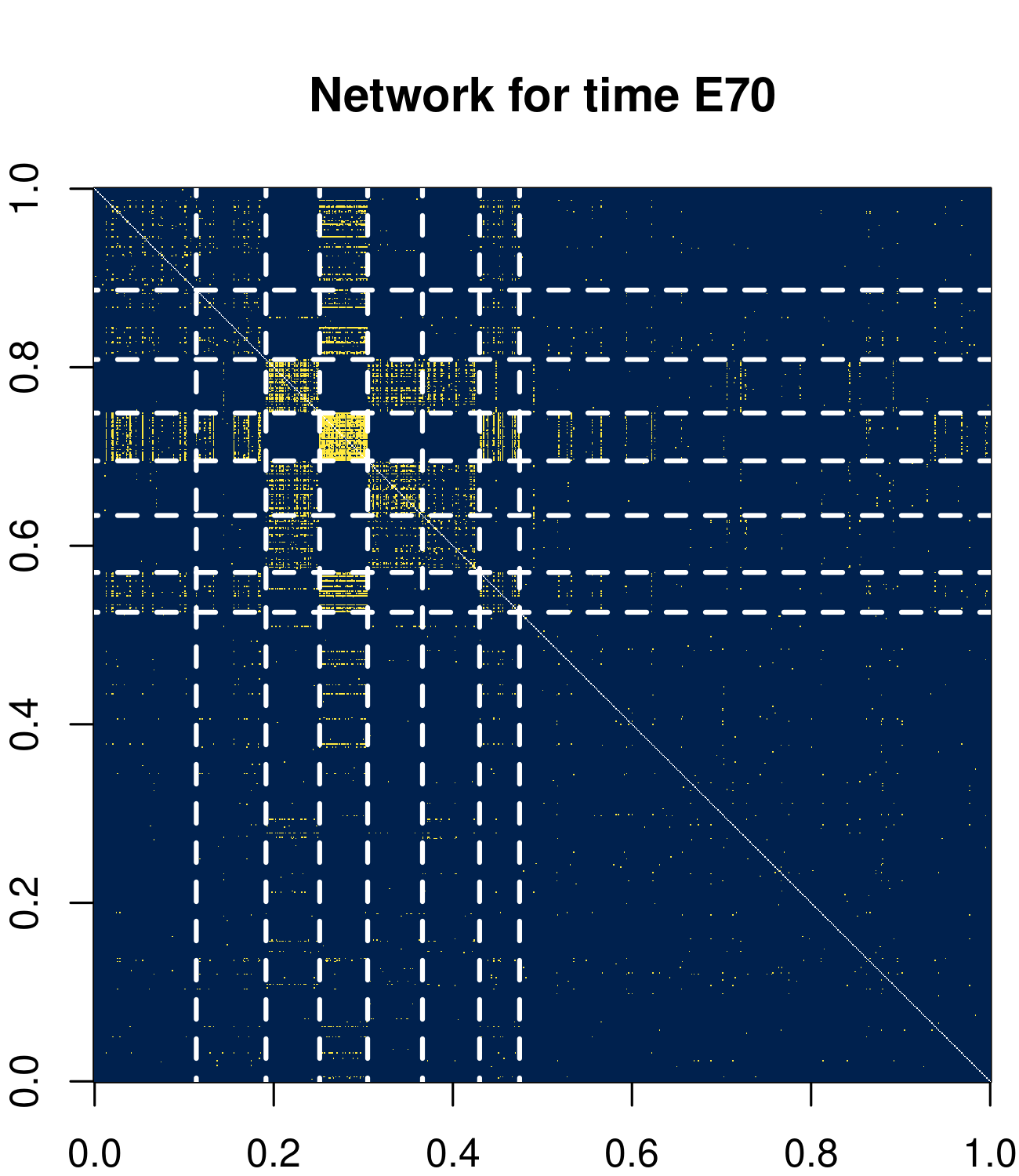}   \caption
   { \small Visualization similar to those in \Cref{fig:pnas_adj}, but for the adjacency matrices corresponding to the developmental times for 40, 50 and 70 days in the embryo (from left to right respectively).  }
    \label{fig:app:adj_1to3}
\end{figure}

\begin{figure}[H]
  \centering
   \includegraphics[width=0.32\textwidth]{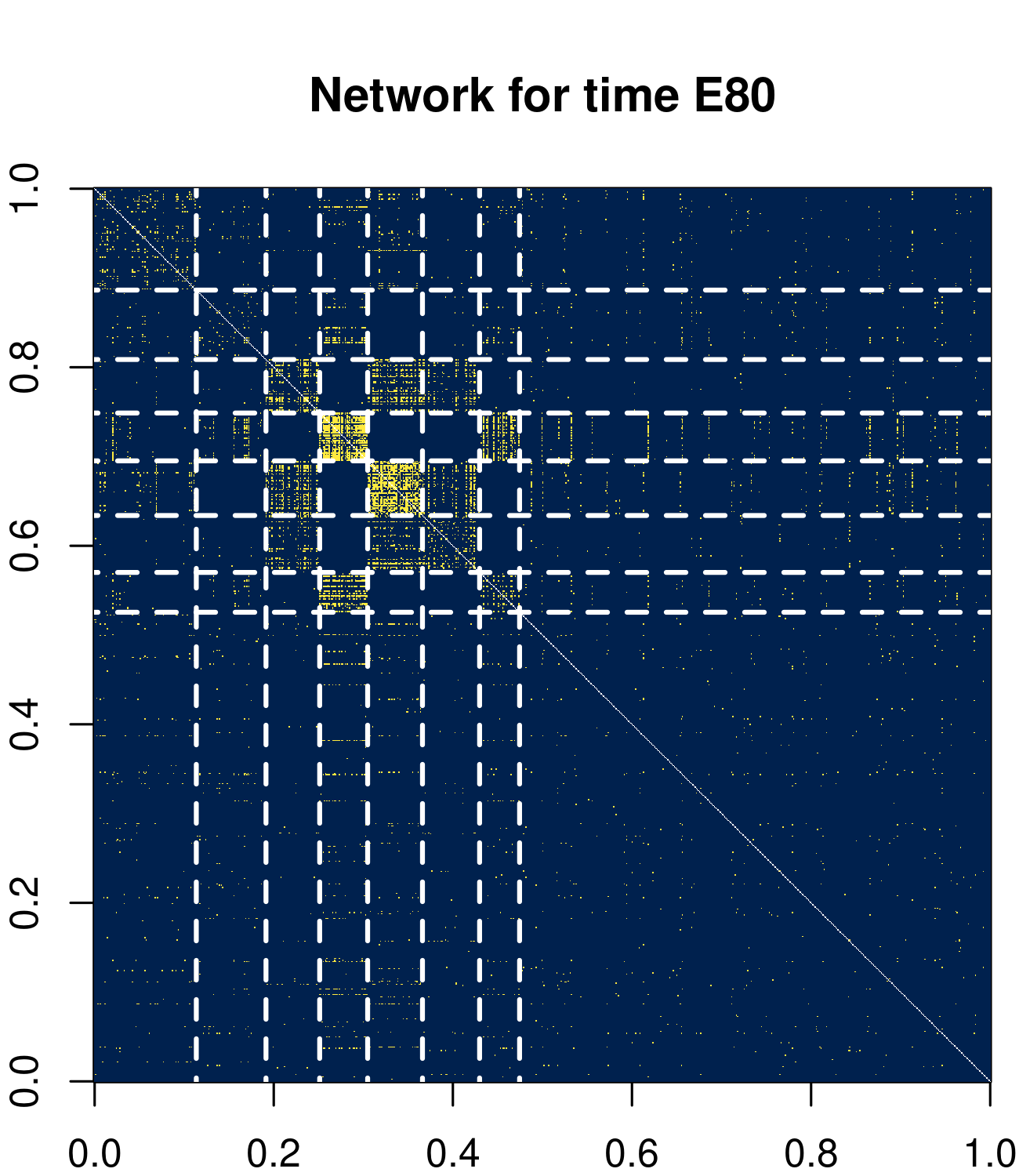}  
      \includegraphics[width=0.32\textwidth]{fig/networksosd/pnas_adj5}
         \includegraphics[width=0.32\textwidth]{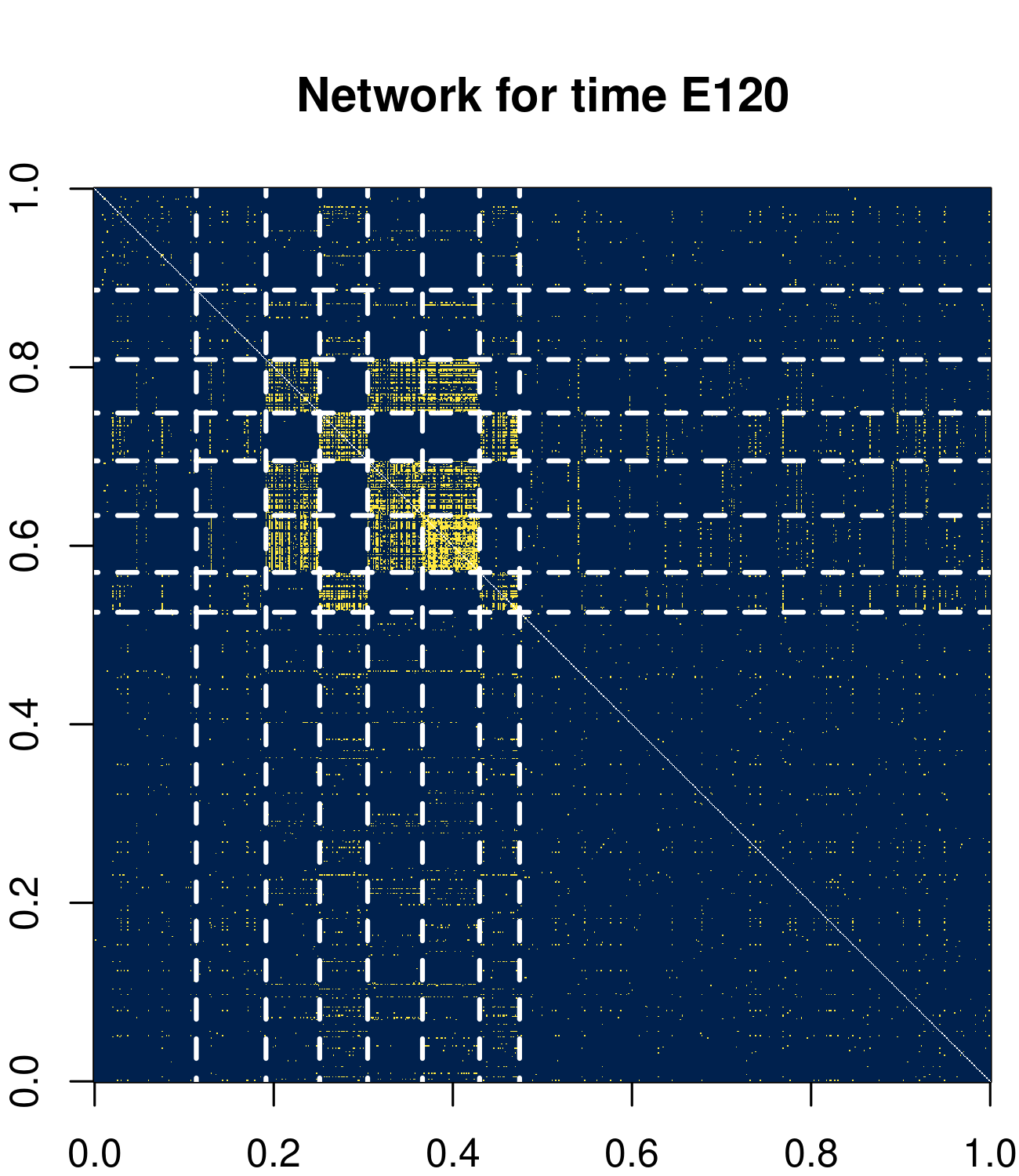}   \caption
   { \small Visualization similar to those in \Cref{fig:pnas_adj}, but for the adjacency matrices corresponding to the developmental times for 80, 90 and 120 days in the embryo (from left to right respectively).  }
    \label{fig:app:adj_4to6}
\end{figure}

\begin{figure}[H]
  \centering
   \includegraphics[width=0.32\textwidth]{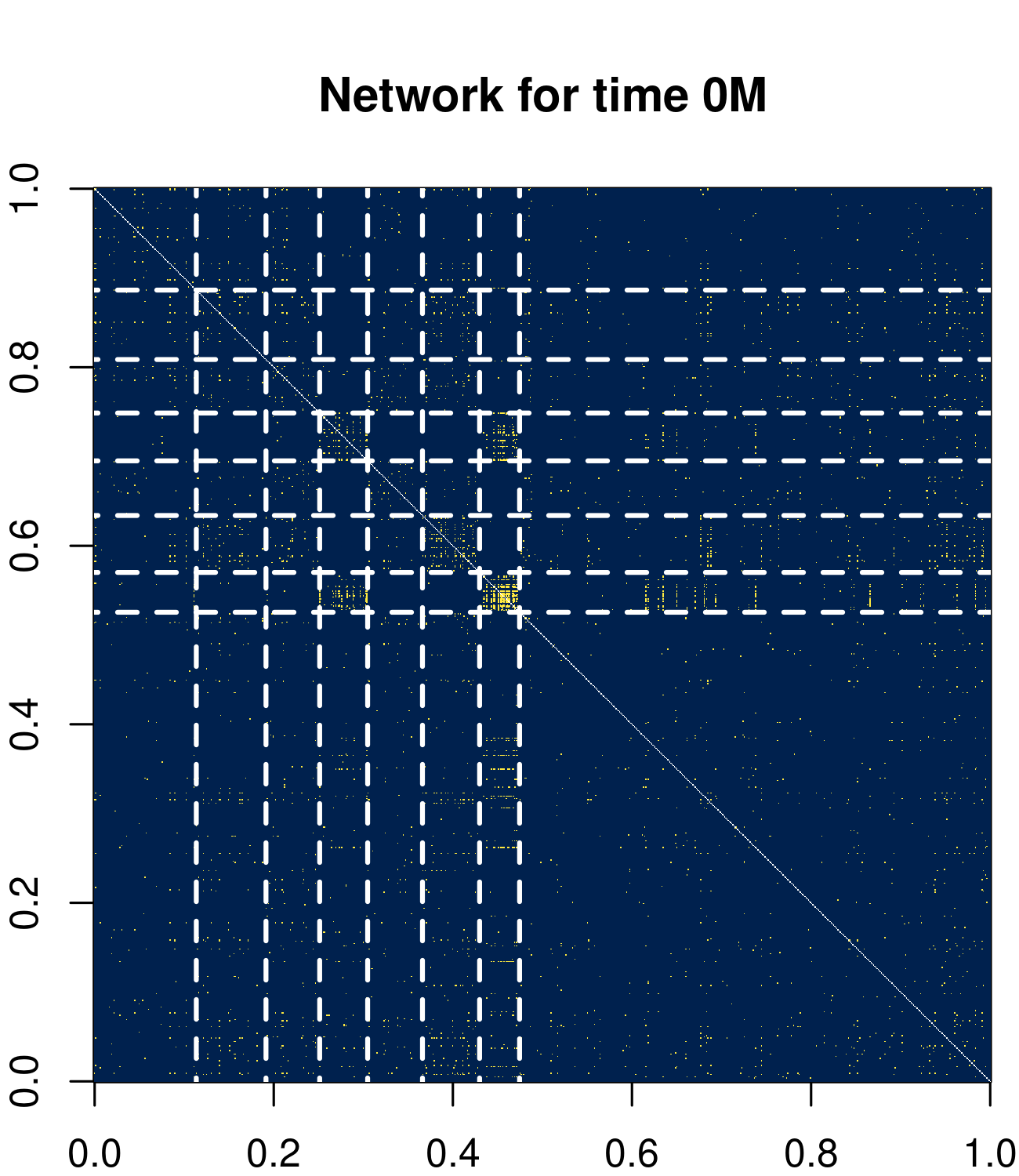}  
      \includegraphics[width=0.32\textwidth]{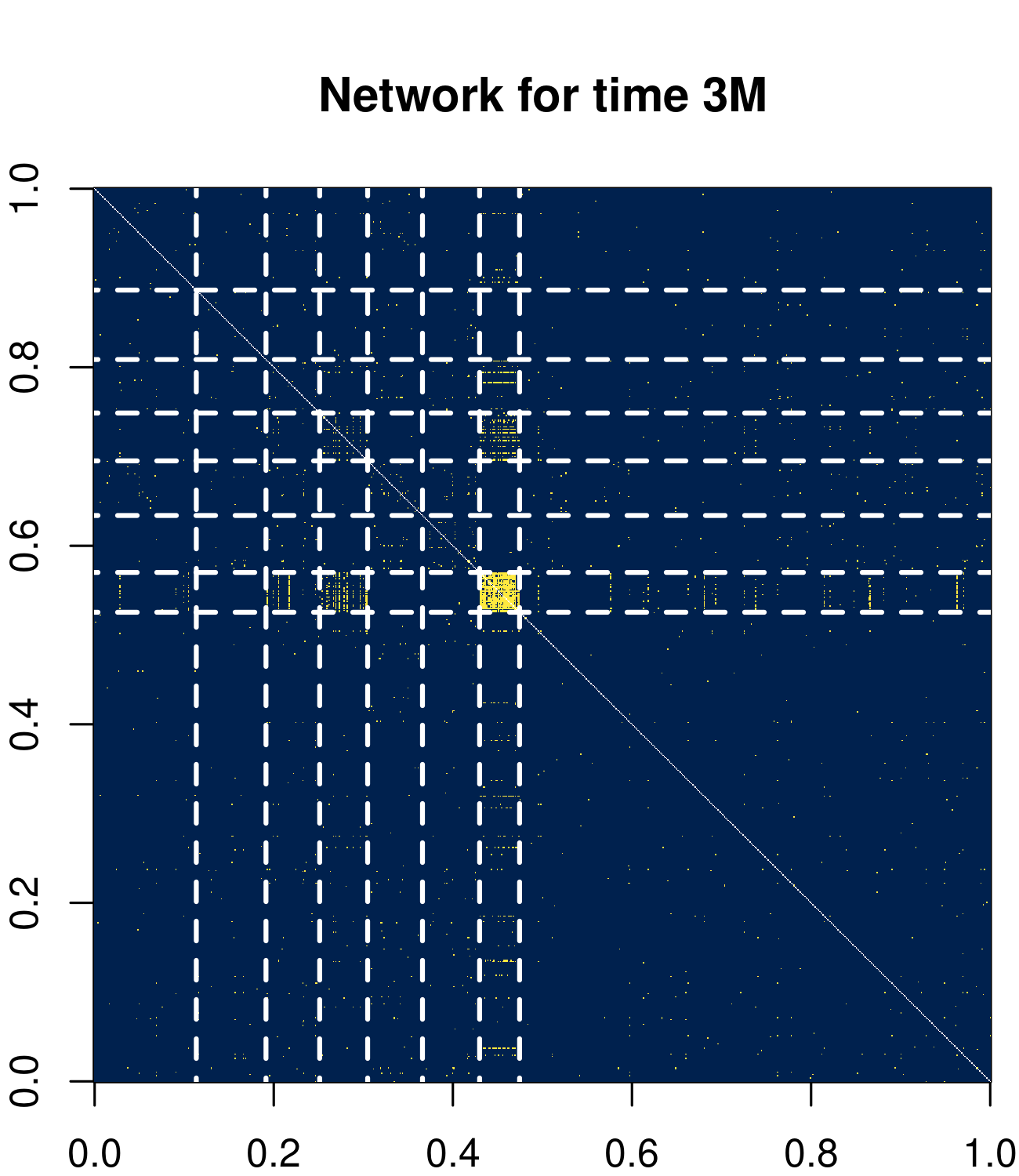}
         \includegraphics[width=0.32\textwidth]{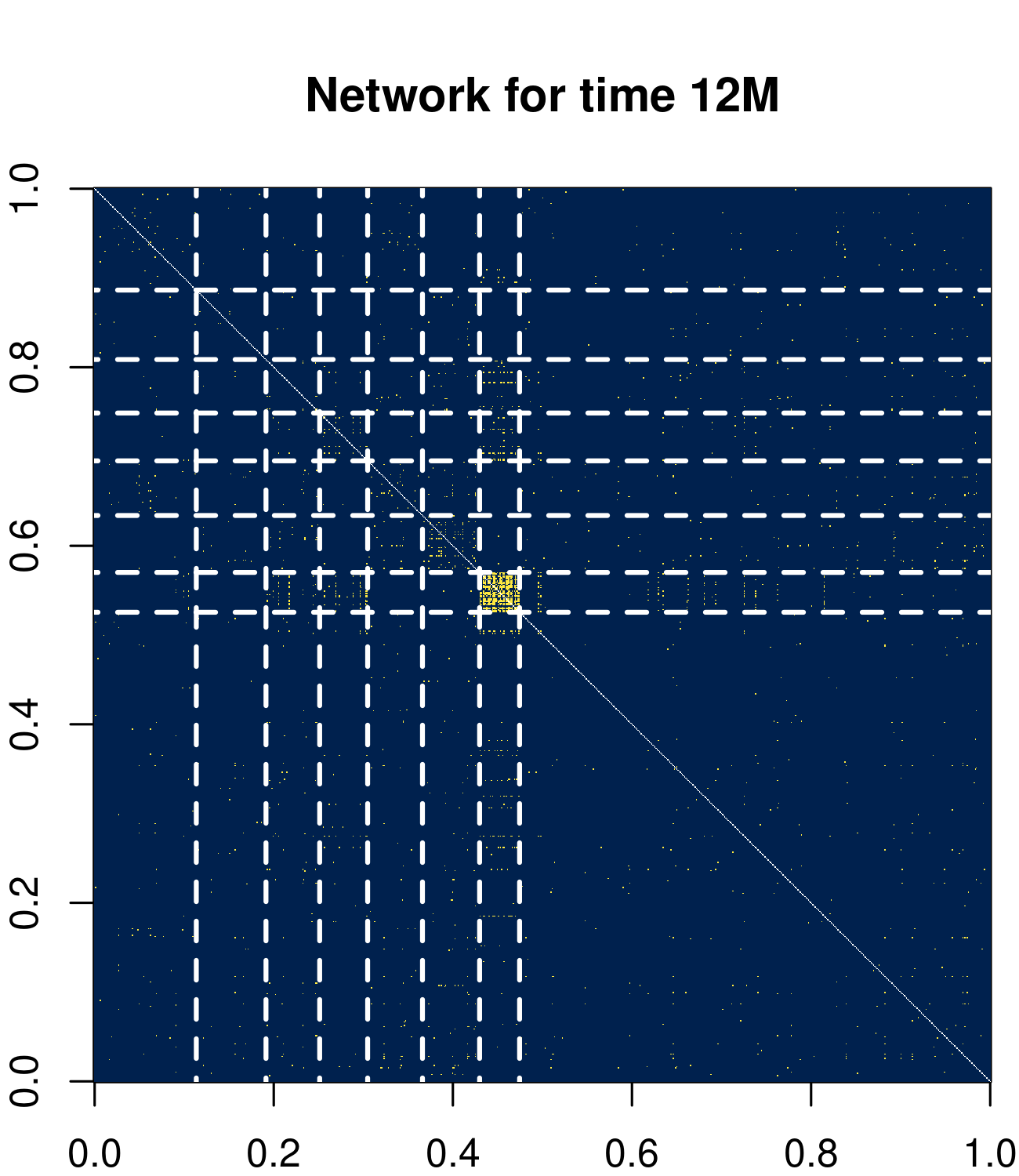}   \caption
   { \small Visualization similar to those in \Cref{fig:pnas_adj}, but for the adjacency matrices corresponding to the developmental times for 0, 3 and 12 months after birth (from left to right respectively).  }
    \label{fig:app:adj_7to9}
\end{figure}

\begin{figure}[H]
  \centering
   \includegraphics[width=0.32\textwidth]{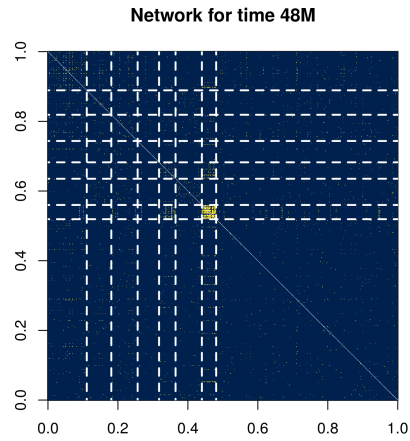}  
   \caption
   { \small Visualization similar to those in \Cref{fig:pnas_adj}, but for the adjacency matrix corresponding to the developmental times for 48 months after birth.  }
    \label{fig:app:adj_10}
\end{figure}

To better understand when each community has the highest connectivity in addition to \Cref{fig:app:adj_1to3} to \Cref{fig:app:adj_10}, we can also plot the within-community edge density for each of the eight communities across time, as shown in \Cref{fig:app:pnas_connectivity}. We see that the first seven communities are most dense at different stages of development, while the last community (the largest cluster) is always sparse
regardless of time. 
We do not plot the between-community edge densities in \Cref{fig:app:pnas_connectivity} 
since the average edge density between any community to all other communities is always less than 0.05 for any of the ten
networks.
%Additionally, to understand the last cluster (gray), we saw that over a quarter of the genes in this cluster (1003 out of 3954 genes) were housekeeping genes \citep{eisenberg2013human}, which are genes whose expression is related to the basic cellular functions and existence of the cell. This provides us some insight onto why the genes in the last cluster are so sparsely connected -- these genes are primarily expressed at a constant level over time, which hinder them from being strongly correlated with genes that are associated with specific biological systems related to brain development.

\begin{figure}[H]
  \centering
   \includegraphics[width=0.9\textwidth]{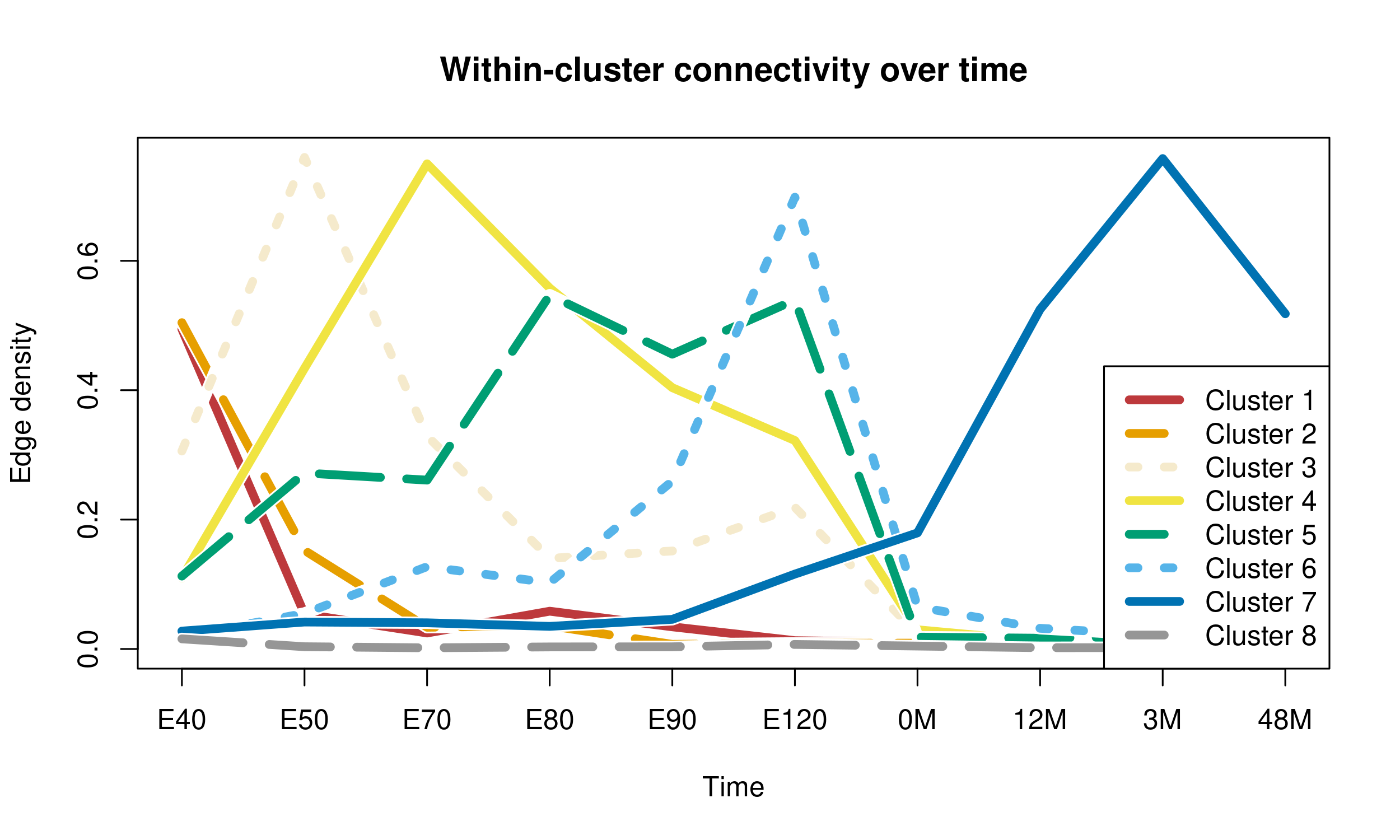}  
   \vspace{-0.5em}
   \caption
   { \small Edge density (calculated as number of observed edges divided by maximum number of possible edges) for each of the eight clusters across all time points. The clusters are colored by red, orange, pale, yellow, green, sky blue, dark blue, and gray in order from the first to last cluster, where the clusters are visualized by lines alternating between solid, dashed and dotted. }
    \label{fig:app:pnas_connectivity}
\end{figure}

\paragraph{Assessing the stability of the clustering with respect to tuning parameters.}
We assess the stability of our results by varying two important parameters
throughout our analysis, the correlation threshold needed when transforming the correlation matrix into an adjacency matrix and the number of clusters $K$.
We vary the correlation threshold among $\{0.68, 0.72, 0.77\}$ where a higher correlation threshold denotes fewer edges, and $K$ among $\{7,8,9\}$. 
We choose these particular correlation thresholds since the resulting networks still have desirable scale-free properties, and the total number of edges per network increases or decreases by 50\% on average when comparing the effect of changing the correlation
threshold from 0.72 to 0.68 or to 0.77 respectively.
Hence, we have a total of nine tuning-parameter-pairs, of which one yields the results shown in both \Cref{sec:data} and in \Cref{fig:app:adj_1to3} through \Cref{fig:app:adj_10} (i.e., the ``baseline communities'' corresponding to a correlation
threshold of $0.72$ and $K=8$).
Among the remaining eight tuning-parameter-pairs, we compare how each resulting communities of the genes compares to our baseline communities.

We focus on two of the eight tuning-parameter-pairs shown in \Cref{fig:app:pnas_sensitivity} which demonstrate communities that are most different from the baseline communities. Overall, these results show that our communities are stable across all tuning-parameter-pairs. On the left, we set the correlation threshold to be higher
than our baseline analysis in  \Cref{sec:data} (i.e., fewer edges) and set $K$ to be smaller.
On the right, we set the correlation threshold to be lower
than our baseline analysis in  \Cref{sec:data} (i.e., more edges) and set $K$ to be larger.
In both cases, we see an almost one-to-one mapping of baseline
communities to the new communities, where over 70\% of genes in each baseline community maps to a different new community. However, since we are comparing a baseline community with $K=8$ to either 
a community with $K=7$ or $K=9$, either one baseline community splits to multiple new communities, or multiple baseline communities merge to a particular new community. The results corresponding to the remaining six of eight tuning-parameter-pairs (not shown) are more stable than the ones shown in \Cref{fig:app:pnas_sensitivity} -- there, over 85\% of genes  in most baseline communities map to a different new community. 

\begin{figure}[H]
  \centering
   \includegraphics[width=0.48\textwidth]{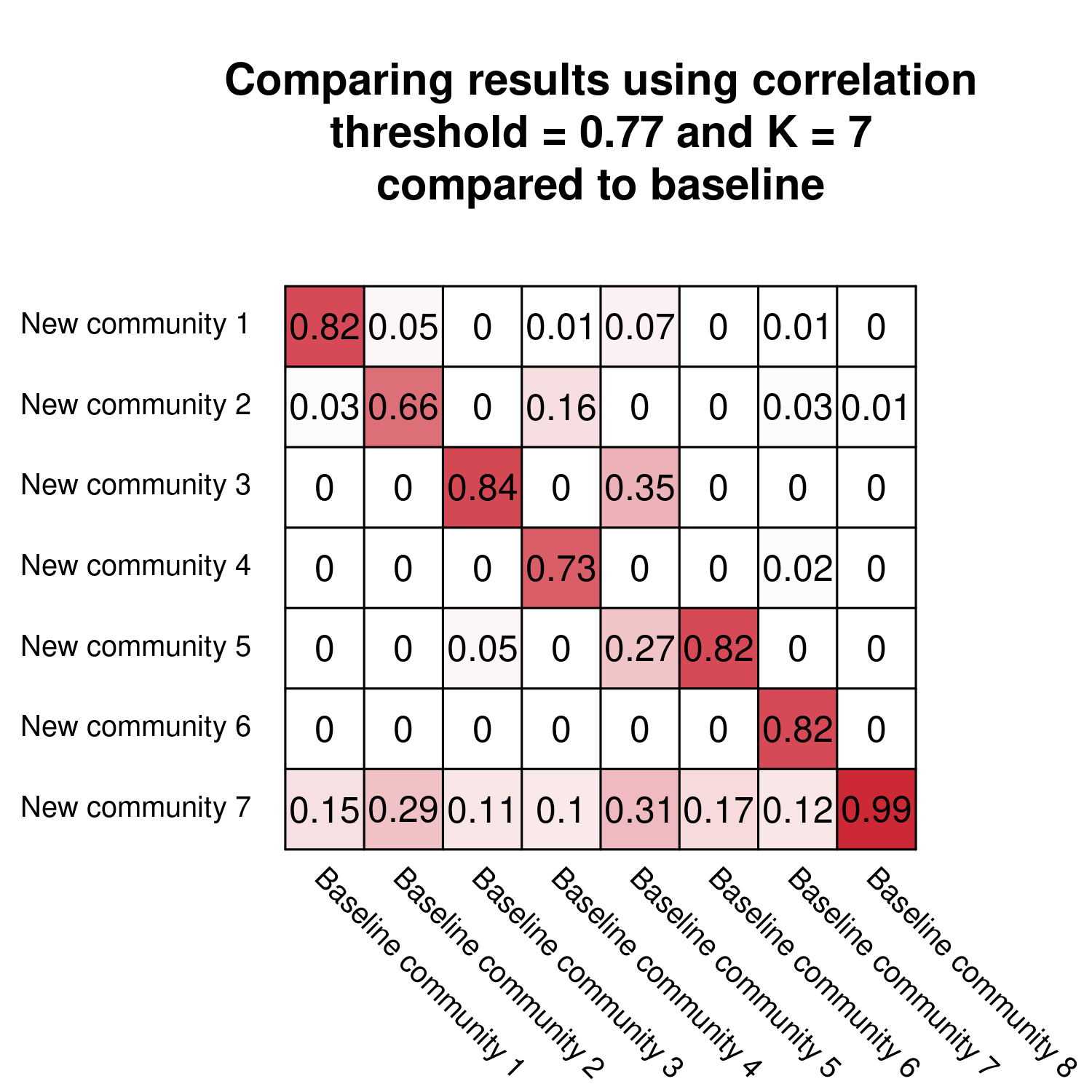}  
      \includegraphics[width=0.48\textwidth]{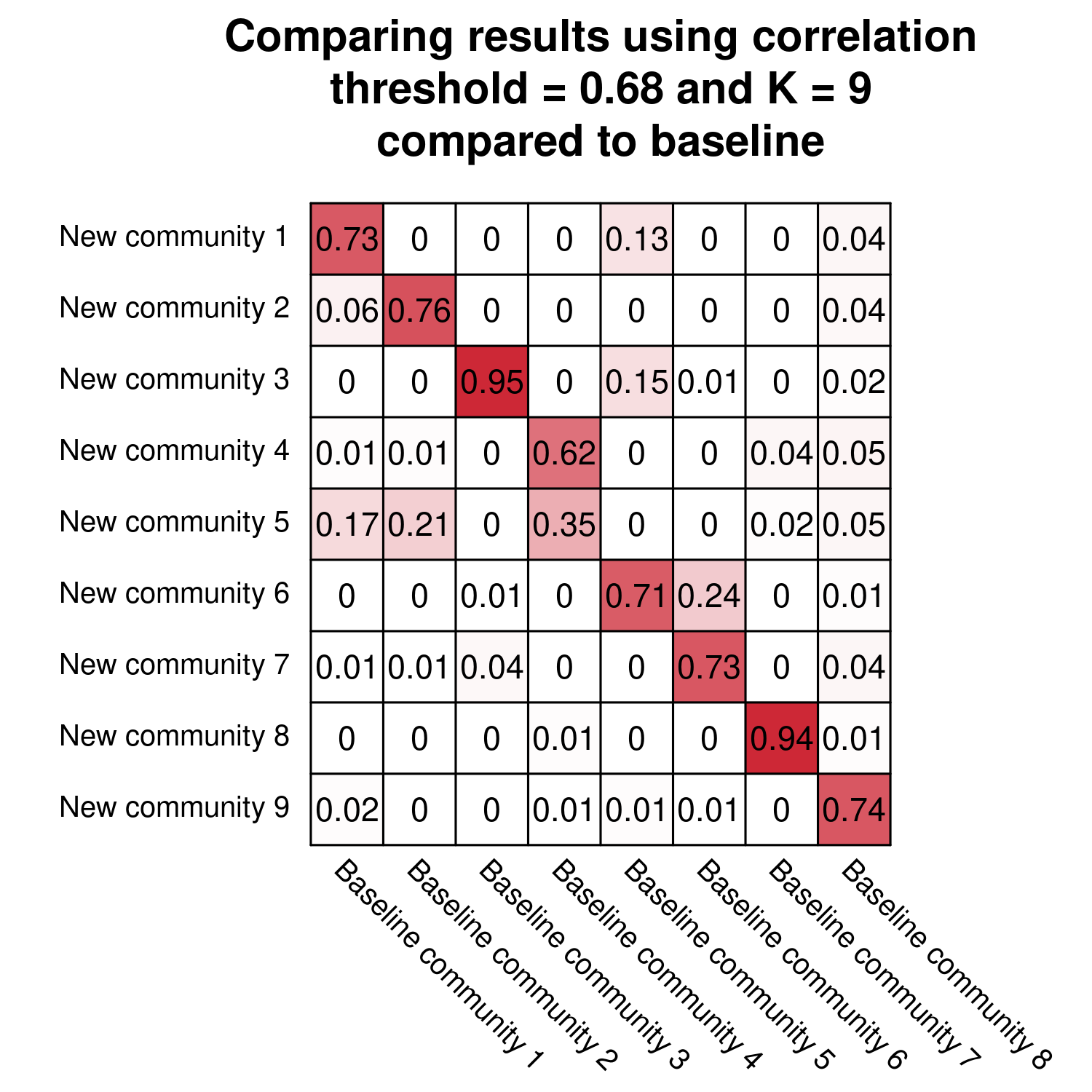}  \caption
   { \small Table displaying overlap between two sets of communities corresponding to different tuning-parameter-pairs and the baseline cluster. The value in each entry corresponds
   to the percentage of genes in a baseline community (i.e., column) belonging to a particular new community. The saturation of the red color corresponds to this value, where a higher saturation denotes values closer to 1. (Left): Comparison to results for a correlation threshold of 0.77 and $K=7$ (meaning there are fewer edges and communities). (Right): Comparison to results for a correlation threshold of 0.68 and $K=9$ (meaning there are more edges and communities).   }
    \label{fig:app:pnas_sensitivity}
\end{figure}

\end{document}